\documentclass[11pt]{amsart}

\usepackage{amsfonts,latexsym,amsthm,amssymb,amsmath,amscd,euscript}
\usepackage[alphabetic,msc-links,bibtex-style,nobysame]{amsrefs}
\makeatletter
\def\bibtex@style{amsrn}
\makeatother
\BibSpec{collection.article}{%
    +{}  {\PrintAuthors}                {author}
    +{.} { }                            {title}
    +{.} { }                            {part}
    +{:} { }                            {subtitle}
    +{.} { \PrintContributions}         {contribution}
    +{.} { \PrintConference}            {conference}
    +{.} { \PrintBook}                  {book}
    +{.} { In: }                         {booktitle}
    +{,} { pages~}                      {pages}
    +{.} { }                            {publisher}
    +{,} { }                            {address}
    +{,} { \PrintDateB}                 {date}
    +{.} { \PrintTranslation}           {translation}
    +{.} { Reprinted in \PrintReprint}  {reprint}
    +{.} { }                            {note}
    +{.} {}                             {transition}
}

\usepackage{stackengine}
\usepackage{framed}
\usepackage{xfrac}
\usepackage[makeroom]{cancel}
\usepackage{faktor}
\usepackage{braket}

\usepackage{pgf}
\usepackage{tikz}
\usepackage{tikz-cd}
\usepackage{pgfplots}
\pgfplotsset{compat=1.17}
\usepgfplotslibrary{fillbetween}

\usetikzlibrary{arrows,matrix,calc,positioning,decorations.pathreplacing,patterns,knots}

\usepackage{mathrsfs}

\definecolor{wrwrwr}{rgb}{0.3803921568627451,0.3803921568627451,0.3803921568627451}
\definecolor{rvwvcq}{rgb}{0.08235294117647059,0.396078431372549,0.7529411764705882}
\definecolor{mblue}{rgb}{0.2,0.3,0.8}
\definecolor{morange}{rgb}{1,0.5,0}
\definecolor{mgreen}{rgb}{0.1,0.4,0.2}
\definecolor{mred}{rgb}{0.5,0,0}
\definecolor{ForestGreen}{RGB}{34,139,34}

\usepackage{float}
\usepackage{lmodern}
\usepackage{supertabular}
\usepackage{verbatim}
\usepackage{enumerate}
\usepackage{stmaryrd}
\usepackage{bbm}
\usepackage{mathtools}
\usepackage[nopatch=footnote]{microtype}
\usepackage{setspace}
\usepackage{url}

\usepackage[hmargin=1in,vmargin=1in]{geometry}
\usepackage{graphicx}
\usepackage{subcaption}

\usepackage{hyperref}
\hypersetup{
    colorlinks=true,
    citecolor=ForestGreen,
    linkcolor=blue,
    urlcolor=black,
    linkbordercolor={1 0 0}
}

\newcommand{\la}{\langle}
\newcommand{\rg}{\rangle}

\newtheorem{theorem}{{Theorem}}[section]
\newtheorem*{theorem*}{Theorem}
\newtheorem{lemma}[theorem]{Lemma}
\newtheorem{proposition}[theorem]{Proposition}

\newtheorem{definition}[theorem]{Definition}
\newtheorem{corollary}[theorem]{Corollary}
\newtheorem*{corollary*}{Corollary}

\newcommand{\ve}{\varepsilon}
\newcommand{\mr}[1]{{\rm #1}}

\newcommand{\cE}{\mathcal{E}}
\newcommand{\cF}{\mathcal{F}}
\newcommand{\cG}{\mathcal{G}}

\newcommand{\cQ}{\mathcal{Q}}

\newcommand{\bR}{\mathbb{R}}
\newcommand{\bS}{\mathbb{S}}

\newcommand{\bZ}{\mathbb{Z}}

\newcommand{\fo}{\mathfrak{o}}

\newcommand{\fs}{\mathfrak{s}}

\newcommand{\nc}{\newcommand}

\nc{\on}{\operatorname}
\nc{\p}{\partial}
\nc{\ol}{\overline}
\nc{\ul}{\underline}
\nc{\pa}{\partial}

\nc{\pb}{\partial_b}
\nc{\pc}{\partial_c}
\nc{\pd}{\partial_d}
\nc{\pe}{\partial_e}
\nc{\pf}{\partial_f}
\nc{\pg}{\partial_g}
\nc{\ph}{\partial_h}
\nc{\pari}{\partial_i}
\nc{\pj}{\partial_j}
\nc{\pk}{\partial_k}
\nc{\pl}{\partial_l}
\nc{\pell}{\partial_\ell}
\nc{\parm}{\partial_m}
\nc{\pn}{\partial_n}
\nc{\po}{\partial_o}
\nc{\pp}{\partial_p}
\nc{\pq}{\partial_q}
\nc{\pr}{\partial_r}
\nc{\ps}{\partial_s}
\nc{\pt}{\partial_t}
\nc{\pu}{\partial_u}
\nc{\pv}{\partial_v}
\nc{\pw}{\partial_w}
\nc{\px}{\partial_x}
\nc{\py}{\partial_x}
\nc{\pz}{\partial_z}

\nc{\Spec}{\textup{Spec}}

\nc{\sn}{\mr{sn}}
\nc{\cn}{\mr{cn}}
\nc{\dn}{\mr{dn}}

\makeatletter

\def\@tocline#1#2#3#4#5#6#7{%
    \relax
    \ifnum #1>\c@tocdepth
    \else
        \par
        \addpenalty\@secpenalty
        \addvspace{#2}%
        \begingroup
            \hyphenpenalty\@M
            \@ifempty{#4}{%
                \@tempdima
                \csname r@tocindent\number#1\endcsname
                \relax
            }{%
                \@tempdima#4\relax
            }%
            \parindent\z@
            \leftskip#3\relax
            \advance\leftskip\@tempdima\relax
            \rightskip\@pnumwidth plus4em
            \parfillskip-\@pnumwidth
            #5%
            \leavevmode
            \hskip-\@tempdima
            #6%
            \nobreak
            \leaders\hbox to .65em{\hss.\hss}\hfill
            \nobreak
            \hbox to\@pnumwidth{\@tocpagenum{#7}}%
            \par
        \endgroup
    \fi
}

\renewcommand{\l@section}
    {\@tocline{1}{2pt}{0pt}{}{\bfseries}}

\renewcommand{\l@subsection}
    {\@tocline{2}{0pt}{1.5em}{3.2em}{}}

\renewcommand{\l@subsubsection}
    {\@tocline{3}{0pt}{4.7em}{3.8em}{}}

\makeatother

\title[Ricci flow on manifolds with positive isotropic curvature in all dimensions]{Ricci flow on manifolds with positive isotropic curvature in all dimensions}
\date{\today}

\author[Simon Brendle]{Simon Brendle}
\address{Department of Mathematics, Columbia University
\newline {\href{mailto:simon.brendle@columbia.edu}{simon.brendle@columbia.edu}}}
\author[Raphael Tsiamis]{Raphael Tsiamis}
\address{Department of Mathematics, Columbia University
\newline {\href{mailto:r.tsiamis@columbia.edu}{r.tsiamis@columbia.edu}}}

\begin{document}

\begin{abstract}
We extend the classification of manifolds with positive isotropic curvature to all dimensions $n \geq 5$. This result also completes the classification of uniformly positive isotropic curvature on complete non-compact $\kappa$-noncollapsed ancient solutions and gradient shrinking Ricci solitons.

The proof involves a construction of pinching cones in all dimensions, refining and extending to all dimensions the results of the first author. We have used \textsc{Mathematica} to find viable parameters for the cone families. The final presentation is self-contained and can be verified manually. No stage of this work involved \textsc{LLMs}.
\end{abstract}

\vspace*{0.2in}
\maketitle

%\vspace{-0.2in}

\tableofcontents

\section{Introduction}

We complete in all dimensions the classification of manifolds with positive isotropic
curvature established in~\cite{brendle-PIC}.
Positive isotropic curvature is a natural curvature condition for the Riemann curvature tensor in dimensions four and above, introduced by Micallef and Moore in their study of minimal two-spheres in Riemannian manifolds. 
Their work showed, in particular, that a compact simply connected manifold with positive isotropic curvature is homeomorphic to a sphere~\cite{micallef-moore}.
Variants of the PIC condition were instrumental in the proof of the Differentiable Sphere Theorem~\cite{brendle-schoen}.

Our main result is the following classification, valid in every dimension $n \geq 4$.

\begin{theorem}\label{thm:brendle-pinching}
Let $(M^n,g)$ be a compact Riemannian manifold with positive isotropic curvature.
If $M$ contains no nontrivial incompressible $(n-1)$-dimensional space forms, then $M$ is diffeomorphic to a finite connected sum of quotients of $\bS^n$ and $\bS^{n-1} \times \bR$ by standard isometries.
\end{theorem}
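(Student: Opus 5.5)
The plan is to follow the strategy of \cite{brendle-PIC}, which proves Theorem~\ref{thm:brendle-pinching} for $n \geq 12$. There, Ricci flow with surgery is run starting from a PIC metric, and the only dimension-dependent ingredient is a family of pinching cones. These are $O(n)$-invariant, convex, closed cones $\mathcal{C}(b)$ in the space of algebraic curvature tensors, depending on a parameter $b$. They must satisfy three requirements: each $\mathcal{C}(b)$ is preserved by the Hamilton ODE $\frac{d}{dt}R = Q(R)$; the cones interpolate between the PIC cone (for $b$ small) and cones that force curvature to become asymptotically PIC1 and PIC2 pinched; and the curvature tensors in each cone satisfy a \emph{uniform} PIC pinching together with a PIC2-type pinching towards the curvature operator of a cylinder $\bS^{n-1}\times\bR$ at large curvature scales. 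Everything after the construction of these cones is dimension-independent. That part consists of Hamilton's maximum principle for systems, the pinching estimate, the Hamilton–Ivey type estimate, the Cheeger–Gromov compactness, the classification of ancient $\kappa$-solutions with PIC2-type pinching, the neck detection and surgery procedure, and the topological conclusion. The proof therefore reduces to producing the cones for every $n \geq 4$. The case $n=4$ is Hamilton's work and $n \geq 12$ is \cite{brendle-PIC}, so the cones are needed for $5 \leq n \leq 11$, or uniformly for all $n\ge5$.

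\textbf{Step 1 (cone construction).} I would take cones of the form
\[\mathcal{C}(b)=\{R:\ \ell_{a,b}(R)\in \mathrm{PIC},\ \ell_{a,b}(R)-\theta(b)\,\mathrm{scal}\,\mathrm{id}\wedge\mathrm{id}\in\mathrm{PIC2}\text{-type},\ \mathrm{Ric}_0\text{ bounds}\}.\]
Here $\ell_{a,b}(R)=R+b\,\mathrm{Ric}\wedge\mathrm{id}+\tfrac{a}{n}\,\mathrm{scal}\,\mathrm{id}\wedge\mathrm{id}$ is the Böhm–Wilking transformation, and one adds finitely many additional linear or quadratic inequalities in the Ricci tensor and scalar curvature. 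The parameters $a(b)$, $\theta(b)$ and the auxiliary constants would be chosen as explicit functions of $n$, found numerically and then verified by hand.

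\textbf{Step 2 (ODE invariance).} For invariance I would verify, at a boundary point of each constraint, that $Q(R)$ (transformed by $\ell_{a,b}$) points inward. On the PIC face this uses the Brendle–Schoen null-vector inequality for $Q$ on isotropic 4-frames. One then shows that the extra terms coming from $b\,\mathrm{Ric}\wedge\mathrm{id}$, $a\,\mathrm{scal}$, and the Ricci constraints are dominated. This reduces to an algebraic inequality in a few scalar quantities: eigenvalues of $\mathrm{Ric}_0$ restricted to the 4-frame, $\mathrm{scal}$, and $|\mathrm{Ric}_0|^2$. It has to hold for all $b$ in the family and for the specific $n$.

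\textbf{Step 3 (coupling to the flow).} I would conclude as in \cite{brendle-PIC}: a compact PIC initial metric lies in $\mathcal{C}(b_0)$ for small $b_0$ after scaling, and these curvature conditions are preserved by the flow and by the surgery. The resulting pinching estimate gives the structure of high-curvature regions. Necks are detected via the canonical-neighborhood theorem for ancient solutions with uniformly PIC and weakly PIC2 curvature. The hypothesis on incompressible space forms rules out surgery caps that are quotient necks, exactly as in \cite{brendle-PIC}. The flow becomes extinct, and reversing the surgeries yields the connected-sum decomposition.

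\textbf{Main obstacle.} The hard part is Step 2 in low dimensions $5\le n\le 11$. The error terms from the Ricci modifications carry factors like $1/n$ and $(n-2)$. In \cite{brendle-PIC} these are absorbed only because $n$ is large, so for small $n$ the available margin shrinks. My first attempt would be to refine the Ricci constraints, using a sharper quadratic bound $|\mathrm{Ric}_0|^2\le\kappa(b)\,\mathrm{scal}^2$ and keeping the exact contributions rather than crude estimates. I would then search numerically, dimension by dimension, for piecewise-defined parameter curves $b\mapsto(a,\theta,\kappa)$ satisfying the finite list of polynomial inequalities. Finally, I would certify those inequalities by hand via monotonicity in $b$ and explicit checks at breakpoints.
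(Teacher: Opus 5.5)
Your top-level reduction is the one the paper uses: Theorem~\ref{thm:brendle-pinching} follows by feeding the cone family of Theorem~\ref{thm:technical-construction} into the dimension-independent machinery of \cite{brendle-PIC}. The genuine gap is that this family, which is the entire content of the proof, is never constructed. Steps 1--2 only describe a numerical search over an unspecified cone, and the cone you sketch would not work as written.

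\textbf{Problems with the sketched cone.} Suppose the PIC2-type constraint $\ell_{a,b}(R)-\theta(b)\,\textup{scal}\,\textup{id}\owedge\textup{id}\in C_{\textup{PIC}2}$ with $\theta(b)\geq 0$ is imposed for small $b$. Then it excludes part of the required initial set, because $\ell_{a,b}$ is close to the identity and a pinched PIC tensor with two-positive Ricci can have negative sectional curvature. For instance, in dimension five, $\textup{id}\owedge\textup{id}+2h\owedge\textup{id}$ with $h=\textup{diag}(-1,-1,\tfrac{2}{3},\tfrac{2}{3},\tfrac{2}{3})$ is strictly PIC, has Ricci eigenvalues $2,2,12,12,12$, and has $R_{1212}=-2$. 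The constraint is also unnecessary: the cones only need to pinch toward PIC1, as in Theorem~\ref{thm:technical-construction}~(ii), since the passage from PIC1 to PIC2 in \cite{brendle-PIC} is already dimension-free. Finally, imposing conditions on $\ell_{a,b}(R)$ puts $R$ in $\ell_{a,b}^{-1}(\cdot)=\ell_{a',b'}(\cdot)$ with $a',b'<0$, where the B\"ohm--Wilking correction has the wrong sign. The paper instead takes $R\in\ell_{a,b}(\cE)$, so that $S=\ell_{a,b}^{-1}(R)$ evolves by $\frac{d}{dt}S=\cQ(S)+D_{a,b}(S)$ with $D_{a,b}$ as in \eqref{eqn:D-a-b}.

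\textbf{The missing idea.} More fundamentally, your low-dimensional fix (a quadratic bound on $|\overset{\circ}{\textup{Ric}}|^2/\textup{scal}^2$, exact bookkeeping, numerically tuned parameter curves) does not address where the architecture of \cite{brendle-PIC} actually breaks. On the family interpolating toward PIC1, the lower bound for $\frac{d}{dt}\tilde{Z}_\lambda(S)$ at a face $\tilde{Z}_\lambda(S)=0$ is a concave quadratic in $y=\sqrt{2b}\,\frac{1+\lambda^2}{1-\lambda^2}$, and $y\to\infty$ as $\lambda\uparrow 1$. The paper notes that the conditions of \cite{brendle-PIC}*{\S 4} cannot be satisfied simultaneously in low dimensions. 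Its new idea is Proposition~\ref{prop:why-the-long-face}. Apply the PIC inequalities for $T=\ell_{a_1,b_1}^{-1}\ell_{a,b}(S)$ to the re-paired frames $(e_1,e_3,e_2,e_4)$ and $(e_1,e_4,e_2,-e_3)$, together with bounds on $\textup{tr}_F\textup{Ric}(T)$. This shows such faces are never reached when $\lambda>(\sqrt{1+\eta_n}-\sqrt{\eta_n})^2$. That confines $y$ to $[\sqrt{2b},\sqrt{(n+4)b}]$, where Lemma~\ref{lemma:the-quadratic-family-2} gives positivity; the extra term $B_n b\,\textup{scal}(S)$ in $\tilde{Z}_\lambda$ supplies the needed slack.

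\textbf{Other missing structure.} Your plan also lacks the rest of the construction:
\begin{enumerate}[(i)]
\item the two families $C_1(b)$ and $C_2(b)$;
\item the barrier of Definition~\ref{def:first-cone-family}~(ii), which bounds $\textup{Ric}(S)_{22}-\textup{Ric}(S)_{11}$ by the non-negative part $N$ of a splitting $S=U+N$ and is analyzed through the joint variables $(\xi,s,\tau)$;
\item the endpoint condition $C_1(b_1)=C_2(b_2)$, which for $n=5$ needs that quadratic barrier beyond weighted PIC inequalities (Proposition~\ref{prop:joining-lower-bound}).
\end{enumerate}
Without these ingredients there is no reason to expect your search to find admissible parameters.
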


The topology of manifolds with positive isotropic curvature has also been studied from the viewpoint of their fundamental groups. 
Fraser proved that the fundamental group of a compact PIC manifold of dimension at least five contains no subgroup isomorphic to $\bZ \oplus \bZ$~\cite{fraser}.
These results form part of a broader picture suggested by conjectures of Gromov and Schoen: the fundamental group of a closed PIC manifold should be virtually free, and, more strongly, a finite cover should be diffeomorphic to a sphere or to a connected sum of copies of $\bS^n$ and $\bS^{n-1} \times \bS^1$~\cites{gromov}.

The Ricci flow approach to this problem originates in the work of Hamilton, who developed a surgery program for four-dimensional manifolds with positive isotropic curvature, exploiting the special structure of the curvature operator to obtain a pinching estimate toward positive curvature operator~\cite{hamilton-four-manifolds}.
The four-dimensional problem was subsequently completed to a full classification by Chen, Tang, and Zhu~\cites{chen-zhu-jdg , chen-tang-zhu}.
In dimension three, Hamilton's program was transformed by Perelman's noncollapsing theorem, canonical neighborhood theory, and Ricci flow with surgery~\cites{hamilton,ivey,perelman-entropy,perelman-surgery}.

A central problem in Ricci flow for higher-dimensional manifolds with positive isotropic curvature is to establish suitable pinching properties of the Riemann curvature tensor.
The first author established a pinching estimate which forces high-curvature regions toward the PIC2 cone, and may be viewed as a higher-dimensional counterpart of the Hamilton-Ivey phenomenon~\cite{brendle-PIC}.
Once this estimate is available, the analysis of ancient solutions, the Canonical Neighborhood Theorem, and the surgery construction follow the higher-dimensional version of Perelman's program.
The main restriction in the original argument of~\cite{brendle-PIC} was dimensional, due to the construction of suitable cones of operators in dimension $n \geq 12$.
Later, Chen~\cite{pic-dimension-nine} showed that this construction extends to dimensions $n \geq 9$.
Very recently, Chow and Wang established the sharp lower bound $(n-3)$ for the index of harmonic $2$-spheres in $n$-manifolds with PIC, thereby obtaining an alternative proof of Theorem~\ref{thm:brendle-pinching} in dimensions five and six~\cite{chow-wang-PIC}.
Theorem~\ref{thm:brendle-pinching} completes the classification in every dimension, using the techniques of pinching cones for positive isotropic curvature of~\cite{brendle-PIC}.

The pinching estimate used in Theorem~\ref{thm:brendle-pinching} has consequences beyond the compact topological classification. The work of the first author and Naff classifies noncompact ancient $\kappa$-solutions which are uniformly PIC and weakly PIC2: they are shrinking round cylinders, quotients thereof, or the Bryant soliton~\cite{brendle-naff}. Combining that theorem with the pinching estimate obtained here yields the following consequences in every dimension $n\geq4$.
\begin{corollary}\label{cor:brendle-naff}
The following statements hold in every dimension $n\geq 4$.
\begin{enumerate}[(i)]
    \item Every complete non-compact $\kappa$-noncollapsed ancient solution of Ricci flow with uniformly positive isotropic curvature is weakly PIC2 and has bounded curvature. Consequently, it is a shrinking round cylinder, a quotient of a shrinking round cylinder, or the Bryant soliton.
    \item Every non-flat complete gradient shrinking Ricci soliton with uniformly positive isotropic curvature is a quotient of the round sphere $\bS^n$ or the round cylinder $\bS^{n-1} \times \bR$.
\end{enumerate}
\end{corollary}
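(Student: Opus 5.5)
The plan is to reduce both statements to the classification of Brendle--Naff~\cite{brendle-naff}: noncompact ancient $\kappa$-solutions that are uniformly PIC and weakly PIC2 are shrinking round cylinders, their quotients, or the Bryant soliton. The missing input is the pinching estimate of this paper. It will provide, in every dimension $n\ge 4$, a family of $O(n)$-invariant convex cones $\mathcal{C}(b)$ that are preserved by the Hamilton ODE $\frac{d}{dt}R=Q(R)$. Uniform PIC should lie in the initial cone, and the cones should pinch toward the PIC2 cone as the scalar curvature tends to infinity. Concretely, for a solution with uniformly PIC curvature (say $R-\theta\,\mathrm{scal}\,\mathrm{id}\owedge\mathrm{id}\in\mathrm{PIC}$), I expect an estimate of the form
\[
R+f(\mathrm{scal})\,\mathrm{id}\owedge\mathrm{id}\in\mathrm{PIC2},
\]
with $f(s)/s\to 0$ as $s\to\infty$. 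The constants depend only on $n$ and $\theta$, and the estimate holds for every solution on a time interval of length $T$, up to an error that vanishes as $T\to\infty$.

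For (i), let $(M,g(t))$, $t\in(-\infty,0]$, be a complete noncompact $\kappa$-noncollapsed ancient solution with uniformly PIC curvature. The first step is bounded curvature. I would use the standard argument: the uniform PIC condition gives a Harnack-type/PIC1 structure, and then Perelman-style blow-up arguments together with $\kappa$-noncollapsing show that the curvature is bounded on each time slice, as in the higher-dimensional theory of~\cite{brendle-PIC}.

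The second step gives weak PIC2. This requires applying the pinching estimate on a noncompact manifold. I would apply the cone-preservation maximum principle on $[-T,0]$ via a localized version: either Shi-type estimates with a cutoff function, or the noncompact maximum principle available once curvature is bounded. This yields $R+f_T\,\mathrm{id}\owedge\mathrm{id}\in\mathrm{PIC2}$, where $f_T\to0$ relative to the curvature scale as $T\to\infty$. Since the solution is ancient, letting $T\to\infty$ shows that $R(t)$ is weakly PIC2. Then~\cite{brendle-naff} gives the classification.

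For (ii), let $(M,g,f)$ be a non-flat complete gradient shrinking soliton with uniformly PIC curvature. It generates an ancient solution that is $\kappa$-noncollapsed, since shrinkers have a lower bound on Perelman's entropy. Part of the job is to show bounded curvature, which I would again obtain by the blow-up argument. The previous paragraph then gives weak PIC2, i.e.\ nonnegative complex sectional curvature. Shrinking solitons with weakly PIC2 and uniformly PIC curvature are classified by the strong maximum principle and holonomy arguments, as in Brendle's work, together with Munteanu--Wang-type results. If the soliton is compact, it is a quotient of the round sphere. If it is noncompact, it splits, and uniform PIC forces it to be a quotient of $\mathbb{S}^{n-1}\times\mathbb{R}$; the Bryant soliton is excluded because it is steady, not shrinking.

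I expect the main obstacle to be the second step, establishing weak PIC2 without a priori compactness. The pinching cones must be shown to be preserved under the PDE on a complete noncompact solution. This needs a localization or bounded-curvature input, together with the observation that the estimate's constants depend only on $n$ and the uniform PIC constant, so that the estimate survives the limit $T\to\infty$. The rest consists of citations.
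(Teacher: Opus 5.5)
The reduction to \cite{brendle-naff} via the new pinching estimate is also the paper's route; the paper offers nothing beyond this combination. However, your first step (bounded curvature) is a genuine gap, and both your weak-PIC2 step and your treatment of (ii) rest on it.

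\textbf{Bounded curvature.} Uniform PIC gives neither PIC1 nor any Harnack inequality. It does not even make the Ricci tensor two-positive, which is exactly why Theorem~\ref{thm:technical-construction}~(i) needs the shift $N$. The Harnack inequalities of Hamilton and Brendle require nonnegative curvature operator, respectively weak PIC2, together with bounded curvature, and that is what you are trying to prove. Nor does \cite{brendle-PIC} contain a bounded-curvature theorem for general uniformly PIC ancient solutions: there, ancient $\kappa$-solutions are \emph{assumed} to have bounded curvature and to be weakly PIC2. So a Perelman-type blow-up at points of unbounded curvature has nothing that identifies or constrains its limits unless the pinching has already been established for them.

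As written, the argument is therefore circular. Your Step~2 invokes the noncompact maximum principle, which needs bounded curvature, while your only route to bounded curvature presupposes the structure Step~2 is supposed to produce. The logic has to be reversed, in one of two ways:
\begin{enumerate}[(a)]
\item establish the PIC2 pinching for the ancient solution in a form that does not presuppose bounded curvature, e.g.\ a localized maximum principle for the cone family in the spirit of B.-L.~Chen's localization of the Hamilton--Ivey estimate; or
\item apply the pinching only to the bounded-curvature limits produced by space-time point picking and $\kappa$-noncollapsing, identify those limits via \cite{brendle-naff}, and only then rule out unbounded curvature.
\end{enumerate}
In either case bounded curvature is an output of the pinching, not an input. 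The same correction is needed in (ii).

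\textbf{Further points.}
\begin{enumerate}[(1)]
\item Even with bounded curvature, the limit $T\to\infty$ is not supplied by Theorem~\ref{thm:technical-construction} as stated. Its cones are static and pinch only toward PIC1; the step from PIC1 to PIC2 comes from \cite{brendle-PIC}. The pinching set they generate is fixed by the initial shift $N\sim\sup|\textup{Rm}|$. So for a solution of bounded curvature it yields no improvement at bounded scalar curvature, however long the time interval. You need a time-dependent version whose shift decays like $1/T$, the analogue of the factor $\log(1+t)$ in Hamilton--Ivey, built from the uniform transversality of the family.
\item The cones constructed here exist only for $n\geq 5$. The case $n=4$ rests on Hamilton's pinching estimate \cite{hamilton-four-manifolds}.
\item Once (i) is in hand, the noncompact case of (ii) follows directly. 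Apply (i) to the $\kappa$-noncollapsed ancient flow generated by the shrinker; the Bryant soliton is excluded, as you note. The pinching-plus-rigidity argument is then needed only for compact shrinkers.
\end{enumerate}
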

The second statement extends the works of~\cites{li-ni-wang , naff} to all dimensions; see~\cites{cho-li,chen-shrinking} for related work.
The compact ancient theory around this problem has also been developed substantially by work of the first author with Daskalopoulos, Naff, and \v{S}e\v{s}um, who classified compact ancient $\kappa$-solutions on $\bS^n$, showing that they are either shrinking round spheres or the higher-dimensional Type II ancient solution constructed by Perelman~\cite{brendle-daskalopoulos-naff-sesum}.

We note that Huang has proposed an extension of Ricci flow with surgery for compact PIC orbifolds with isolated singularities, intended to remove the hypothesis of Theorem~\ref{thm:brendle-pinching} regarding incompressible space forms~\cite{ huang-classification-2023}.

\subsection{Strategy of the proof}

We follow the construction of pinching cones in~\cite{brendle-PIC}.
Our main technical contribution is the following result.

\begin{theorem}\label{thm:technical-construction}
For $n \geq 5$, let $S^2_B(\fs \fo(n))$ denote the space of self-adjoint endomorphisms of $\fs \fo(n)$ satisfying the first Bianchi identity.
There exist positive constants $B, \theta_*$ and a continuous family of closed, convex, $O(n)$-invariant cones $\{ \hat{C}(b) \}_{ b \in (0,B) } \subset S^2_B( \fs \fo(n))$ such that each $\hat{C}(b)$ is transversally invariant under Hamilton's ODE $\frac{d}{dt} R= \cQ(R)$, and the following properties hold.
\begin{enumerate}[(i)]
    \item For every $\theta \in (0, \theta_*]$, there exists a $\beta_0 = \beta_0(\theta) \in (0,B)$ such that, for every $N>0$,
    \begin{align*}
        &\{ R : R - \theta \, \textup{scal}(R) \, \textup{id} \owedge \textup{id} \in C_{\textup{PIC}} \} \cap \{ R : \textup{Ric}(R)_{11} + \textup{Ric}(R)_{22} - \theta \, \textup{scal}(R) + N \geq 0 \} \\
        & \quad\subset \{R : R + N \, \textup{id} \owedge \textup{id} \in \hat{C}(\beta_0)\}.
    \end{align*}
    The second condition is understood for every orthonormal two-frame $\{e_1, e_2 \}$.
    \item For any sequence $\beta_j \nearrow B$, there exists a sequence of positive real numbers $\ve_j \searrow 0$ such that
    \[
    \hat{C}(\beta_j) \subset \{ R : R + \ve_j \,  \textup{scal}(R) \, \textup{id} \owedge \textup{id} \in C_{\textup{PIC}1} \} \qquad \text{for every } \; j.
    \]
\end{enumerate}
\end{theorem}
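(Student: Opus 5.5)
We follow the construction of pinching cones in \cite{brendle-PIC}. The cones are defined by conditions on a modified curvature tensor. For parameters $a,b>0$, we set
\[
\ell_{a,b}(R) = R + b\,\textup{Ric}_0(R)\owedge \textup{id} + \tfrac{1}{n}\bigl(2a - (n-2)b\bigr)\textup{scal}(R)\,\textup{id}\owedge\textup{id},
\]
which has the same structure as the Böhm--Wilking transformation. The cone $\hat C(b)$ consists of those $R$ for which $\ell_{a,b}(R)$ is PIC, the two-frame Ricci-type quantity $\textup{Ric}_{11}+\textup{Ric}_{22}-\theta\,\textup{scal}$ (together with a lower bound on $\textup{scal}$) is nonnegative, and a further PIC1-type condition with weight $h(b)$ holds. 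The functions $a(b)$, $h(b)$, $\theta(b)$ are to be chosen. The family splits into two regimes. In the first regime, with $b\in(0,b_0]$, $\hat C(b)$ interpolates from the initial region in (i) toward a cone contained in PIC1-type conditions. In the second regime, with $b\in[b_0,B)$, we deform from the PIC1-type cone toward PIC2. This is done either by following \cite{brendle-PIC}, Section 3, or by invoking the Böhm--Wilking family, which is invariant under the Ricci ODE in every dimension and converges to the PIC2 cone. Closedness, convexity and $O(n)$-invariance are immediate, since every condition is a linear pullback of a convex invariant cone or a convex family of inequalities.

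The heart of the proof is transversal invariance under $\frac{d}{dt}R=\cQ(R)$. Following Hamilton's maximum principle, we fix $R$ on the boundary of $\hat C(b)$ and a null vector (a PIC four-frame or a two-frame) where the defining quantity vanishes. We then show that the derivative of that quantity along $\cQ(R)$ is strictly positive. Using the identity $\ell^{-1}\cQ(\ell(R)) = \cQ(R) + D_{a,b}(R)$ from \cite{brendle-PIC}, we reduce this to the PIC-preservation estimate for $\cQ^{\#}$ of Nguyen and Brendle--Schoen, plus an error term $D_{a,b}(R)$. That error is quadratic in $\textup{Ric}_0$ and must be dominated by the positive terms coming from the other defining inequalities. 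We write $R$ at the null frame in block form, as in the proof of Proposition 3.x of \cite{brendle-PIC}. We bound $|\textup{Ric}_0|$ and the off-diagonal curvature components using the remaining inequalities, namely PIC of $\ell(R)$ and the two-frame Ricci bound. This yields a finite set of polynomial inequalities in $(a,b,h,\theta,n)$ that suffice for invariance. In dimensions $n\ge 12$ these follow from the crude estimates of \cite{brendle-PIC}. For $5\le n\le 11$ we must sharpen them. We do this by keeping the Ricci components in directions $e_1,\dots,e_4$ separate and using the two-frame condition in all pairs, rather than only the trace. We then choose parameter curves, for instance $a(b)$ affine in $b$, $\theta$ fixed small and $h$ tuned, along which the resulting inequalities hold strictly. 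As in the abstract, viable values can be located numerically and then checked by hand. We also need $\frac{d}{db}$-monotonicity of the family, $\hat C(b_2)\subset \hat C(b_1)$ for $b_1<b_2$ in the relevant sense, so that the cones can be combined in the pinching argument.

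Properties (i) and (ii) are then direct. For (i), take $b=\beta_0$ small, with $a,h$ close to their values at $b=0$. If $R-\theta\,\textup{scal}\,\textup{id}\owedge\textup{id}$ is PIC, then adding $N\,\textup{id}\owedge\textup{id}$ and the small multiples of $\textup{Ric}_0\owedge\textup{id}$ in $\ell_{a,b}$ keeps us in PIC. This holds because $|\textup{Ric}_0|\lesssim \textup{scal}+N$ under the two-frame Ricci lower bound, and the margin $\theta\,\textup{scal}$ absorbs terms of size $O(b)\,\textup{scal}$. This fixes $\beta_0(\theta)$ independently of $N$. For (ii), as $b\nearrow B$ the parameters are chosen so that $\ell_{a,b}$ degenerates toward the PIC1-forcing limit, which gives $\ve_j\to0$ by a compactness argument on the unit sphere $\{\textup{scal}=1\}$. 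We expect the main obstacle to be the transversal invariance in low dimensions $n=5,\dots,8$. There, the error $D_{a,b}$ is relatively large compared with the $\cQ^{\#}$ gain, so the sharpened, componentwise estimates must be used and the parameter window is narrow.
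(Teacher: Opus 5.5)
There is a genuine gap. Your outline reproduces the general framework of \cite{brendle-PIC}: conjugation by $\ell_{a,b}$, the error term $D_{a,b}$, strict inward motion on each boundary face, and a parameter search. But it does not prove the statement. For $5\le n\le 11$, the whole content of the theorem is that cones and parameters exist for which every face inequality holds. You neither specify the cones nor verify any of these inequalities, and choosing parameter curves ``along which the resulting inequalities hold strictly'' is exactly the claim to be established.

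The cone you describe also lacks the ingredient that makes the two-Ricci face invariant. On $\{\textup{Ric}(S)_{11}+\textup{Ric}(S)_{22}+\frac{4}{5n}b\,\textup{scal}(S)=0\}$, the term $-4b\,\textup{Ric}(S)^2$ in \eqref{eqn:evolve-ric-S} contributes, after dividing by $8\,\textup{scal}(S)^2$, a term $-b\,x^2$ with $x=(\lambda_2-\lambda_1)/(2\,\textup{scal}(S))$. The remaining positive terms do not dominate it. The paper controls it only through condition (ii) of Definition~\ref{def:first-cone-family}. That condition is a splitting $S=U+N$ with $U\in C_{\textup{PIC}}$ and $N\ge 0$, together with a quadratic pinching of $\textup{Ric}(S)_{22}-\textup{Ric}(S)_{11}$ by $\sum_p(N_{1p1p}+N_{2p2p})$, and it yields $h\ge 4(n-2)b\,x^2$. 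This new barrier must itself be shown to move inward, which is Proposition~\ref{prop:evolution-second-barrier} and is where the function $\cF(\xi,s,\tau)$ enters. Your proposed sharpening, keeping $\textup{Ric}_{ii}$ for $i\le 4$ separate, does not substitute for it.

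You also misplace the low-dimensional obstruction, and sharper estimates at the face do not remove it. Pinching toward PIC1 comes from the second family $\cE_2(b)$ of Definition~\ref{def:a=b}, defined by $\tilde Z_\lambda(S)\ge 0$. This family interpolates between $C_1(b_1)$ at $b=b_2$ and $C_1(b_1)\cap C_{\textup{PIC}1}$ as $b\to 0$, and that limit is what gives (ii) as $\beta_j\nearrow B$. On a face $\tilde Z_\lambda(S)=0$, the lower bound for $\frac{d}{dt}\tilde Z_\lambda(S)$ contains $-\frac{(2B_n+1)^2}{2B_nP}\frac{n-4}{2(n-2)}\,y^2$, where $y=\sqrt{2b}\,\frac{1+\lambda^2}{1-\lambda^2}$. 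This term tends to $-\infty$ as $\lambda\uparrow 1$.

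The decisive step is Proposition~\ref{prop:why-the-long-face}. It uses PIC inequalities for $T=\ell_{a_1,b_1}^{-1}\ell_{a,b}(S)$ to show that such faces are never reached, namely $\lambda\le(\sqrt{1+\eta_n}-\sqrt{\eta_n})^2$. This gives $y\le\sqrt{(n+4)b}$, so Lemma~\ref{lemma:the-quadratic-family-2} applies.

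You also never address the joining condition $C_1(b_1)=C_2(b_2)$ (Proposition~\ref{prop:joining-conditions}). For small $n$ it requires carefully weighted combinations of PIC inequalities. For $n=5$ it also uses the quadratic Ricci pinching from the first family (Proposition~\ref{prop:joining-lower-bound}).

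Finally, your second regime deforming toward PIC2 via a B\"ohm--Wilking family is beside the point. Property (ii) only asks for pinching toward $C_{\textup{PIC}1}$, and the passage from PIC1 to PIC2 is handled separately in \cite{brendle-PIC}. Attaching such a family to your cone would itself require a joining argument you do not supply.
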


Following the approach of~\cite{brendle-PIC}, we will build the family of cones $\{ \hat{C}(b) \}$ pinching toward $C_{\textup{PIC}1}$ by connecting two families $\{ C_1(b) \}_{0 < b \leq b_1}$ and $\{ C_2(b) \}_{0 < b \leq b_2}$.
The family $C_1$ is an inward deformation of the cone $\{ S \in C_{\textup{PIC}} : \textup{Ric}(S)_{11} + \textup{Ric}(S)_{22} \geq 0 \}$, while the family $C_2$ is an outward deformation of the cone $C_1(b_1) \cap C_{\textup{PIC}1}$.
The arguments follow the architecture of~\cite{brendle-PIC}, but several estimates must be sharpened substantially in the construction of both families. 

The first cone family has the greatest potential for refining the estimates, which we carry out in Section~\ref{section:first-family}.
An important observation is that, while various bounds obtained in~\cite{brendle-PIC}*{\S 3} are separately sharp, their equality cases force the spectrum of $\textup{Ric}(S)$ to be concentrated in different regions.
Thus, we obtain sharper estimates by retaining additional variables and optimizing them jointly only at the last step; this produces quadratic expressions over the domain of PIC curvature components, notably the function $\cF(\xi,s,\tau)$ studied in Lemma~\ref{lemma:important-properties}.

While the above strategy extends the viable dimension to $n \geq 7$, obtaining the final dimensions $n \in \{ 5, 6\}$ requires a new technique for the second family.
Compared to~\cite{brendle-PIC}*{\S 4}, in Definition~\ref{def:a=b} we introduce the auxiliary term $B_n b \, \textup{scal}(S)$ to the second family, offering additional flexibility.
Because this family is simpler, defined only through the operator~\eqref{eqn:Z-tensor} that controls the pinching toward PIC1, the only available improvement comes from estimating the derivative $\frac{d}{dt}\big\rvert_{t=0} \tilde{Z}_{\lambda}(S)$ optimally on boundary faces with $\tilde{Z}_{\lambda}(S) = 0$.
As $\lambda \uparrow 1$, this term becomes unfavorable, and the conditions derived in~\cite{brendle-PIC}*{\S 4} cannot be simultaneously satisfied in low dimensions.
Instead, in Proposition~\ref{prop:why-the-long-face}, we prove that the faces $\tilde{Z}_{\lambda}(S) = 0$ are not reached when $\lambda$ is quantitatively close to $1$.
Moreover, we select the parameters of the second family to allocate the PIC inequalities more efficiently between the $\lambda=0$ and $\lambda=1$ constraints for joining the two cones.
This choice is explained in Section~\ref{section:computational-tools}, and the refinement for joining the two families is established in Section~\ref{section:joining-condition}.

This manuscript is organized as follows.
In Section~\ref{sec:preliminaries}, we recall the key expressions involved in Hamilton's ODE.
We then introduce the first and second families in Sections~\ref{section:first-family} and~\ref{section:second-family}.
In Lemmas~\ref{lemma:important-properties} and \ref{lemma:properties-II}, we collect some elementary inequalities satisfied by the parameters of these families, which are crucial for establishing their transversal invariance under Hamilton's ODE.
Finally, in Section~\ref{section:joining-condition}, we join the cones $C_1(b_1) = C_2(b_2)$, enabling the construction of the pinching family $\{ \hat{C}(b) \}_{b \in (0,B)}$ of Theorem~\ref{thm:technical-construction}.
In low dimensions, connecting the cones is a delicate step that requires combining multiple inequalities derived from PIC, weighted with appropriate coefficients.
In dimension five, which is the most challenging, the final improvement is obtained by invoking the quadratic pinching condition derived from the first family; this step is carried out as \hyperref[step:pic-n=5]{Case I} of Proposition~\ref{prop:joining-lower-bound}.

\subsection{Computational tools}\label{section:computational-tools}

The manuscript is entirely self-contained and does not require \textsc{Mathematica}.
The construction of the families $C_1(b)$ and $C_2(b)$ involved significant algebraic manipulations and numerical optimization, which were performed and verified using \textsc{Mathematica}.

We first designed families $C_1(b)$ and $C_2(b)$ depending on parameters $a,\gamma , \omega : (0,b_1] \to (0,\infty)$, for the first family, and $a,\Xi,r : (0,b_2] \to (0,\infty)$, for the second one, as in the construction of~\cite{brendle-PIC}.
The requirements of transversal invariance under the modified ODE $\frac{d}{dt} S = \cQ(S) + D_{a,b}(S)$ and of the endpoint condition $C_1(b_1) = C_2(b_2)$ can be reduced, through the computations performed in Sections~\ref{section:first-family} and~\ref{section:second-family}, to a finite list of conditions on these auxiliary functions.
After abstractly deriving the various constraints for these parameters, we imposed the ansatz
\begin{align*}
        C_1(b) &: \qquad a(b) = b + D_n b^2, \qquad & \gamma(b) &= C b , \qquad & \omega(b) &= \Omega_n b^{-1}, \\
        C_2(b) &: \qquad a(b) = b + A_n b^2, \qquad & \Xi(b) &= \sqrt{U b}, \qquad & r(b) &= B_n \,  b \Xi(b).
\end{align*}
As $n \to \infty$, the computation of~\cite{brendle-PIC}*{\S 3 and \S 4} indicates that the parameters $C,Q,B_n$ can be chosen independently of the dimension, while $A_n, D_n = O(n)$, $\Omega_n  = O(n^{-1})$.
Moreover, there should exist constants $\omega_1, B_0$, independent of the dimension, for which the endpoints $b_1, b_2$ satisfy
\[
\Omega_n = \omega_1 b(n), \qquad b_1(n) = O(n^{-1}), \qquad b_2(n)^{-1} = B_0 ( n- n_0)^2.
\]
A numerical search using \textsc{Mathematica} produces uniform first family parameters satisfying the conditions of Section~\ref{section:first-family} for every $n \geq 5$, upon taking $(D_n, C, \Omega_n, \omega_1) = ( \frac{n-1}{2}, \frac{2}{5}, \frac{1}{n-2}, 4)$.
In Section~\ref{section:second-family}, we obtain viable second family parameters for every $n \geq 7$.
The search for compatible parameters between the two families is greatly simplified upon further imposing the endpoint requirement
\[
\Xi(b_2) ( 1 + (n-2) p (b_2)) - p(b_2) = \tfrac{1}{n-2},
\]
which enables us to apply Proposition~\ref{prop:joining-lower-bound}.
Thus, we arrive at the admissible choices
\[
A_n = \tfrac{n-2}{2}, \qquad U = 2, \qquad B_n=4, \qquad b_2(n)^{-1} = 2(n-2)^2.
\]
The most challenging dimensions $n \in \{ 5, 6\}$ require a more precise modification, described in~\eqref{eqn:b2(n)-parameters}.

\subsection{Acknowledgments}
We thank Yipeng Wang for helpful conversations.
The first author was supported by the National Science Foundation under grant DMS2403981 and by the Simons Foundation.
The second author was supported by a Simons Dissertation Fellowship, an A.G.~Leventis Foundation Scholarship, and an Onassis Foundation Scholarship.

\section{Preliminaries}\label{sec:preliminaries}

First, we recall some preliminary properties of algebraic curvature operators after~\cites{brendle-schoen , bohm-wilking}.
We let $S^2 ( \fs \fo(n))$ denote the space of self-adjoint endomorphisms of $\fs \fo(n)$, and let $S^2_B(\fs \fo(n)) \subset S^2( \fs \fo(n))$ denote the subspace of tensors satisfying the first Bianchi identity.
We refer to elements of $S^2_B(\fs \fo(n))$ as \textbf{algebraic curvature operators}.

For an endomorphism $T \in S^2(\fs \fo(n))$, which need not satisfy the first Bianchi identity, we define the bilinear form $\textup{Ric}(T)$ and the quantity $\textup{scal}(T)$ by contraction.
We say that a $T \in S^2( \fs \fo(n))$ is \textbf{weakly PIC (WPIC)}, denoted $T \in C_{\textup{PIC}}$, if
\begin{equation}\label{eqn:I(T;F)}\tag{WPIC}
    Z_1(T;F) := T_{1313} + T_{1414} + T_{2323} + T_{2424} + 2 \, T_{1342} + 2 \, T_{1423} \geq 0
\end{equation}
for every orthonormal four-frame $F = \{ e_1, e_2, e_3, e_4 \}$.
If $T \in S^2_B(\fs \fo(n))$, then the first Bianchi identity gives $T_{1342} + T_{1423} = - T_{1234}$, so this condition is equivalent to the usual WPIC inequality
\[
T_{1313} + T_{1414} + T_{2323} + T_{2424} - 2 \,\ T_{1234} \geq 0.
\]
Similarly, we say that $T \in S^2 (\fs \fo(n))$ is \textbf{weakly PIC1 (WPIC1)}, denoted $T \in C_{\textup{PIC}1}$, if
\begin{equation}\label{eqn:wpic-1-condition}
Z_{\lambda}(T;F) := T_{1313} + \lambda^2 T_{1414} + T_{2323} + \lambda^2 T_{2424} + 2 \lambda (T_{1342} + T_{1423}) \geq 0 \tag{WPIC1}
\end{equation}
holds for every orthonormal four-frame $F = \{ e_1, e_2, e_3, e_4 \}$ and every $\lambda \in [0,1]$.
When $T \in S^2_B(\fs \fo(n))$, the first Bianchi identity makes the WPIC1 condition equivalent to
\[
T_{1313}+\lambda^2T_{1414}
+T_{2323}+\lambda^2T_{2424}
-2\lambda T_{1234}\ge 0
\]
for every orthonormal four-frame and every $\lambda \in [0,1]$.
The cones consisting of operators $T$ satisfying these conditions are respectively denoted by $C_{\mathrm{PIC}}$ and $C_{\mathrm{PIC}1}$.
If an operator $T$ satisfies the condition~\eqref{eqn:I(T;F)} (respectively, ~\eqref{eqn:wpic-1-condition}) strictly, it is called \textbf{PIC} (respectively, \textbf{PIC1}).
In what follows, whenever the choice of a four-frame is clear, we suppress the dependence of $Z_1(T;F)$ and $Z_{\lambda}(T;F)$ on $F$ and write simply $Z_1(T)$ and $Z_{\lambda}(T)$.

We denote by $\owedge$ the Kulkarni-Nomizu product of curvature operators, defined componentwise as
\[
( A \owedge B)_{ijk\ell} = A_{ik} B_{j \ell} - A_{i \ell} B_{jk} - A_{jk} B_{i \ell} + A_{j \ell} B_{ik}.
\]
Following~\cites{bohm-wilking , brendle-duke}, we have
\[
\textup{Ric}(h \owedge h) = 2 \, ( \textup{tr} \, h) h - 2h^2, \quad \textup{Ric} ( h \owedge \textup{id}) = (n-2) h + ( \textup{tr} \, h) \, \textup{id}, \quad \textup{Ric} ( \textup{id} \owedge \textup{id}) = 2(n-1) \, \textup{id}.
\]
Given an algebraic curvature operator $S \in S^2_B (\fs \fo(n))$, we define
\begin{equation}\label{eqn:ell-a,b}
    \ell_{a,b}(S) := S + b \, \textup{Ric}(S) \owedge \textup{id} + \frac{1}{n} (a-b) \, \textup{scal}(S) \, \textup{id} \owedge \textup{id}.
\end{equation}
We define the operator $\cQ(S) := S^2 + S^{\#}$ on $S^2(\fs \fo(n))$, where $\#$ is Hamilton's bilinear operator on $S^2(\fs \fo(n))$.
Notably, $S^2_B(\fs\fo(n))$ is invariant under $\cQ$.
We also denote $D_{a,b}(S) = \ell_{a,b}^{-1} \circ \cQ \circ \ell_{a,b} (S) - \cQ(S)$, so the computation of~\cite{bohm-wilking} gives
\begin{equation}\label{eqn:D-a-b}
    \begin{split}
    D_{a,b}(S) &= 2( b + \tfrac{n-2}{2} b^2-a) \, \overset{\circ}{\textup{Ric}}(S) \owedge \overset{\circ}{\textup{Ric}}(S) + 2a \, \textup{Ric}(S) \owedge \textup{Ric}(S) + 2b^2 \, \overset{\circ}{\textup{Ric}}(S)^2 \owedge \textup{id} \\
    & \qquad + \frac{nb^2(1-2b) - 2(a-b)(1-2b+nb^2)}{n(1+2 (n-1) a)} |\overset{\circ}{\textup{Ric}}(S)|^2 \, \textup{id} \owedge \textup{id}.
    \end{split}
\end{equation}
In what follows, we will denote by $\kappa$ the last coefficient appearing in this evolution, namely
\begin{equation}\label{eqn:kappa}
    \kappa(a,b) := \frac{nb^2(1-2b) - 2(a-b)(1-2b+nb^2)}{n (1 + 2(n-1) a)}
\end{equation}
Taking traces in this expression, we obtain the Ricci tensor and the scalar curvature as
\begin{align}
    \textup{Ric} (D_{a,b}(S)) 
        &= - 4b \, \textup{Ric}(S)^2 
        + \frac{8}{n} \Bigl(b + \frac{n-2}{2} a \Bigr)
        \textup{scal}(S) \textup{Ric}(S), \notag \\
        &\qquad + \frac{2}{n}
        \frac{n^2 b^2 - 2(n-1)(a-b)(1-2b)}
        {1+ 2(n-1) a} |\overset{\circ}{\textup{Ric}}(S)|^2 \, \textup{id}
        + \frac{4}{n^2} (a-b) \textup{scal}(S)^2 \, \textup{id},
        \label{eqn:ric-dab}\raisetag{1.2\baselineskip} \\
    \textup{scal} (D_{a,b}(S)) 
        &=  - 4b \, |\textup{Ric}(S)|^2 
        + \frac{4}{n} (b + (n-1) a) \textup{scal}(S)^2, \notag \\
        &\qquad + 2 \frac{n^2 b^2 - 2(n-1) (a-b) (1-2b)}
        {1+2(n-1)a} |\overset{\circ}{\textup{Ric}}(S)|^2.
        \label{eqn:scal-dab}\raisetag{1.2\baselineskip}
\end{align}
Moreover, if the algebraic curvature operator $R \in S^2_B(\fs \fo(n))$ evolves by Hamilton's ODE $\frac{d}{dt} R = \cQ(R)$, then the operators $S = \ell_{a,b}^{-1}(R)$ and $\textup{Ric}(S)$ evolve by
\allowdisplaybreaks{
\begin{align}
    \frac{d}{dt} S &= \cQ(S) + D_{a,b}(S), \label{eqn:evolve-S} \\
    \frac{d}{dt} \textup{Ric}(S) &= 2 \, S \ast \textup{Ric}(S) - 4b \, \textup{Ric}(S)^2 + \frac{8}{n} \Bigl( b + \frac{n-2}{2} a \Bigr) \textup{scal}(S) \textup{Ric}(S) \label{eqn:evolve-ric-S} \\
    & \qquad + \frac{2}{n} \frac{n^2 b^2 - 2(n-1) (a-b)(1-2b)}{1 + 2(n-1) a} |\overset{\circ}{\textup{Ric}}(S)|^2 \, \textup{id} + \frac{4}{n^2} (a-b) \, \textup{scal}(S)^2 \, \textup{id}, \notag
\end{align}}
where we denoted $(S \ast H)_{ik} := \sum_{p,q=1}^n S_{ipkq} H_{pq}$.
Along a path $a = a(b)$, we define the functions
\begin{equation}\label{eqn:P(ab)Q(ab)}
    P := 2 \frac{(1+ (n-2) b)^2}{1+2 (n-1) a}, \qquad Q := \frac{2}{n} \frac{(1+ 2(n-1) a)^2 - (1+ (n-2) b)^2}{1+2(n-1)a}.
\end{equation}
By~\eqref{eqn:scal-dab}, an algebraic curvature operator $S$ evolving by the ODE $\frac{d}{dt} S = \cQ(S) + D_{a,b}(S)$ satisfies
\begin{equation}\label{eqn:ddt-scal}
    \frac{d}{dt} \textup{scal}(S) = P \, |\textup{Ric}(S)|^2 + Q \, \textup{scal}(S)^2.
\end{equation}

We record that weakly positive isotropic curvature implies the weak three-positivity of the Ricci tensor; see also~\cite{bamler-cabezas-rivas-wilking}*{Proposition 2.2} and~\cite{brendle-PIC}*{Lemma A.4}.

\begin{lemma}\label{lemma:consequences-of-PIC}
For $n \geq 5$, let $T \in S^2_B(\fs \fo(n)) \cap C_{\textup{PIC}}$ be a weakly PIC algebraic curvature operator.
Then, for every $k$-plane $\Pi_k$ with $3 \leq k \leq n$, we have $\textup{tr}_{\Pi_k} \textup{Ric}(T) \geq 0$.
\end{lemma}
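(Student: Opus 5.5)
Since $\textup{tr}_{\Pi_k}\textup{Ric}(T)$ is linear in $T$, the plan is to show that the trace of the Ricci tensor over any $3$-plane is a nonnegative combination of WPIC expressions $Z_1(T;F)$, and then reduce the case $k > 3$ to the case $k = 3$ by averaging.

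\textbf{Reduction to $k=3$.} Let $\{e_1,\dots,e_k\}$ be an orthonormal basis of $\Pi_k$. Each $e_i$ lies in exactly $\binom{k-1}{2}$ of the $3$-subsets of the basis, so
\[
\textup{tr}_{\Pi_k}\textup{Ric}(T)=\binom{k-1}{2}^{-1}\sum_{\{i,j,l\}}\textup{tr}_{\mathrm{span}\{e_i,e_j,e_l\}}\textup{Ric}(T).
\]
It therefore suffices to treat $k=3$.

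\textbf{The case $k=3$.} Fix an orthonormal basis $e_1,e_2,e_3$ of $\Pi_3$ and extend it to an orthonormal basis $e_1,\dots,e_n$ of $\bR^n$. Then
\[
\textup{tr}_{\Pi_3}\textup{Ric}(T)=\sum_{i=1}^3\sum_{p}T_{ipip}=2(T_{1212}+T_{1313}+T_{2323})+\sum_{i\le 3<p}T_{ipip}.
\]
For each pair of distinct indices $p,q\ge 4$ and each $i\in\{1,2,3\}$, with $\{j,l\}=\{1,2,3\}\setminus\{i\}$, I would take the four-frame $(e_i,e_p,e_j,e_q)$ and also the frame obtained by replacing $e_q$ with $-e_q$. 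Adding the two WPIC inequalities cancels the terms $2T_{i j q p}$ (the $T_{1234}$-type term) and yields
\[
T_{ijij}+T_{iqiq}+T_{pjpj}+T_{pqpq}\ge 0.
\]
The unwanted quantities here are $T_{pqpq}$ with $p,q\ge 4$, and they appear with a plus sign, so they cannot simply be absorbed. To get rid of them I would instead use frames that mix directions: replace $e_p,e_q$ by rotations such as $(e_p\pm e_q)/\sqrt2$, or pair an index of $\Pi_3$ against a unit vector orthogonal to $\Pi_3$. The precise choice is the step I expect to be the main obstacle.

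\textbf{The main obstacle, and the fallback.} The coefficients must be chosen so that every term $T_{pqpq}$ with $p,q\ge4$ and every mixed term cancels, leaving a positive multiple of $\textup{tr}_{\Pi_3}\textup{Ric}$. Arranging this cancellation directly is delicate, so my first move would be the known route: cite~\cite{bamler-cabezas-rivas-wilking}*{Proposition 2.2}, or reproduce the argument of~\cite{brendle-PIC}*{Lemma A.4}. That argument averages the WPIC inequality over the four-frames $\{e_1,e_2,v,w\}$, where $\{v,w\}$ ranges over orthonormal pairs in $\Pi_3^{\perp}$ rotated freely, and combines this with frames having two vectors in $\Pi_3$. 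After averaging over the unitary group of a complexified $2$-plane, a Schur-type computation should produce a positive multiple of
\[
\textup{Ric}_{11}+\textup{Ric}_{22}+c\,(\ldots).
\]
Symmetrizing over the cyclic choice of the pair among $e_1,e_2,e_3$ should then give $\textup{tr}_{\Pi_3}\textup{Ric}(T)\ge0$; the hypothesis $n\ge5$ guarantees that $\Pi_3^\perp$ has dimension at least $2$, so these averages are available. I expect the averaging-coefficient bookkeeping to be the crux of the proof.
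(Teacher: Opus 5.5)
Your reduction from $k$ to $3$ by averaging over $3$-subsets of an orthonormal basis is correct. Sign-averaging the WPIC inequality is also the right first move. However, the case $k=3$, which is the entire content of the lemma, is left unproved. You name the cancellation as the main obstacle and then fall back on a citation plus a speculative sketch (unitary averaging, a Schur-type computation that ``should'' work) in which no frames or coefficients are actually fixed.

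The obstacle comes from your frame ordering. For an orthonormal frame $(f_1,f_2,f_3,f_4)$, the sign-averaged WPIC inequality reads $T(f_1,f_3,f_1,f_3)+T(f_1,f_4,f_1,f_4)+T(f_2,f_3,f_2,f_3)+T(f_2,f_4,f_2,f_4)\geq 0$. It only involves the four ``cross'' planes and never the sectional curvatures of $f_1\wedge f_2$ or $f_3\wedge f_4$. By using $(e_i,e_p,e_j,e_q)$ you made $e_p\wedge e_q$ a cross plane, which is why $T_{pqpq}$ appeared. If you put the two outside vectors in the same pair, using $(e_i,e_j,e_p,e_q)$ with $p,q\geq 4$, you get $T_{ipip}+T_{iqiq}+T_{jpjp}+T_{jqjq}\geq 0$ instead. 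No rotated frames or group averaging are needed.

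The remaining bookkeeping is short. Set $A:=T_{1212}+T_{1313}+T_{2323}$ and $B:=\sum_{i=1}^3\sum_{\alpha=4}^n T_{i\alpha i\alpha}$, so that $\textup{tr}_{\Pi_3}\textup{Ric}(T)=2A+B$.

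First, sum the inequality above over the three pairs $\{i,j\}\subset\{1,2,3\}$ and over all $4\leq p<q\leq n$. This gives $2(n-4)B\geq 0$, hence $B\geq 0$ because $n\geq 5$. This is exactly where $\dim\Pi_3^\perp\geq 2$ is used.

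Second, for each $\alpha\geq 4$, add the sign-averaged inequalities for the three pairings $(e_1,e_2,e_3,e_\alpha)$, $(e_1,e_3,e_2,e_\alpha)$, $(e_1,e_\alpha,e_2,e_3)$. This gives $2A+2\sum_{i\leq 3}T_{i\alpha i\alpha}\geq 0$. Summing over $\alpha$ yields $(n-3)A+B\geq 0$.

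Therefore
\[
\textup{tr}_{\Pi_3}\textup{Ric}(T)=2A+B=\tfrac{2}{n-3}\bigl[(n-3)A+B\bigr]+\tfrac{n-5}{n-3}\,B\geq 0,
\]
using $n\geq 5$ once more. This is precisely the paper's argument. Without these two families of frames and the explicit weights, your proposal does not yet prove the lemma.
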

\begin{proof}
Throughout, we will apply the WPIC inequality on four-frames $\{ e_i , e_j , e_k, \pm e_{\ell} \}$ for pairwise distinct $i,j,k,\ell$ and average over the two signs, whereby
\begin{equation}\label{eqn:Tijkl}
    T_{ikik} + T_{i \ell i \ell} + T_{jkjk} + T_{j \ell j \ell} \geq 0.
\end{equation}
Fix a $3$-frame $\{ e_1, e_2 , e_3\}$ and apply this inequality to the three partitions of $\{ 1,2, 3, \alpha\}$, for each $\alpha$:
\begin{align*}
    T_{1313} + T_{1\alpha 1 \alpha} + T_{2323} + T_{2 \alpha 2 \alpha} &\geq 0, \\
    T_{1212} + T_{1 \alpha 1 \alpha } + T_{2323} + T_{3 \alpha 3 \alpha } &\geq 0, \\
    T_{1212} + T_{1313} + T_{2 \alpha 2 \alpha} + T_{3 \alpha 3 \alpha} &\geq 0.
\end{align*}
Adding these inequalities and dividing by $2$, we obtain
\[
(T_{1212} + T_{1313} + T_{2323}) + (T_{1 \alpha 1 \alpha} + T_{2 \alpha 2 \alpha} + T_{3 \alpha 3 \alpha}) \geq 0.
\]
Summing this inequality over $a \geq 4$ shows that
\begin{equation}\label{eqn:3-ric}
    (n-3) (T_{1212} + T_{1313} + T_{2323}) + \sum_{\alpha=4}^n (T_{1 \alpha 1 \alpha} + T_{2 \alpha 2 \alpha} + T_{3 \alpha 3 \alpha}) \geq 0.
\end{equation}
Next, for distinct $\alpha, \beta \geq 4$, we apply~\eqref{eqn:Tijkl} to the four-frames
\[
\{ e_1, e_2, e_{\alpha}, e_{\beta} \}, \qquad \{ e_1, e_3, e_{\alpha} ,  e_{\beta} \}, \qquad \{ e_2, e_3, e_{\alpha} , e_{\beta} \}.
\]
Adding the resulting three inequalities and dividing by $2$ gives
\[
(T_{1 \alpha 1 \alpha } + T_{2 \alpha 2 \alpha} + T_{3 \alpha 3 \alpha}) + (T_{1 \beta 1 \beta} + T_{2 \beta 2 \beta} + T_{3 \beta 3 \beta}) \geq 0.
\]
Summing this inequality over all pairs $4 \leq \alpha<\beta \leq n$, we deduce that
\[
\sum_{\alpha=4}^n (T_{1 \alpha 1 \alpha} + T_{2 \alpha 2 \alpha} + T_{3 \alpha 3 \alpha}) \geq 0.
\]
Combining this inequality with~\eqref{eqn:3-ric}, multiplied by $\frac{2}{n-3}$, we obtain
\begin{align*}
    0 &\leq 2 ( T_{1212} + T_{1313} + T_{2323}) + \tfrac{2}{n-3} \sum_{\alpha=4}^n (T_{1 \alpha 1 \alpha} + T_{2 \alpha 2 \alpha} + T_{3 \alpha 3 \alpha}) \\
    &\leq 2 ( T_{1212} + T_{1313} + T_{2323}) + \sum_{\alpha=4}^n (T_{1 \alpha 1 \alpha} + T_{2 \alpha 2 \alpha} + T_{3 \alpha 3 \alpha})
\end{align*}
because the second term is non-negative and $\frac{2}{n-3} \leq 1$ for $n \geq 5$.
Finally, we compute that
\[
\textup{Ric}(T)_{11} + \textup{Ric}(T)_{22} + \textup{Ric}(T)_{33} = 2 ( T_{1212} + T_{1313} + T_{2323}) + \sum_{\alpha=4}^n (T_{1 \alpha 1 \alpha} + T_{2 \alpha 2 \alpha} + T_{3 \alpha 3 \alpha}) \geq 0
\]
by the above expressions.
Therefore, $\textup{tr}_{\Pi_3} \textup{Ric}(T) \geq 0$ for every $3$-frame $\Pi_3$, and the result extends to every $k$-frame with $3 \leq k \leq n$ by averaging.
This completes the proof.
\end{proof}

\begin{lemma}\label{lemma:pinching-from-PIC}
Let $H$ be a symmetric bilinear form on $\mathbb{R}^n$ with $\textup{tr}(H)>0$ and suppose that $\lambda_{\max} (H) \leq \frac{1}{2} \textup{tr}(H)$ and $\textup{tr}_{\Pi_2} H \geq \frac{2 \alpha}{n} \textup{tr}(H)$ on every two-plane $\Pi_2$, for some $\alpha \leq 1$.
Then, 
\begin{enumerate}[(i)]
    \item Every unit vector $v$ satisfies
    \begin{equation}\label{eqn:h-bound-1}
     \tfrac{2(n-1) \alpha - n}{n(n-2)} \textup{tr}(H) \leq H(v,v) \leq \min \bigl\{ 1 - \tfrac{n-1}{n} \alpha , \tfrac{1}{2} \bigr\} \, \textup{tr}(H).   
    \end{equation}
    \item Every $k$-plane $\Pi_k$, with $2 \leq k \leq n-2$, satisfies
    \begin{equation}\label{eqn:trace-k-plane}
    \max \bigl\{ \tfrac{k}{n} \alpha, 1 - \tfrac{n-k}{2} \bigr\} \, \textup{tr}(H) \leq \textup{tr}_{\Pi_k} H \leq \min \bigl\{ 1 - \tfrac{(n-k) \alpha}{n} , \tfrac{k}{2} \bigr\} \textup{tr}(H).
    \end{equation}
\end{enumerate}
\end{lemma}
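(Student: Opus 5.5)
Throughout, I normalize $\textup{tr}(H)=1$; both sides of every inequality scale linearly in $H$. The plan is to obtain every bound from one averaging identity together with complementation. The identity is this: if $\{u_1,\dots,u_k\}$ is an orthonormal basis of a $k$-plane $\Pi_k$ with $k \geq 2$, then each $u_i$ lies in exactly $k-1$ of the pairs $\{u_i,u_j\}$, so
\[
(k-1)\,\textup{tr}_{\Pi_k} H = \sum_{i<j} \bigl( H(u_i,u_i) + H(u_j,u_j) \bigr).
\]
Applying the hypothesis $\textup{tr}_{\Pi_2} H \geq \frac{2\alpha}{n}$ to each of the $\binom{k}{2}$ two-planes $\textup{span}\{u_i,u_j\}$ gives
\[
\textup{tr}_{\Pi_k} H \geq \tfrac{k}{2}\cdot\tfrac{2\alpha}{n} = \tfrac{k}{n}\alpha \qquad \text{for every } k\text{-plane},\ 2\leq k\leq n.
\]
Complementation means $\textup{tr}_{\Pi_k} H = 1 - \textup{tr}_{\Pi_k^\perp} H$. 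The other hypothesis enters through the elementary bound $\textup{tr}_{\Pi_m} H \leq m\,\lambda_{\max}(H) \leq \frac{m}{2}$, valid because each diagonal entry $H(u,u)$ is at most $\lambda_{\max}(H)$.

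For part (i), I fix a unit vector $v$. The bound $H(v,v)\leq \lambda_{\max}(H)\leq \tfrac12$ is immediate. For the other upper bound, I apply the averaged estimate to the $(n-1)$-plane $v^\perp$, which gives $\textup{tr}_{v^\perp} H \geq \frac{n-1}{n}\alpha$. Complementation then yields $H(v,v) \leq 1 - \frac{n-1}{n}\alpha$.

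For the lower bound in (i), I complete $v$ to an orthonormal basis $\{v,w_1,\dots,w_{n-1}\}$ and use $H(v,v)+H(w_j,w_j)\geq \frac{2\alpha}{n}$ for each $j$. Summing over $j$ and using $\sum_j H(w_j,w_j) = 1 - H(v,v)$ gives
\[
(n-2)H(v,v) + 1 \geq \tfrac{2(n-1)\alpha}{n},
\]
which rearranges to the stated lower bound $\frac{2(n-1)\alpha - n}{n(n-2)}$.

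For part (ii), with $2\leq k\leq n-2$, I would combine the two tools as follows:
\begin{itemize}
\item The lower bound $\frac{k}{n}\alpha$ is the averaged estimate itself.
\item The lower bound $1 - \frac{n-k}{2}$ comes from complementation together with $\textup{tr}_{\Pi_k^\perp}H \leq \frac{n-k}{2}$.
\item The upper bound $\frac{k}{2}$ is $\textup{tr}_{\Pi_k}H \leq k\lambda_{\max}(H)$.
\item The upper bound $1-\frac{(n-k)\alpha}{n}$ comes from applying the averaged estimate to $\Pi_k^\perp$. This step is where $k\leq n-2$ is used, so that $\Pi_k^\perp$ has dimension at least $2$.
\end{itemize}

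There is no real obstacle here. The only points needing care are the pair-counting in the averaging identity and checking that the dimension restrictions ($n-1\geq 2$ in (i), $n-k\geq 2$ in (ii)) make every averaged two-plane estimate legitimate. The assumption $\alpha\leq 1$ is not needed for the inequalities themselves; it only guarantees that the stated bounds are mutually consistent.
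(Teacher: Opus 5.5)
Your proposal is correct and follows essentially the same route as the paper. Both arguments get the $\frac{k}{n}\alpha$ bound by summing the two-plane hypothesis over the $\binom{k}{2}$ pairs of an orthonormal basis of $\Pi_k$, take the $\frac{1}{2}$-type bounds from $\lambda_{\max}(H) \leq \frac{1}{2}\textup{tr}(H)$, pass to the other bounds by complementation, and obtain the lower bound in (i) by summing over the $n-1$ planes $\textup{span}\{v,e_i\}$. The only cosmetic difference is that the paper derives $\textup{tr}_{\Pi_k}H \leq \frac{k}{2}\textup{tr}(H)$ by applying the complementary lower bound to $\Pi_k^\perp$, whereas you read it off directly from $k\,\lambda_{\max}(H)$; the two are equivalent.
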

\begin{proof}
We first obtain a lower bound for all $k$-planes $\Pi_k = \textup{span} \{ e_1, \dots, e_k \}$.
We extend the vectors $\{ e_i \}_{i=1}^k$ to an $n$-frame $\{ e_1, \dots, e_k, f_1, \dots, f_{n-k}\}$ and use $H(f_{\alpha}, f_{\alpha}) \leq \frac{1}{2} \textup{tr}(H)$ for each $\alpha$, coming from the property $\lambda_{\max} (H) \leq \frac{1}{2} \textup{tr}(H)$.
Therefore,
\begin{equation}\label{eqn:planes-bound-1}
\textup{tr}_{\Pi_k} H = \textup{tr}(H) - \textstyle{ \sum_{\alpha=1}^{n-k} H(f_{\alpha}, f_{\alpha}) } \geq \textup{tr}(H) - \tfrac{1}{2} (n-k) \, \textup{tr}(H).
\end{equation}
On the other hand, applying the bound~\eqref{eqn:planes-bound-1} to the plane $\Pi^{\perp}_k$ produces
\begin{equation}\label{eqn:planes-bound-2}
\textup{tr}(H) - \textup{tr}_{\Pi_k} H = \textup{tr}_{\Pi^{\perp}_k} H \geq \textup{tr}(H) - \tfrac{1}{2} k \, \textup{tr}(H).
\end{equation}
Combining these bounds shows that, for every $k$-plane $\Pi_k$,
\begin{equation}\label{eqn:first-part-18-19}
    ( 1 - \tfrac{n-k}{2} ) \, \textup{tr}(H) \leq \textup{tr}_{\Pi_k} H \leq \tfrac{k}{2} \, \textup{tr}(H).
\end{equation}
Next, for $k \geq 2$, we apply the bound $\textup{tr}_{\Pi_2} H \geq \frac{2 \alpha}{n} \textup{tr}(H)$ for each $2$-plane $\textup{span} \{ e_i, e_j \}$ with $i<j$ and sum over the $\binom{k}{2} = \frac{k(k-1)}{2}$ such pairs.
Thus, we obtain
\[
\frac{k(k-1)}{2} \frac{2 \alpha}{n} \textup{tr}(H) \leq \sum_{1 \leq i < j \leq k} \bigl[ H(e_i, e_i) + H(e_j, e_j) \bigr] = (k-1) \sum_{i=1}^k H(e_i, e_i) =(k-1) \, \textup{tr}_{\Pi_k} H.
\]
This implies the bound
\begin{equation}\label{eqn:second-k-plane-bound}
    \textup{tr}_{\Pi_k } H \geq \tfrac{k}{n} \alpha \, \textup{tr}(H), \qquad \text{for every } \; k \geq 2.
\end{equation}
Moreover, if $k \leq n-2$, we can apply the bound~\eqref{eqn:second-k-plane-bound} to $\Pi_k^{\perp}$ to obtain
\begin{equation}\label{eqn:final-k-plane-bound}
    \textup{tr}_{\Pi_k} H = \textup{tr}(H) - \textup{tr}_{\Pi^{\perp}_k} (H) \leq (1 - \tfrac{n-k}{n} \alpha ) \, \textup{tr}(H).
\end{equation}
Finally, for $k=1$, the above extension yields
\[
\tfrac{2(n-1)}{n} \alpha \, \textup{tr}(H) \leq \sum_{i=1}^{n-1} \textup{tr}_{ \la v, e_i \rg } H = (n-2) H(v,v) + \textup{tr}(H).
\]
The claimed inequalities~\eqref{eqn:h-bound-1} and~\eqref{eqn:trace-k-plane} follow from combining these properties.
\end{proof}

\section{A one-parameter family of invariant cones \texorpdfstring{$C_1(b)$}{C1(b)} }\label{section:first-family}

In this section we construct a one-parameter family of cones $C_1(b)$ that are invariant under the Hamilton ODE $\frac{d}{dt} R = \cQ(R)$.
\begin{definition}\label{def:first-cone-family}
For $n \geq 5$, we let $b_1(n) = \frac{1}{4(n-2)}$ and $a(b) = b + \frac{n-1}{2} b^2$.
For $0 < b \leq b_1$, let $\cE_1(b)$ denote the set of algebraic curvature operators $S \in S^2_B(\fs \fo(n))$ satisfying the following two conditions.
\begin{enumerate}[(i)]
    \item For every orthonormal pair $\{ e_1, e_2 \}$, we have the inequality
\begin{equation}\label{eqn:ric-11-first}
    \textup{Ric}(S)_{11} + \textup{Ric}(S)_{22} + \tfrac{4}{ 5n} b \, \textup{scal}(S) \geq 0.
\end{equation}
    \item There exists a non-negative operator $N \in S^2 (\fs \fo(n))$, denoted $N \geq 0$ and a tensor $U \in C_{\textup{PIC}}$ such that $S = U+N$ and for every orthonormal frame $\{ e_1, \dots, e_n \}$, we have
\begin{equation}\label{eqn:S-define-ON}
\begin{split}
    \textup{Ric}(S)_{22} - \textup{Ric}(S)_{11} \leq \left[ \frac{\textup{scal}(S)^{\frac{1}{2}}}{(n-2) b} \sum_{p=3}^n (N_{1p1p} +N_{2p2p}) \right]^{\frac{1}{2}}.
\end{split}
\end{equation}
We note that the tensors $U,N \in S^2(\fs \fo(n))$ are not required to satisfy the first Bianchi identity.
\end{enumerate}
For $0 < b \leq b_1$, we define the cone of operators $C_1(b) := \ell_{a,b}(\cE_1(b))$, where $\ell_{a,b}$ is given in~\eqref{eqn:ell-a,b}.
\end{definition}
As in the construction of~\cite{brendle-PIC}*{\S 3}, we see that the family $C_1(b)$ is closed, convex, and $O(n)$-invariant for each $0 < b \leq b_1$.
Moreover, every $S \in \cE_1(b) \setminus \{ 0 \}$ has $\textup{scal}(S)>0$.

We first collect some properties resulting from these definitions, in terms of the functions $\kappa, P, Q$ introduced in Section~\ref{sec:preliminaries}.
We observe the following identities.
\begin{equation}\label{eqn:P-Q-kappa-identites}
\begin{split}
    1 + 2(n-1) a &= (1 + (n-1) b)^2, \\
    \kappa &= \frac{b^2 (1-2b-n(n-1) b^2)}{n (1 + (n-1) b)^2}, \\
    P &= 2 \Bigl( \frac{1+(n-2) b}{1+(n-1) b} \Bigr)^2, \\
    Q &= 2b \frac{ ( n + (n-1)^2 b) (2 + (3n-4) b + (n-1)^2 b^2) }{n (1 + (n-1) b)^2}.
\end{split}
\end{equation}

To show that the family $C_1(b)$ is invariant under the Hamilton ODE, we will need some elementary properties of the parameters, which we collect below.

\begin{lemma}\label{lemma:important-properties}
We define $a = b + \frac{n-1}{2} b^2$ and $b_1(n) = \frac{1}{4(n-2)}$.
For every $n \geq 5$ and every $b \in (0,b_1(n)]$, the following properties are satisfied.
\begin{enumerate}[(i)]
    \item The quantities $\kappa, P , Q$ are strictly positive.
    Moreover, $\frac{7}{4} < P < 2$.
    \item We consider the quantity
    \[
    K := \frac{2+(14n-13) b + (12n^2 - 28n+18) b^2}{5n (1 + (n-1) b)^2} b.
    \]
    This satisfies $K \geq \frac{2}{5n} b$.
    Moreover, the cubic function
    \[
    (n-2) b z^3 + (K-b) z^2 + K \Bigl[ \frac{(1 + \frac{2}{5} b)^2}{n(n-2)} + \frac{n-2}{2(n-3)} \Bigl( \frac{1 + \frac{2}{5} b}{n-2} - z \Bigr)^2 \Bigr]  + \frac{b(1 + \frac{2}{5} b) ( 2 +(5n-9) b)}{10n^2}
    \]
    is strictly positive for $z \geq 0$.
    \item We define the function
\begin{align*} 
\cF(\xi, s, \tau) &:=\frac{P \xi^4}{4} + \Bigl[ \frac{P}{2n} + \frac{Q}{2} + s + \frac{n}{4(n-2)} P \tau^2 + \Bigl( 4b + \frac{n}{n-2} \Bigr) \tau - \frac{4(n-2)}{n} a \Bigr] \, \xi^2  +\frac{\xi^2+s^2}{2(n-2)^2 b} 
\end{align*}
We suppose that $\xi \geq 0$ and
\[
\tau \geq - \frac{2}{n} \Bigl( 1 + \frac{2}{5} b \Bigr), \qquad s \geq \frac{2(n-2)(n-3) b \xi^2 - (n-1)(n-4) \tau - \frac{2(n-2)^2}{n}}{n^2 - 7n+14}.
\]
Then, it holds that $\cF(\xi,s,\tau) \geq 0$.
\end{enumerate}
\end{lemma}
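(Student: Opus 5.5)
The plan is to treat all three parts as elementary inequalities in the single small parameter $b\in(0,\frac{1}{4(n-2)}]$. Throughout, the substitution $x := (n-1)b$ will be useful. It satisfies $0<x\le \frac{n-1}{4(n-2)}\le \frac13$ for $n\ge5$, and every expression will be written as a polynomial in $b$ with coefficients polynomial in $n$. The closed forms recorded in~\eqref{eqn:P-Q-kappa-identites} are the starting point.

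\textbf{Part (i).} For $Q$, the closed form is manifestly positive. For $\kappa$, positivity reduces to $1-2b-n(n-1)b^2>0$. At $b=b_1$ this reads
\[
1-\tfrac{1}{2(n-2)}-\tfrac{n(n-1)}{16(n-2)^2}>0,
\]
which is easily checked for $n\ge5$ (the left side is $\ge 1-\frac16-\frac{20}{144}>0$), and the expression is decreasing in $b$. For $P$, the bound $P<2$ is immediate, since $\frac{1+(n-2)b}{1+(n-1)b}<1$. The lower bound $P>\frac74$ amounts to
\[
\frac{1+(n-2)b}{1+(n-1)b} = 1-\frac{b}{1+(n-1)b} > \sqrt{7/8}.
\]
The subtracted term is increasing in $b$, so it suffices to check $b=b_1$, where it equals $\frac{1}{4(n-2)+n-1}=\frac{1}{5n-9}\le\frac1{16}$. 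This is smaller than $1-\sqrt{7/8}\approx0.0646$.

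\textbf{Part (ii).} The inequality $K\ge\frac{2}{5n}b$ is equivalent to
\[
2+(14n-13)b+(12n^2-28n+18)b^2\ge 2(1+(n-1)b)^2,
\]
and expanding shows that every coefficient of the difference is nonnegative. For the cubic, write $c := \frac{1+\frac25 b}{n-2}$. The only possibly negative contribution is $(K-b)z^2$, since $K<b$ in general. I will split into two regimes.
\begin{itemize}
\item[(a)] For $z\ge z_0 := \frac{b-K}{(n-2)b}$, the cubic term dominates: $(n-2)bz^3+(K-b)z^2\ge0$, and all remaining terms are positive.
\item[(b)] For $0\le z\le z_0$, use $(K-b)z^2\ge -(b-K)z_0^2$. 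Then bound the quadratic bracket below by its value at the worst $z$ in this range. Since $z_0\approx\frac1{n-2}\approx c$, the square $(c-z)^2$ is small there, so I will mainly rely on the constant terms $K\frac{(1+\frac25b)^2}{n(n-2)}$ and $\frac{b(1+\frac25b)(2+(5n-9)b)}{10n^2}$.
\end{itemize}
In regime (b), $b-K\le b(1-\frac{2}{5n})$ and $z_0^2\lesssim\frac1{(n-2)^2}$, so the required comparison is $O(b/n^2)$ against $O(b/n^2)$. Since this is tight, the cleaner route is to minimize the whole expression over $z\ge0$ exactly. At the critical point, the derivative
\[
3(n-2)bz^2+2(K-b)z-K\frac{n-2}{n-3}(c-z)=0
\]
is a quadratic in $z$. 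I would eliminate $z$ and reduce to a polynomial inequality in $(n,b)$, verified by showing that each coefficient in $x$ is positive for $n\ge5$.

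\textbf{Part (iii).} For fixed $\xi,\tau$, $\cF$ is increasing in $s$ once $s\ge -(n-2)^2b\,\xi^2$, since
\[
\partial_s\cF=\xi^2+\frac{s}{(n-2)^2b}.
\]
So I will first argue that it suffices to take $s$ equal to its lower bound $s_0(\xi,\tau)$. If $s_0$ lies below the critical point $-(n-2)^2b\xi^2$, I will instead evaluate at that minimizer and check positivity separately.

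Substituting $s=s_0$ leaves a function of $(\xi^2,\tau)$ that is quadratic in $\tau$ with positive leading coefficient $\frac{n}{4(n-2)}P\xi^2+\frac{(n-1)^2(n-4)^2}{2(n-2)^2b(n^2-7n+14)^2}$. I would minimize over $\tau$ on the half-line $\tau\ge-\frac2n(1+\frac25b)$, either at the interior vertex or at the endpoint. This yields a function of $u=\xi^2\ge0$, essentially a quadratic $\alpha u^2+\beta u+\gamma$ with $\alpha\ge P/4>0$ and $\gamma\ge0$ coming from the $\frac{s^2}{2(n-2)^2b}$ term. Nonnegativity then follows from $\beta\ge0$ or from $\beta^2\le4\alpha\gamma$. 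Here the large positive term $\frac{\xi^2}{2(n-2)^2b}\ge\frac{\xi^2}{2}\cdot\frac{4}{n-2}\cdot\ldots$ must absorb the negative $-\frac{4(n-2)}{n}a\,\xi^2$ and the $\tau$-linear term, and $P>\frac74$ from (i) will be used.

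The main obstacle is this last discriminant check, together with the exact minimization in (ii). Both are sharp only up to constants in low dimensions $n=5,6$. I expect to verify them by reducing to explicit polynomials in $x=(n-1)b\in(0,\frac13]$ and $n$, treating $n=5,6$ by direct substitution and $n\ge7$ by coefficient-sign arguments.
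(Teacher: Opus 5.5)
\textbf{Verdict: genuine gap.} Your part (i) and the bound $K\ge\frac{2}{5n}b$ are correct and match the paper. The gap is in the two substantive claims: positivity of the cubic in (ii), and $\mathcal{F}\ge0$ in (iii). In both you replace the argument by a program (eliminate the critical point; minimize successively in $s$, $\tau$, $\xi^2$) and assert that the resulting polynomial inequalities close, without showing it.

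\textbf{Part (ii).} The one shortcut you spell out fails. In regime (b), for small $b$ one has $K\approx\frac{2b}{5n}$ and $z_0\approx\frac1{n-2}$. So $-(b-K)z_0^2\approx-\frac{b}{(n-2)^2}$, while the positive terms you retain are only of order $\frac{b}{5n^2}$. This is not ``tight''; it is off by a constant factor, because the cubic term must also be used on $[0,z_0]$. For $n=5$ and small $b$, the true minimum over $z$ is only a few percent of the individual terms. So the coefficient sign pattern you expect after eliminating $z$ cannot be taken on faith. The missing idea is a decoupling: move $Kz^2$ into the $K$-bracket, writing the cubic as
\[
(n-2)bz^3-bz^2+K\Bigl(\tfrac{3n-8}{2(n-3)}z^2-\tfrac{1+\frac25 b}{n-3}z+\tfrac{3(1+\frac25 b)^2}{2n(n-3)}\Bigr)+\tfrac{b(1+\frac25 b)(2+(5n-9)b)}{10n^2}.
\]
Then bound each piece separately:
\begin{itemize}
\item $(n-2)bz^3-bz^2\ge-\frac{4b}{27(n-2)^2}$ for $z\ge0$;
\item the bracket is at least $\frac{4(1+\frac25b)^2}{n(3n-8)}$;
\item $K\ge\frac{2b}{5n}$;
\item the last term is at least $\frac{b}{5n^2}$.
\end{itemize}
This leaves the one-variable check $\frac1{5n^2}+\frac{8}{5n^2(3n-8)}-\frac{4}{27(n-2)^2}>0$ for $n\ge5$.

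\textbf{Part (iii).} Your second case, where $s_0<-(n-2)^2b\xi^2$ and you evaluate at the unconstrained minimizer, is where the whole difficulty sits, and you defer it. There the $\xi^4$-coefficient becomes $\frac P4-\frac{(n-2)^2b}{2}$. At $b=b_1$ this equals $\frac P4-\frac{n-2}{8}<0$ for $n\ge6$. So positivity would have to come from coupling that case's constraint on $\tau$ with the $\frac{nP}{4(n-2)}\tau^2\xi^2$ term, and you give no argument for this. In the other case, minimizing the $\tau$-quadratic yields a rational function of $u=\xi^2$, not a quadratic. So your closing step ``$\beta\ge0$ or $\beta^2\le4\alpha\gamma$'' does not apply as stated.

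The paper avoids all case distinctions by completing the square around $s=-3(n-2)b\xi^2$ rather than around the minimizer:
\[
s\xi^2+\tfrac{s^2}{2(n-2)^2b}\ge\tfrac{n-5}{n-2}\,s\xi^2-\tfrac92\,b\xi^4 .
\]
This has two advantages:
\begin{itemize}
\item It costs only $\frac92b\le\frac38<\frac7{16}<\frac P4$ in the $\xi^4$-coefficient.
\item It keeps a term $\frac{n-5}{n-2}s\xi^2$ with nonnegative coefficient, into which the lower bound on $s$ can be inserted directly.
\end{itemize}
After dividing by $\xi^2$ and discarding the resulting positive $\xi^2$-term, what remains is a quadratic $\mathcal{G}(\tau)$ that is increasing on $\tau\ge-\frac2n(1+\frac25b)$. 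It therefore suffices to evaluate at that endpoint. There the lower bound is decreasing in $b$, so the final check is at $b=b_1$.
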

\begin{proof}
We separate the proof of the lemma into each item.

\smallskip \noindent \textbf{Property $(i)$.}
The positivity of $P, Q$ is clear. 
For $\kappa$, we observe that
\[
2b + n(n-1) b^2 \leq \tfrac{1}{2(n-2)} + \tfrac{n(n-1)}{16(n-2)^2} < \tfrac{n+2}{4(n-2)} \leq \tfrac{7}{12} < 1.
\]
Thus, $\kappa>0$.
Next, we prove $P \in ( \frac{7}{4}, 2)$.
Because $b \leq \frac{1}{4(n-2)} \leq \frac{1}{12}$ for $n \geq 5$, we have
\[
\tfrac{1+(n-2) b}{1+(n-1) b} = 1 - \tfrac{b}{1+(n-1) b} \geq 1 - \tfrac{1}{5n-9} \geq \tfrac{15}{16}.
\]
Therefore, $P \geq 2 ( \frac{15}{16})^2 = \frac{225}{128} > \frac{7}{4}$.
The reverse bound $P<2$ is clear.

\smallskip \noindent \textbf{Property $(ii)$.}
The bound $K \geq \frac{2b}{5n}$ follows from the identity
\[
K - \frac{2b}{5n} = b^2 \frac{10 n - 9 + (10n^2 - 24n+16) b}{5n (1+(n-1) b)^2} > 0
\]
where all terms are positive for $n \geq 5$.
To study the asserted cubic function, we first collect the powers of $z$ to see that this expression equals
\[
    (n-2) b z^3 - bz^2 + K \left( \frac{3n-8}{2(n-3)} z^2 - \frac{1+ \frac{2}{5} b}{n-3} z + \frac{3 ( 1 + \frac{2}{5} b)^2}{2n(n-3)} \right) + \frac{b(1 + \frac{2}{5} b) (2 +(5n-9) b)}{10 n^2}.
    \]
To estimate the first term, we use Young's inequality in the form
\[
(n-2) b z^3 - bz^2 \geq - \tfrac{4b}{27(n-2)^2}
\]
for $z \geq 0$.
Moreover, the function multiplying $K$ in the above expression can be estimated by
\[
\frac{3n-8}{2(n-3)} z^2 - \frac{1+ \frac{2}{5} b}{n-3} z + \frac{3 ( 1 + \frac{2}{5} b)^2}{2n(n-3)} \geq \frac{4(1 + \frac{2}{5} b)^2}{n(3n-8)} \geq \frac{4}{n(3n-8)}.
\]
Finally, the constant term of the cubic function satisfies the bound
\[
\tfrac{1}{10n^2} b ( 1 + \tfrac{2}{5} b) \, [ 2 + (5n-9) b] \geq \tfrac{1}{5n^2} b \qquad \text{for } \; b \geq 0.
\]
Combining these bounds with $K \geq \tfrac{2}{5n} b$, we deduce that
\begin{align*}
&(n-2) b z^3 - bz^2 + K \bigl( \tfrac{3n-8}{2(n-3)} z^2 - \tfrac{1+ \frac{2}{5} b}{n-3} z + \tfrac{3 ( 1 + \frac{2}{5} b)^2}{2n(n-3)} \bigr) + \tfrac{b(1 + \frac{2}{5} b) (2 +(5n-9) b)}{10 n^2} \\
&\geq b \bigl[ \tfrac{1}{5n^2} + \tfrac{8}{5n^2(3n-8)} - \tfrac{4}{27(n-2)^2} \bigr] = \tfrac{21n^2 - 164 n + 324}{135 n (n-2)^2 (3n-8)} b > 0.
\end{align*}
Indeed, the numerator of the last expression is increasing for $n \geq 5$ and equals $29$ at $n=5$.

\smallskip \noindent \textbf{Property $(iii)$.}
For the function $\cF(\xi,s,\tau)$, we use the identities~\eqref{eqn:P-Q-kappa-identites} for $P$ and $Q$ to write
\[
\tfrac{1}{2} ( \tfrac{1}{n}P+Q) - \tfrac{4(n-2)}{n} a = \tfrac{1}{n} (1 -2(n-3) b - (n-1)(n-3) b^2).
\]
Next, we treat $\cF$ as a quadratic function of $s$.
Completing the square, we obtain the inequality
\begin{align*}
      s \xi^2 + \frac{s^2}{2(n-2)^2 b} &= \frac{(s + 3(n-2) b \xi^2)^2}{2(n-2)^2 b} + \frac{n-5}{n-2} s \xi^2 - \frac{9}{2} b \xi^4 \\
    &\geq \xi^2 \Bigl( \frac{n-5}{n-2} s - \frac{9}{2} b \xi^2 \Bigr).  
\end{align*}
We now apply the assumed lower bound for $s$ to obtain
\begin{align*}
 s \xi^2 + \frac{s^2}{2(n-2)^2 b} &\geq \Bigl[ \frac{2(n-3)(n-5) b}{n^2-7n+14} - \frac{9}{2} b \Bigr] \, \xi^4 - \frac{(n-1)(n-4)(n-5)}{(n-2)(n^2-7n+14)} \tau \xi^2 - \frac{2(n-2)(n-5)}{n(n^2-7n+14)} \xi^2.
\end{align*}
We substitute these expressions into the definition of $\cF(\xi,s,\tau)$ to obtain
\begin{align*} 
\xi^{-2} \cF(\xi, s, \tau) &\geq \Bigl( \frac{P}{4} + \frac{2(n-3)(n-5) b}{n^2-7n+14} - \frac{9}{2} b  \Bigr) \xi^2 + \frac{nP}{4(n-2)} \tau^2 + \Bigl( 4b + \frac{3n^2-15n+20}{(n-2)(n^2-7n+14)} \Bigr) \tau \\
    &\quad + \frac{1-2(n-3)b-(n-1)(n-3)b^2}{n} + \frac{1}{2(n-2)^2 b} - \frac{2(n-2)(n-5)}{n(n^2-7n+14)}.
\end{align*}
The coefficient of $\xi^2$ on the right-hand side is positive, due to $b \leq \frac{1}{12}$, $P> \frac{7}{4}$, and
\[
\tfrac{1}{4} P - \tfrac{9}{2} b + \tfrac{2(n-3)(n-5)}{n^2-7n+14} b \geq \tfrac{7}{16} - \tfrac{3}{8} = \tfrac{1}{16}.
\]
Thus, to show that $\cF(\xi, s, \tau) \geq 0$, it suffices to evaluate the right-hand side at $\xi=0$.
The remaining expression on the right-hand side is a quadratic polynomial in the variable $\tau \geq - \frac{2}{n} ( 1+ \frac{2}{5} b)$, given by
\begin{align*}
    \cG(\tau) &:= \tfrac{nP}{4(n-2)} \tau^2 + \bigl( 4b + \tfrac{3n^2-15n+20}{(n-2)(n^2-7n+14)} \bigr) \tau \\
    & \quad + \tfrac{1}{2(n-2)^2} b^{-1} - \tfrac{2(n-2)(n-5)}{n(n^2-7n+14)} + \tfrac{1}{n} ( 1-2(n-3)b-(n-1)(n-3)b^2 ).
\end{align*}
For $n \geq 5$, we observe the bound
\[
\tfrac{3n^2-15n+20}{(n-2)(n^2-7n+14)} = \tfrac{3}{n-2} + \tfrac{3n^2-15n+20}{(n-2)(n^2-7n+14)} > \tfrac{3}{n-2}.
\]
Thus, using the bound $P < 2$, we see that for $\tau \geq - \frac{2}{n}(1 + \frac{2}{5} b)$, the derivative $\cG'(\tau)$ satisfies
\begin{align*}
    \cG'(\tau) = \tfrac{n}{2(n-2)} P\tau + 4b + \tfrac{3n^2-15n+20}{(n-2)(n^2-7n+14)} &\geq - \tfrac{n}{2(n-2)} \cdot 2 \cdot \tfrac{2}{n} ( 1 + \tfrac{2}{b}) + 4b + \tfrac{3}{n-2} \\
    &\geq - \tfrac{2}{n-2} ( 1 + \tfrac{2}{5} b) + 4b + \tfrac{3}{n-2}. 
\end{align*}
Hence, $\cG'(\tau) \geq \frac{1}{n-2} + b > 0$ for $\tau \geq - \frac{2}{n}(1 + \frac{2}{5} b)$, so to prove $\cG \geq 0$, it suffices to compute that $\cG ( - \frac{2}{n} (1 + \frac{2}{5} b)) \geq 0$.
Using $P > \frac{7}{4}$ and $(1 + \frac{2}{5} b)^2 > 1$, we can bound $\cG ( - \frac{2}{n} (1 + \frac{2}{5} b))$ from below by
\begin{align*}
    n \, \cG \Bigl( - \frac{2}{n} \bigl(1 + \frac{2}{5} b \bigr) \Bigr) &\geq \frac{7}{4(n-2)} - 2 \Bigl( 1 + \frac{2}{5} b \Bigr) \Bigl( 4b + \frac{3n^2 - 15n+20}{(n-2)(n^2 - 7n+14)} \Bigr) \\
    & \quad + 1 - 2(n-3) b - (n-1)(n-3) b^2 + \frac{n}{2(n-2)^2 b} - \frac{2(n-2)(n-5)}{n^2-7n+14}.
\end{align*}
Finally, the right-hand side of this inequality is strictly decreasing in $b>0$, so it suffices to evaluate the final expression at $b = \frac{1}{4(n-2)}$.
Collecting the resulting expressions for $b = \frac{1}{4(n-2)}$ over a common denominator, we obtain the expression
\[
\frac{35(n-6)^4+ 315 (n-6)^3 + 601(n-6)^2 - 231 (n-6) + 584}{80 (n-2)^2 (n^2 - 7n+14)}
\]
which is clearly positive for $n \geq 5$.
To summarize, we have proved that $\cF(\xi,s,\tau) \geq 0$, as claimed.
\end{proof}

\begin{lemma}\label{lemma:S-curvature-optimal-bound}
    Consider an algebraic curvature operator $S \in \cE_1(b)$ and let $\lambda_1 \leq \lambda_2 \leq \lambda_3$ be the three smallest eigenvalues of $\textup{Ric}(S)$.
    We decompose $S = U + N$ in $S^2(\fs \fo(n))$ according to Definition~\ref{def:first-cone-family}, with $U \in C_{\textup{PIC}}$ and $N$ a non-negative operator.
    Suppose that $\lambda_1 + \lambda_2 = - \frac{4}{5n} b\,  \textup{scal}(S)$.
    Then, we have
    \[
    \sum_{p=3}^n (S_{1p1p} + S_{2p2p} ) \Bigl( \lambda_p - \frac{\lambda_1 + \lambda_2}{2} \Bigr) \geq \Bigl( \sum_{p=3}^n N_{1p1p} + N_{2p2p} \Bigr) \Bigl( \lambda_3 - \frac{\lambda_1+\lambda_2}{2} \Bigr). 
    \]
    Moreover, we have the bounds
    \[
    \lambda_1 \geq - \frac{n +  \frac{4}{5}(n-1) b}{n(n-2)} \textup{scal}(S), \qquad \lambda_3 \leq \frac{1 + \frac{4}{5n}b}{n-2} \textup{scal}(S), \qquad \lambda_3 - \lambda_1 \leq \frac{2(1 + \frac{2}{5} b)}{n-2} \textup{scal}(S).
    \]
    Finally, it holds that
    \begin{equation}\label{eqn:ric-0-inequality}
        |\overset{\circ}{\textup{Ric}}(S)|^2 \geq \frac{1}{2} ( \lambda_2 - \lambda_1)^2 + \frac{2(1+\frac{2}{5} b)^2}{n(n-2)} \textup{scal}(S)^2 + \frac{n-2}{n-3} \Bigl( \frac{ n + \frac{4}{5} b}{n(n-2)} \textup{scal}(S) - \lambda_3 \Bigr)^2.
    \end{equation}
\end{lemma}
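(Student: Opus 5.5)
The plan is to work in an orthonormal frame $\{e_1,\dots,e_n\}$ diagonalizing $\textup{Ric}(S)$, with $\textup{Ric}(S)e_p=\lambda_p e_p$ and $\lambda_1\le\lambda_2\le\lambda_3\le\dots\le\lambda_n$. Write $c:=\frac{4}{5n}b\,\textup{scal}(S)$, so that $\lambda_1+\lambda_2=-c$. I would prove the three claims in the order eigenvalue bounds, then \eqref{eqn:ric-0-inequality}, then the weighted inequality, since the last one needs an upper bound on the $\lambda_p$.

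\emph{Eigenvalue bounds.} Since $\lambda_p\ge\lambda_3$ for $p\ge3$, we get
\[
(n-2)\lambda_3\le\sum_{p\ge3}\lambda_p=\textup{scal}(S)+c,
\]
which is the upper bound for $\lambda_3$. Next, $\lambda_1=-c-\lambda_2\ge -c-\lambda_3$, and inserting the bound on $\lambda_3$ gives exactly $\lambda_1\ge-\frac{n+\frac45(n-1)b}{n(n-2)}\textup{scal}(S)$. Finally, $\lambda_3-\lambda_1\le 2\lambda_3+c$ yields the third bound.

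\emph{The bound on $|\overset{\circ}{\textup{Ric}}(S)|^2$.} Set $m=\textup{scal}(S)/n$ and split $\sum_i(\lambda_i-m)^2$ into the pair $\{1,2\}$ and the block $p\ge3$.
\begin{itemize}
\item For the pair, $(\lambda_1-m)^2+(\lambda_2-m)^2=\frac12(\lambda_2-\lambda_1)^2+\frac12(\lambda_1+\lambda_2-2m)^2$, and $\lambda_1+\lambda_2-2m=-\frac2n(1+\frac25b)\textup{scal}(S)$.
\item For the block, let $\mu=\frac{\textup{scal}(S)+c}{n-2}=\frac{n+\frac45 b}{n(n-2)}\textup{scal}(S)$ be the mean of $\lambda_3,\dots,\lambda_n$. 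Then $\sum_{p\ge3}(\lambda_p-m)^2=(n-2)(\mu-m)^2+\sum_{p\ge3}(\lambda_p-\mu)^2$, where $\mu-m=\frac{2(1+\frac25 b)}{n(n-2)}\textup{scal}(S)$.
\end{itemize}
The two mean-offset terms add up to $\frac{4(1+\frac25b)^2}{n^2}\bigl(\frac12+\frac1{n-2}\bigr)\textup{scal}(S)^2=\frac{2(1+\frac25b)^2}{n(n-2)}\textup{scal}(S)^2$. It remains to bound the variance of the block. Its minimal element $\lambda_3$ lies at distance $\mu-\lambda_3$ below the mean, so by Cauchy--Schwarz on the other $n-3$ deviations (which sum to $\mu-\lambda_3$),
\[
\sum_{p\ge3}(\lambda_p-\mu)^2\ge(\mu-\lambda_3)^2+\frac{(\mu-\lambda_3)^2}{n-3}=\frac{n-2}{n-3}(\mu-\lambda_3)^2.
\]
This gives \eqref{eqn:ric-0-inequality}.

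\emph{The weighted inequality.} Put $w_p=\lambda_p-\frac{\lambda_1+\lambda_2}{2}$, which is nondecreasing with $w_3>0$. Write
\[
S_{1p1p}+S_{2p2p}=x_p+y_p,\qquad x_p=U_{1p1p}+U_{2p2p},\quad y_p=N_{1p1p}+N_{2p2p}\ge0.
\]
The $N$-part gives $\sum y_pw_p\ge w_3\sum y_p$ trivially, so it suffices to show $\sum_{p\ge3}x_pw_p\ge0$. From \eqref{eqn:Tijkl} applied to $U\in C_{\textup{PIC}}$ and the frame $\{e_1,e_2,e_p,e_q\}$ we have $x_p+x_q\ge0$ for $p\ne q$. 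Hence at most one index $r$ has $x_r<0$, and $x_q\ge|x_r|$ for all $q\ne r$. Therefore
\[
\sum_p x_pw_p\ge|x_r|\Bigl(\sum_{q\ne r,\,q\ge3}w_q-w_r\Bigr),
\]
and the task reduces to showing $w_r\le\sum_{q\ne r}w_q$. I would get this from the trace identity $\sum_{p\ge3}w_p=\textup{scal}(S)+c+\frac{(n-2)c}{2}$ together with an upper bound $\lambda_r\le\frac12(\textup{scal}(S)+\dots)$. That upper bound should come from Lemma~\ref{lemma:consequences-of-PIC} applied to $\ell_{a,b}(S)$, or from the complementary $(n-1)$-plane traces via \eqref{eqn:ric-11-first} and Lemma~\ref{lemma:pinching-from-PIC}.

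This last step, controlling the single possibly negative $x_r$ against the largest weight, is where I expect the main difficulty. If the crude bound on $\lambda_r$ is insufficient, I would instead apply the full PIC inequality on frames $\{e_1,e_2,e_r,\pm e_q\}$ averaged over all $q$ with weights $w_q$, to absorb $x_r w_r$ directly.
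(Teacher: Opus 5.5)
Your eigenvalue bounds and your proof of \eqref{eqn:ric-0-inequality} are correct and essentially match the paper's computation. The paper applies Cauchy--Schwarz to $\lambda_4,\dots,\lambda_n$ about $\textup{scal}(S)/n$ directly, while you first center the block at its mean $\mu$; the constants agree.

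The gap is in the weighted inequality, at exactly the step you flag. Your reduction is sound: $x_p+x_q\ge 0$ leaves at most one negative $x_r$, and $\sum_p x_pw_p\ge |x_r|\bigl(\sum_{q\ne r}w_q-w_r\bigr)$. So everything hinges on $w_n\le\sum_{q=3}^{n-1}w_q$. With $c=\frac{4}{5n}b\,\textup{scal}(S)$ one has
\[
\sum_{q=3}^{n-1}w_q-w_n=\textup{scal}(S)-2\lambda_n+\tfrac{n-2}{2}\,c ,
\]
so you need an upper bound on the \emph{largest} Ricci eigenvalue, essentially $\lambda_n\le\frac12\textup{scal}(S)$. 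Your proved bounds only control $\lambda_1$ and $\lambda_3$, and none of your suggested sources gives this.

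\begin{itemize}
\item Lemma~\ref{lemma:consequences-of-PIC}, applied to $S$ or to $\ell_{a,b}(S)$, gives only weak three-positivity. With $k=n-1$ this yields $\lambda_n\le\textup{scal}$ and nothing better; a spectrum $(-\frac c2,-\frac c2,c,\dots,c,M)$ is weakly three-positive.
\item Lemma~\ref{lemma:pinching-from-PIC} takes $\lambda_{\max}(H)\le\frac12\textup{tr}(H)$ as a \emph{hypothesis}, so invoking it is circular.
\item \eqref{eqn:ric-11-first} alone gives only $\lambda_n\le\textup{scal}(S)+O(c)$.
\item Your fallback adds nothing, since averaging the PIC inequality over $\pm e_q$ returns precisely $x_r+x_q\ge0$.
\end{itemize}

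The weight condition cannot be bypassed. Take $U$ diagonal on $\{e_i\wedge e_j\}$ with $U_{1n1n}=U_{2n2n}=-\frac12$ and all other diagonal entries at least $\frac12$. It is PIC, because a totally isotropic $z\wedge w$ carries at most half of its mass on the components $e_j\wedge e_n$. Yet it has $x_n=-1$ and $x_q=1$ otherwise, so $\sum_p x_pw_p<0$ whenever $w_n>\sum_{q<n}w_q$.

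The missing input is short. For the unit eigenvector $e_n$ of $\lambda_n$,
\[
\textup{scal}(S)-2\lambda_n=\sum_{i,j\le n-1}\bigl(U_{ijij}+N_{ijij}\bigr)\ge 0 .
\]
Here $N_{ijij}\ge0$ because $N\ge0$. For the $U$-part, sum \eqref{eqn:Tijkl} for $U$ over all ordered four-tuples of distinct indices in $\{1,\dots,n-1\}$, which is possible because $n-1\ge4$. By symmetry this gives a positive multiple of $\sum_{i,j\le n-1}U_{ijij}$, so that sum is non-negative.

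Hence $\lambda_n\le\frac12\textup{scal}(S)$, and since $c\ge0$ the weight condition holds and your argument closes. The paper does not carry this out itself; it defers the first inequality to Lemma A.3 of Brendle's PIC paper, applied as in his Lemma 3.9.
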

\begin{proof}
    The first inequality is a consequence of~\cite{brendle-PIC}*{Lemma A.3}, applied as in~\cite{brendle-PIC}*{Lemma 3.9}.
    For the second bound, let $\sigma := \textup{scal}(S)$.
    The hypothesis gives $\lambda_1 + \lambda_2 = - \frac{4}{5n} b  \sigma$, hence
    \[
    \sigma = \lambda_1+\lambda_2 +\sum_{p=3}^n \lambda_p \geq - \frac{4}{5n} b \sigma + 
    (n-2) \lambda_3  \implies \lambda_3 \leq \frac{1 + \frac{4}{5n} b}{n-2} \sigma.
    \]
    We also observe that $\lambda_1 + (\lambda_2 - \lambda_1) = \lambda_2 \leq \lambda_3$, while $2 \lambda_1 + (\lambda_2 - \lambda_1) = \lambda_1 + \lambda_2 = - \frac{4}{5n} b \sigma$.
    Therefore,
    \[
    \lambda_1 \geq - \tfrac{\frac{4}{5} (n-1) b + n}{n(n-2)} \sigma, \qquad \implies \qquad \lambda_3 - \lambda_1 \leq \tfrac{n + \frac{4}{5} b n + n}{n(n-2)} \sigma = \tfrac{2(1+\frac{2}{5} b)}{n-2} \sigma
    \]
    as claimed.
    This proves the first part of our assertion.

    To prove the bound~\eqref{eqn:ric-0-inequality}, we use $\lambda_1 + \lambda_2 = - \frac{4}{5n} b \sigma$ to compute
    \begin{align*}
    \Bigl( \lambda_1 - \frac{\sigma}{n} \Bigr)^2 + \Bigl(\lambda_2 - \frac{\sigma}{n} \Bigr)^2
    %\\ &= \Bigl( - \frac{1+\frac{2}{5} b}{n} \sigma - \frac{\lambda_2 - \lambda_1}{2} \Bigr)^2 + \Bigl( - \frac{1+\frac{2}{5} b}{n} \sigma + \frac{\lambda_2 - \lambda_1}{2} \Bigr)^2 \\
    &= \frac{2}{n^2} \Bigl( 1+ \frac{2}{5} b  \Bigr)^2 \sigma^2 + \frac{1}{2} (\lambda_2 - \lambda_1)^2.
    \end{align*}
    In addition, for the $n-3$ terms involving $\lambda_4, \dots, \lambda_n$, the Cauchy-Schwarz inequality implies
    \[
    \sum_{p=4}^n \Bigl( \lambda_p - \frac{1}{n} \sigma \Bigr)^2 \geq \frac{1}{n-3} \Bigl( \sum_{p=4}^n \lambda_p - \frac{n-3}{n} \sigma \Bigr)^2.
    \]
    On the other hand, we can write
    \[
    \sum_{p=4}^n \lambda_p = \sigma - ( \lambda_1 + \lambda_2 + \lambda_3) = \sigma + \frac{4}{5n} b \sigma - \lambda_3 = \frac{n+ \frac{4}{5} b }{n} \sigma - \lambda_3. 
    \]
    Adding these terms, we therefore obtain
    \begin{align*}
        \sum_{p=3}^n \Bigl( \lambda_p - \frac{\sigma}{n} \Bigr)^2 &\geq \Bigl( \lambda_3 - \frac{\sigma}{n} \Bigr)^2 + \frac{1}{n-3} \Bigl( \frac{n+ \frac{4}{5} b }{n} \sigma - \lambda_3 - \frac{n-3}{n} \sigma \Bigr)^2 \\
        &= \Bigl( \lambda_3 - \frac{\sigma}{n} \Bigr)^2 + \frac{1}{n-3} \Bigl( \frac{3+ \frac{4}{5} b }{n} \sigma - \lambda_3 \Bigr)^2 \\
        &= \frac{n-2}{n-3} \Bigl( \lambda_3 - \frac{n+\frac{4}{5} b}{n(n-2)} \sigma \Bigr)^2 + \frac{4(1 + \frac{2}{5} b)^2}{n^2(n-2)} \sigma^2.
    \end{align*}
    Simplifying the last expression, adding $|\overset{\circ}{\textup{Ric}}(S)|^2 = \sum_{p=1}^n ( \lambda_p - \frac{1}{n} \sigma)^2$, and combining the above computations establishes the desired inequality~\eqref{eqn:ric-0-inequality}.
    This completes the proof.
\end{proof}

\begin{lemma}\label{lemma:DS(a,b)-lower bound}
For any $S \in \cE_1(b) \setminus \{ 0 \}$ and any orthonormal four-frame $\{ e_1, e_2, e_3, e_4 \}$, we have
\begin{align*}
    \frac{b^2}{6n^2}&\leq  \frac{1}{\textup{scal}(S)^2} \Biggl[ b^2 \sum_{i=1}^4 ( \overset{\circ}{\textup{Ric}}(S)^2)_{ii} - \frac{1}{2} b^2  \bigl( \overset{\circ}{\textup{Ric}}(S)_{11} + \overset{\circ}{\textup{Ric}}(S)_{22} \bigr) \bigl( \overset{\circ}{\textup{Ric}}(S)_{33} + \overset{\circ}{\textup{Ric}}(S)_{44} \bigr) \Biggr] \\
    & \quad + \Bigl( b + \frac{n-1}{2} b^2  \Bigr) \, \frac{\bigl( \textup{Ric}(S)_{11} + \textup{Ric}(S)_{22} \bigr) \bigl( \textup{Ric}(S)_{33}+ \textup{Ric}(S)_{44} \bigr)}{\textup{scal}(S)^2}.   
\end{align*}
\end{lemma}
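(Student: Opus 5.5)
By homogeneity we normalize $\textup{scal}(S)=1$ and write $h=\overset{\circ}{\textup{Ric}}(S)$, so that $\textup{Ric}(S)=h+\frac1n\,\textup{id}$. We set
\[
x = h_{11}+h_{22}, \qquad y = h_{33}+h_{44}.
\]
The plan is to bound each group of terms below by expressions in $x$ and $y$ alone, and then minimize a quadratic over the admissible region.

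\textbf{Step 1: the quadratic term in $h$.} Since $(h^2)_{ii}=\sum_j h_{ij}^2\ge h_{ii}^2$, Cauchy--Schwarz on pairs gives
\[
\sum_{i=1}^4 (h^2)_{ii} \ge \tfrac12(x^2+y^2).
\]
Hence the bracket is at least $b^2\bigl(\tfrac12 x^2+\tfrac12 y^2-\tfrac12 xy\bigr)=\tfrac{b^2}{2}(x^2-xy+y^2)$. This expression is positive definite and is at least $\tfrac{b^2}{8}(x+y)^2$; it is also at least $\tfrac{3b^2}{8}(x-y)^2$.

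\textbf{Step 2: the product term.} The second term is
\[
a\bigl(x+\tfrac2n\bigr)\bigl(y+\tfrac2n\bigr),\qquad a=b+\tfrac{n-1}{2}b^2.
\]
Condition \eqref{eqn:ric-11-first} gives $x+\frac2n\ge -\frac{4}{5n}b$ and likewise $y+\frac2n\ge-\frac{4}{5n}b$, for every orthonormal pair. By Lemma~\ref{lemma:consequences-of-PIC}, applied to $\ell_{a,b}(S)$, or more directly from the decomposition $S=U+N$ with $U\in C_{\textup{PIC}}$, we also expect the traces of $\textup{Ric}(S)$ over $4$-planes to be nonnegative up to $O(b)$ corrections. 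In particular $(x+\frac2n)+(y+\frac2n)\ge -O(b)$, and the $(n-4)$-plane complement yields upper bounds on $x$ and $y$ via Lemma~\ref{lemma:pinching-from-PIC}. In the case where both factors are nonnegative, the product is nonnegative. Then it suffices that $\tfrac{b^2}{2}(x^2-xy+y^2)$ plus the product is at least $\tfrac{b^2}{6n^2}$. If $x,y$ are near $-\frac2n$, the quadratic gives $\tfrac{b^2}{2}\cdot\tfrac{4}{n^2}\cdot(1-1+1)=\tfrac{2b^2}{n^2}$. Otherwise the product term, of size $a\cdot\Theta(1/n^2)\gg b^2/n^2$, dominates. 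A clean way to organize this is to write $u=x+\frac2n$ and $v=y+\frac2n$, so the expression is
\[
\tfrac{b^2}{2}\Bigl((u-\tfrac2n)^2-(u-\tfrac2n)(v-\tfrac2n)+(v-\tfrac2n)^2\Bigr)+a\,uv,
\]
to be minimized over $u,v\ge -\frac{4}{5n}b$.

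\textbf{Step 3: the mixed-sign case (main obstacle).} When one factor is negative, say $u\in[-\frac{4}{5n}b,0)$, the term $a\,uv$ is negative and of size at most $a\cdot\frac{4b}{5n}\,v$. We need an upper bound on $v$, which comes from $\lambda_{\max}$ bounds: Lemma~\ref{lemma:pinching-from-PIC} and Lemma~\ref{lemma:S-curvature-optimal-bound} give $v\lesssim \frac{2+O(b)}{n-2}$. For such $u$, we have $x\approx-\frac2n$, so the quadratic term is at least $\tfrac{b^2}{2}\bigl(\tfrac{4}{n^2}+\tfrac2n|y|+y^2\bigr)$. This must absorb the negative contribution
\[
\Bigl(b+\tfrac{n-1}{2}b^2\Bigr)\cdot\tfrac{4b}{5n}\,v \approx \tfrac{4b^2}{5n}\,v.
\]
Since the quadratic in $v$ has leading coefficient $\tfrac{b^2}{2}$ and linear growth $\tfrac{b^2}{2}\cdot\tfrac{2}{n}\,v$ (from the $xy$ term with $x\approx -\tfrac2n$), the comparison is between $\tfrac{b^2}{n}v+\tfrac{b^2}{2}v^2$ and $\tfrac{4b^2}{5n}v$, which leaves a margin of $\tfrac{b^2}{5n}v\ge0$. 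The constant $\tfrac{2b^2}{n^2}$ then beats $\tfrac{b^2}{6n^2}$ after accounting for the $O(b^3)$ corrections, using $b\le\frac{1}{4(n-2)}$. I expect this case to be the delicate one: I would track the exact coefficients, including the $\frac{n-1}{2}b^2$ part of $a$ and the small shift $-\frac{4}{5n}b$ in $u$, and reduce to a one-variable quadratic inequality in $v$ checked on the admissible interval.
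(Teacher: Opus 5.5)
Your reduction is the paper's: Cauchy--Schwarz gives $\sum_{i=1}^4(h^2)_{ii}\ge\frac12(x^2+y^2)$, and condition \eqref{eqn:ric-11-first} gives $u,v\ge-\frac{4}{5n}b$. The lemma therefore reduces to
\[
\Phi(u,v):=\tfrac{b^2}{2}\bigl(x^2-xy+y^2\bigr)+a\,uv\ \ge\ \tfrac{b^2}{6n^2}\quad\text{whenever } u,v\ge-\tfrac{4}{5n}b,\qquad x=u-\tfrac2n,\ y=v-\tfrac2n .
\]
Nothing else about $S$ enters. In particular no upper bound on $v$ is needed, and the ones you cite are not available: Lemma~\ref{lemma:S-curvature-optimal-bound} assumes $\lambda_1+\lambda_2=-\frac{4}{5n}b\,\textup{scal}(S)$ and bounds only $\lambda_3$, not $\textup{Ric}(S)_{33}+\textup{Ric}(S)_{44}$ in an arbitrary frame.

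The gap is that this minimization, which is the whole content of the lemma, is never carried out. Your sketch of the critical case also has a sign error exactly at the minimizer.
\begin{itemize}
\item \textbf{Sign of the linear term.} With $x\approx-\frac2n$ one gets $x^2-xy+y^2=v^2-\frac2n v+\frac4{n^2}$. So the quadratic form contributes $-\frac{b^2}{n}v$, not $+\frac{b^2}{n}v$; your lower bound $\frac4{n^2}+\frac2n|y|+y^2$ is false for $y<0$.
\item \textbf{Actual minimum.} Adding $a\,uv\approx-\frac{4b^2}{5n}v$ gives, to leading order, $\frac{b^2}{2}\bigl(v-\frac{9}{5n}\bigr)^2+\frac{19}{50}\frac{b^2}{n^2}$. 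The minimum is about $\frac{19}{50}\frac{b^2}{n^2}$, attained at $y\approx-\frac{1}{5n}$. It is not $\frac{2b^2}{n^2}$ with a positive margin.
\item \textbf{The corrections are not small.} The ``$O(b^3)$ corrections'' come from the $\frac{n-1}{2}b^2$ part of $a$ and the shift in $u$, and they carry a factor of $n$. Relative to the main term they are of size about $36nb$. In the exact computation they lower the constant $19$ (in units of $\frac{b^2}{50n^2}$) to roughly $9.75$ at $b=b_1(n)$ for large $n$, which is not far above the $8.33$ needed for $\frac16$. They cannot be waved away.
\item \textbf{Nonnegative quadrant.} The case ``otherwise the product term dominates'' is left unquantified.
\end{itemize}

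The paper closes this with an exact identity that avoids any case analysis. In your notation,
\[
\Phi(u,v)=\tfrac{b^2}{2}\Bigl(x+y+\tfrac{11-2(n-4)b}{5n}\Bigr)^2+\tfrac{2b+(n-4)b^2}{2}\Bigl(u+\tfrac{4b}{5n}\Bigr)\Bigl(v+\tfrac{4b}{5n}\Bigr)+\tfrac{b^2}{50n^2}\bigl[19-4(9n-28)b-4n(n-4)b^2\bigr].
\]
The first two terms are nonnegative on the admissible region. For $b\le\frac{1}{4(n-2)}$ the bracket exceeds $\frac{39}{4}$, so $\Phi\ge\frac{39}{200}\frac{b^2}{n^2}\ge\frac{b^2}{6n^2}$.
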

\begin{proof}
We fix $S \in \cE_1(b) \setminus \{ 0 \}$, denote by $L(b)$ the infimum of the expression appearing on the right-hand side over orthonormal four-frames, and compute at a particular four-frame $\{ e_1, e_2, e_3, e_4 \}$ attaining $L(b)$.
For abbreviation, let $\sigma := \textup{scal}(S)$.
The Cauchy-Schwarz inequality implies 
\begin{align*}
    2 \sum_{i=1}^4 ( \overset{\circ}{\textup{Ric}}(S)^2)_{ii} &\geq 2(\overset{\circ}{\textup{Ric}}(S)_{11})^2 + 2(\overset{\circ}{\textup{Ric}}(S)_{22})^2 + 2(\overset{\circ}{\textup{Ric}}(S)_{33})^2 + 2( \overset{\circ}{\textup{Ric}}(S)_{44})^2 \\
    &\geq \bigl( \overset{\circ}{\textup{Ric}}(S)_{11} + \overset{\circ}{\textup{Ric}}(S)_{22} \bigr)^2 + \bigl( \overset{\circ}{\textup{Ric}}(S)_{33} + \overset{\circ}{\textup{Ric}}(S)_{44} \bigr)^2.
\end{align*}
We now introduce the variables $x := \frac{n}{\sigma} ( \overset{\circ}{\textup{Ric}}(S)_{11} + \overset{\circ}{\textup{Ric}}(S)_{22})$ and $y := \frac{n}{\textup{scal}(S)} ( \overset{\circ}{\textup{Ric}}(S)_{33} + \overset{\circ}{\textup{Ric}}(S)_{44})$.
The two-Ricci assumption of Definition~\ref{def:first-cone-family} for $S \in \cE_1(b) \setminus \{ 0 \}$ implies that $x,y \geq - 2(1+\frac{2}{5}b)$.
Thus, the Cauchy-Schwarz inequality shows that the right-hand side is bounded from below by
\begin{equation}\label{eqn:bound-DS-from-psi}
L(b) \geq \tfrac{1}{2n^2} \, \bigl[ b^2(x^2+y^2 - xy) + (2 b + (n-1) b^2)  (x+2)(y+2) \bigr].
\end{equation}
Next, we observe the identity
\begin{align*}
0&\leq b^2 \bigl( x+y+\tfrac{11}{5} - \tfrac{2(n-4)}{5} b \bigr)^2 + (2b+(n-4) b^2) ( x + 2 + \tfrac{4}{5} b) (y+2+\tfrac{4}{5} b) \\
    &= b^2 (x^2+y^2-xy) + (2b+(n-1) b^2)(x+2)(y+2) - \tfrac{1}{25} b^2 [ 19 - 4 (9n-28) b - 4n(n-4) b^2 ].
\end{align*}
Combining this inequality with~\eqref{eqn:bound-DS-from-psi} and using $x,y \geq - 2(1 + \frac{2}{5} b)$, we deduce that
\[
L(b) \geq \tfrac{1}{50 n^2} b^2 [ 19 - 4 (9n-28) b - 4n(n-4) b^2 ].
\]
Finally, for $0 \leq b \leq b_1(n) = \frac{1}{4(n-2)}$, we notice that
\begin{align*}
    19 - 4 (9n-28) b - 4n(n-4) b^2 &\geq 19 - \tfrac{9n-28}{n-2} - \tfrac{n(n-4)}{4(n-2)^2}  = \tfrac{39}{4} + \tfrac{10}{n-2} + \tfrac{1}{(n-2)^2} > \tfrac{39}{4}.
\end{align*}
Consequently, $L(b) \geq \frac{39}{200 n^2} b^2 \geq \frac{1}{6n^2}b^2$.
This completes the proof of our assertion.
\end{proof}

\begin{corollary}\label{cor:interior-of-pic-cone}
For every tensor $S \in \cE_1(b) \setminus \{ 0 \}$, we have $D_{a,b}(S) \in \textup{int}( C_{\textup{PIC}})$.
Moreover, we recall the term $\kappa$ from~\eqref{eqn:kappa} and define the function $\mu_b(S)$ by
\begin{equation}\label{eqn:epsilon-S-general-definition}
    \mu_b(S) := \kappa \sigma^{-2} |\overset{\circ}{\textup{Ric}}(S)|^2 + \tfrac{1}{12 n^2} b^2.
\end{equation}
Then, it holds that
\[
D_{a,b}(S) - \mu_b(S)  \, \textup{scal}(S)^2 \textup{id} \owedge \textup{id} \in  C_{\textup{PIC}}.
\]
\end{corollary}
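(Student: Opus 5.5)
The idea is to evaluate the PIC expression $Z_1(\,\cdot\,;F)$ of $D_{a,b}(S)$ term by term in~\eqref{eqn:D-a-b} on an arbitrary orthonormal four-frame $F=\{e_1,e_2,e_3,e_4\}$. The resulting lower bound should be exactly the quantity controlled in Lemma~\ref{lemma:DS(a,b)-lower bound}, plus the $\kappa$-term.

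First I record the Kulkarni--Nomizu evaluations. For symmetric $A,B$, write $(A\owedge B)_{1313}=A_{11}B_{33}+A_{33}B_{11}-2A_{13}B_{13}$, and similarly for the other sectional components. The mixed term $-2(A\owedge B)_{1234}$ contributes $-2(A_{13}B_{24}+A_{24}B_{13}-A_{14}B_{23}-A_{23}B_{14})$. For $A=B=h$ this gives
\[
Z_1(h\owedge h)=2(h_{11}+h_{22})(h_{33}+h_{44})-2\bigl[(h_{13}-h_{24})^2+(h_{14}+h_{23})^2\bigr].
\]
For $A=h^2$, $B=\textup{id}$ it gives $Z_1(h^2\owedge\textup{id})=2\sum_{i=1}^4(h^2)_{ii}$. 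Finally $Z_1(\textup{id}\owedge\textup{id})=8$.

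Next I apply this with $h=\overset{\circ}{\textup{Ric}}(S)$ and with $\textup{Ric}(S)$; the two share off-diagonal entries. The coefficients in~\eqref{eqn:D-a-b} are $2(b+\tfrac{n-2}{2}b^2-a)=-b^2$ (using $a=b+\tfrac{n-1}{2}b^2$), $2a$, and $2b^2$. Collecting terms, the off-diagonal squares appear with total coefficient $2b^2-4a$. This is negative, so they cannot simply be dropped. I would handle them by choosing the frame cleverly. Alternatively, I would bound $(h_{13}-h_{24})^2+(h_{14}+h_{23})^2 \le 2\sum_{i,j\le4,\,i\neq j}h_{ij}^2$ and absorb this into the $2b^2\sum_{i\le 4}(h^2)_{ii}$ term. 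That absorption looks too weak, since $a\gg b^2$.

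The better route is the one Lemma~\ref{lemma:DS(a,b)-lower bound} suggests. Its proof works at the four-frame minimizing the expression, so I would instead minimize $Z_1(D_{a,b}(S);F)$ over frames. At a minimizing frame, the first variation along rotations in the planes $e_1e_3$, $e_1e_4$, $e_2e_3$, $e_2e_4$ should force the Ricci cross terms to vanish, or at least combine favorably. The alternative is the standard trick from~\cite{brendle-PIC}: replace the frame by $\{e_1,e_2,e_3,\pm e_4\}$ and the rotated frames, and average. Averaging over the $\pm$ sign choices and the orthogonal rotations in the $(e_1,e_2)$ and $(e_3,e_4)$ planes kills the off-diagonal mixed terms $h_{13}h_{24}$, etc., while leaving the diagonal quantities invariant. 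Since PIC is a pointwise-in-frame condition, I still need a lower bound at each frame. So I expect the argument to go through the minimizing frame together with second-variation (or first-variation) identities.

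Once the off-diagonal terms are controlled, the diagonal part equals
\[
4\Bigl[b^2\sum_{i=1}^4(h^2)_{ii}-\tfrac12b^2(h_{11}+h_{22})(h_{33}+h_{44})+a(\textup{Ric}_{11}+\textup{Ric}_{22})(\textup{Ric}_{33}+\textup{Ric}_{44})\Bigr]+8\kappa|\overset{\circ}{\textup{Ric}}|^2 .
\]
By Lemma~\ref{lemma:DS(a,b)-lower bound}, this is at least $\tfrac{4b^2}{6n^2}\sigma^2+8\kappa|\overset{\circ}{\textup{Ric}}|^2$. Subtracting $\mu_b(S)\sigma^2 Z_1(\textup{id}\owedge\textup{id})=8\kappa|\overset{\circ}{\textup{Ric}}|^2+\tfrac{8b^2}{12n^2}\sigma^2$ leaves exactly zero, so the second assertion follows. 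The first assertion follows because $\kappa>0$ by Lemma~\ref{lemma:important-properties}(i) and $\textup{id}\owedge\textup{id}\in\textup{int}(C_{\textup{PIC}})$.

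The main obstacle is the off-diagonal Ricci terms. I would try first to show that, at a frame minimizing $Z_1(D_{a,b}(S);\cdot)$, one may assume $\textup{Ric}(S)$ is block-compatible, i.e. $\textup{Ric}_{13}=\textup{Ric}_{24}$ and $\textup{Ric}_{14}=-\textup{Ric}_{23}$ vanish in the relevant combinations. This would use the first-order conditions, as in~\cite{brendle-PIC}*{Lemma 3.6}.
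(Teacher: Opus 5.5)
Your reduction matches the paper's. The decomposition $D_{a,b}(S)=2V+\kappa|\overset{\circ}{\textup{Ric}}(S)|^2\,\textup{id}\owedge\textup{id}$, the coefficients $-b^2$, $2a$, $2b^2$, the values $Z_1(h^2\owedge\textup{id};F)=2\sum_{i\le4}(h^2)_{ii}$ and $Z_1(\textup{id}\owedge\textup{id})=8$, the appeal to Lemma~\ref{lemma:DS(a,b)-lower bound}, the cancellation $\frac{4}{6n^2}=\frac{8}{12n^2}$, and the interior argument are all correct.

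There is a genuine gap at the one non-bookkeeping step, which you flag but do not close: the off-diagonal Ricci terms, which enter with the negative coefficient $2b^2-4a$. The mechanism you propose, first-order conditions at a frame minimizing $Z_1(D_{a,b}(S);\cdot)$, cannot force them to vanish. The reason is that $Z_1(T;F)$ is constant, for every $T$ at once, along a whole $U(2)$-family of frames (described below). So minimizing frames come in such families, the first variation in those directions is identically zero, and the cross terms are typically nonzero at a member of the family. Averaging fails too, as you note.

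The missing idea is to use that freedom directly, frame by frame; this is the paper's ``without loss of generality'', following Lemma 3.5 of \cite{brendle-PIC}. Set $z=e_1+ie_2$ and $w=e_3+ie_4$. Extending complex-bilinearly, one checks that $Z_1(T;F)=T(z,w,\bar z,\bar w)$ for every $T\in S^2(\fs\fo(n))$. The plane $\Sigma=\textup{span}_{\mathbb{C}}\{z,w\}$ is totally isotropic. Hence for any $\Phi$ unitary on $\Sigma$ with respect to $g(u,\bar v)$, the real and imaginary parts of $z'=\Phi z$ and $w'=\Phi w$ form a new orthonormal four-frame $F'$. Since $z'\wedge w'=\det(\Phi)\,z\wedge w$, we get $Z_1(T;F')=Z_1(T;F)$ for all $T$. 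Now choose $\Phi$ to diagonalize the Hermitian form $\textup{Ric}(S)(u,\bar v)$ on $\Sigma$. Then $\textup{Ric}(S)(z',\bar w')=0$, which says $\textup{Ric}(S)_{13}+\textup{Ric}(S)_{24}=0$ and $\textup{Ric}(S)_{14}-\textup{Ric}(S)_{23}=0$ in $F'$. No minimization or variation is needed.

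Note also a sign slip. Since $2(h\owedge h)_{1342}+2(h\owedge h)_{1423}=4(h_{14}h_{23}-h_{13}h_{24})$, the correct identity is $Z_1(h\owedge h;F)=2(h_{11}+h_{22})(h_{33}+h_{44})-2\bigl[(h_{13}+h_{24})^2+(h_{14}-h_{23})^2\bigr]$. Your combination $(h_{13}-h_{24})^2+(h_{14}+h_{23})^2$ belongs to the frame $\{e_1,e_2,e_3,-e_4\}$, so the normalization you wrote at the end is the wrong one for the fixed frame.

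With the correct normalization, the cross terms drop out exactly from $Z_1(\overset{\circ}{\textup{Ric}}\owedge\overset{\circ}{\textup{Ric}})$ and $Z_1(\textup{Ric}\owedge\textup{Ric})$, since the two tensors share off-diagonal entries. The term $\sum_{i\le4}(\overset{\circ}{\textup{Ric}}^2)_{ii}$ depends only on the real four-plane, and Lemma~\ref{lemma:DS(a,b)-lower bound} holds in any frame, so the rest of your argument goes through verbatim in $F'$.
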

\begin{proof}
We let $\sigma := \textup{scal}(S)$ and use $a = b + \frac{n-1}{2} b^2$ to define the term
\[
V = - \tfrac{1}{2} b^2 \, \overset{\circ}{\textup{Ric}}(S) \owedge \overset{\circ}{\textup{Ric}}(S) + ( b + \tfrac{n-1}{2} b^2) \, \textup{Ric}(S) \owedge \textup{Ric}(S) + b^2 \, \overset{\circ}{\textup{Ric}}(S)^2 \owedge \textup{id} \, . 
\]
Then, the expression~\eqref{eqn:D-a-b} gives $D_{a,b}(S) = 2 \, V + \kappa \, |\overset{\circ}{\textup{Ric}}(S)|^2 \, \textup{id} \owedge \textup{id}$.
We recall the notation $Z_1(T;F)$ from~\eqref{eqn:I(T;F)} for the isotropic curvature functional.
Then, our claim will follow from
\[
Z_1(D_{a,b}(S) - \mu_b(S) \, \textup{scal}(S)^2 \, \textup{id} \owedge \textup{id} ; F) > 0
\]
for every orthonormal four-frame $\{ e_1, e_2, e_3, e_4\}$.
Since $( \textup{id} \owedge \textup{id})_{1p1p} = ( \textup{id} \owedge \textup{id})_{2p2p} = 2$, we have $Z_1(\textup{id} \owedge \textup{id}) = 8$, so it suffices to prove that
\begin{equation}\label{eqn:epsilon-S(b)}
\mu_b(S) \leq \tfrac{1}{8} \sigma^{-2} Z_1( D_{a,b}(S); F)
\end{equation}
for every orthonormal four-frame.
Following~\cite{brendle-PIC}*{Lemma 3.5}, we fix a frame $F= \{ e_1, e_2, e_3, e_4 \}$ and assume, without loss of generality, that
\[
\textup{Ric}(S)_{13} + \textup{Ric}(S)_{24} = 0, \qquad \textup{Ric}(S)_{14} - \textup{Ric}(S)_{23} = 0.
\]
In this frame, we compute
\begin{align*}
    Z_1(V;F) &= 2 b^2 \sum_{i=1}^4 ( \overset{\circ}{\textup{Ric}}(S)^2)_{ii} - b^2  \bigl( \overset{\circ}{\textup{Ric}}(S)_{11} + \overset{\circ}{\textup{Ric}}(S)_{22} \bigr) \bigl( \overset{\circ}{\textup{Ric}}(S)_{33} + \overset{\circ}{\textup{Ric}}(S)_{44} \bigr) \\
    & \quad + (2b + (n-1) b^2 ) \, \bigl( \textup{Ric}(S)_{11} + \textup{Ric}(S)_{22} \bigr) \bigl( \textup{Ric}(S)_{33}+ \textup{Ric}(S)_{44} \bigr).
\end{align*}
Thus, $\frac{1}{2 \, \textup{scal}(S)^2} Z_1(V;F)$ is precisely the quantity considered in Lemma~\ref{lemma:DS(a,b)-lower bound}.
Therefore,  
\[
Z_1(V;F) \geq \tfrac{1}{3n^2} b^2 \, \textup{scal}(S)^2.
\]
Moreover, $Z_1(\kappa \, |\overset{\circ}{\textup{Ric}}(S)|^2 \, \textup{id} \owedge \textup{id} ; F) = 8 \kappa \, |\overset{\circ}{\textup{Ric}}(S)|^2$.
Combining these terms and taking the infimum over four-frames $\{ e_1, e_2, e_3, e_4 \}$, we find 
\begin{align*}
    Z_1 ( D_{a,b}(S);F) &= 2 \, Z_1(V;F) + Z_1 ( \kappa \, |\overset{\circ}{\textup{Ric}}(S)|^2 \, \textup{id} \owedge \textup{id} ; F) \\
    &\geq \tfrac{2}{3n^2} b^2 \textup{scal}(S)^2 +  8 \kappa \, |\overset{\circ}{\textup{Ric}}(S)|^2.
\end{align*}
This proves the claimed inequality~\eqref{eqn:epsilon-S(b)}, and hence our assertion.
\end{proof}

Given a decomposition $S = U + N \in \cE_1(b)$ as in Definition~\ref{def:first-cone-family}, where the tensors $U,N \in S^2(\fs \fo(n))$ are not required to satisfy the first Bianchi identity, we define the auxiliary evolution of $U,N$ by
\begin{equation}\label{eqn:d-dt-N-evolution}
\begin{split}
\tfrac{d}{dt} N &= S^2 + \tfrac{1}{2} \mu_b(S) \, \textup{scal}(S)^2 \, \textup{id} \owedge \textup{id}, \\
\tfrac{d}{dt} U &= S^{\#} + D_{a,b}(S) - \tfrac{1}{2}  \mu_b(S) \, \textup{scal}(S)^2 \, \textup{id} \owedge \textup{id}.
\end{split}
\end{equation}
In view of Corollary~\ref{cor:interior-of-pic-cone}, this evolution of $U$ preserves the PIC cone, due to 
\[
D_{a,b}(S) - \tfrac{1}{2}  \mu_b(S) \, \textup{scal}(S)^2 \, \textup{id} \owedge \textup{id} \in \textup{int} (C_{\textup{PIC}}).
\]
Furthermore, this evolution preserves the tensor decomposition $\frac{d}{dt} S = \frac{d}{dt} U + \frac{d}{dt} N$, so it is admissible.
The tensors $U,N \in S^2(\fs \fo(n))$ here are not required to satisfy the first Bianchi identity.

Using $( \textup{id} \owedge \textup{id})_{1p1p} = (\textup{id} \owedge \textup{id})_{2p2p} = 2$ and the expression for $\mu_b(S)$ from~\eqref{eqn:epsilon-S-general-definition}, we see that the evolution of the non-negative operator $N$ according to~\eqref{eqn:d-dt-N-evolution} satisfies
\begin{equation}\label{eqn:N1p1p-evolution}
    \begin{split}
         \frac{d}{dt} \sum_{p=3}^n (N_{1p1p} + N_{2p2p}) &= \sum_{p=3}^n \bigl( (S^2)_{1p1p} + (S^2)_{2p2p} \bigr) + 2(n-2) \mu_b(S) \, \textup{scal}(S)^2 \\
         &= \sum_{p=3}^n \bigl( (S^2)_{1p1p} + (S^2)_{2p2p} \bigr) + 2(n-2) \kappa \, |\overset{\circ}{\textup{Ric}}(S)|^2 + \frac{n-2}{6 n^2} b^2 \textup{scal}(S)^2
    \end{split}
    \end{equation}
We now obtain a lower bound for the right-hand side of~\eqref{eqn:N1p1p-evolution}.
\begin{lemma}\label{lemma:very-simple}
For every tensor $S \in S^2(\fs \fo(n))$, we have the inequality
\begin{equation}\label{eqn:simple-inequality}
\begin{split}
    & \sum_{p=3}^n ( (S^2)_{1p1p} + (S^2)_{2p2p}) \\
    &\geq \sum_{p=3}^n \Bigl( S_{2p2p} - S_{1p1p} - \frac{\textup{Ric}(S)_{22} - \textup{Ric}(S)_{11}}{n-2} \Bigr)^2 + \frac{( \textup{Ric}(S)_{22} - \textup{Ric}(S)_{11})^2}{n-2}  \\
    & \quad  + 2 \sum_{p=3}^n ( (S_{121p})^2 + (S_{212p})^2 ) + \frac{2}{n-3} \sum_{p=3}^n \Biggl[ \Biggl( \sum_{ \substack{ q=3 \\ q \neq p}}^n S_{q 1 qp} \Biggr)^2 + \Biggl( \sum_{ \substack{ q=3 \\ q \neq p}}^n S_{q 2 qp} \Biggr)^2 \Biggr] \\
    & \quad + \frac{1}{n-2} \bigl( 2 \, S_{1212} - \textup{Ric}(S)_{11} - \textup{Ric}(S)_{22} \bigr)^2.
\end{split}
\end{equation}
\end{lemma}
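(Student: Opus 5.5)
The inequality is purely algebraic. I would prove it by expanding $(S^2)_{1p1p}$ as a sum of squares and discarding or regrouping terms. With the normalization of $\cQ$ used here, I expect $(S^2)_{ijk\ell} = \sum_{r,s=1}^n S_{ijrs} S_{k\ell rs}$, summed over ordered pairs. Hence
\[
(S^2)_{1p1p} = \sum_{r,s=1}^n (S_{1prs})^2 = 2 \sum_{r<s} (S_{1prs})^2 ,
\]
and similarly for $(S^2)_{2p2p}$. The first step is to fix this normalization from the convention for $\#$ and $\cQ$ in \cite{bohm-wilking}. If it were instead the half-sum over $r<s$, every coefficient on the right-hand side of \eqref{eqn:simple-inequality} would be off by a factor $2$, so this check matters.

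From $2\sum_{r<s}(S_{1prs})^2$ I keep only the pairs $\{r,s\}$ equal to $\{1,p\}$, $\{1,2\}$, and $\{q,p\}$ with $q \geq 3$, $q \neq p$. The pair $\{2,p\}$ and all remaining pairs contribute non-negative squares and are dropped. I do the same for $(S^2)_{2p2p}$, keeping the pairs $\{2,p\}$, $\{1,2\}$ and $\{q,p\}$. This gives
\[
\sum_{p=3}^n \bigl( (S^2)_{1p1p} + (S^2)_{2p2p} \bigr) \geq \sum_{p=3}^n \Bigl[ 2 x_p^2 + 2 y_p^2 + 2 (S_{121p})^2 + 2 (S_{212p})^2 + 2 \sum_{q \neq p} \bigl( (S_{q1qp})^2 + (S_{q2qp})^2 \bigr) \Bigr],
\]
where $x_p := S_{1p1p}$ and $y_p := S_{2p2p}$. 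Here only the symmetries $S_{ijkl} = -S_{jikl} = S_{klij}$ of an element of $S^2(\fs\fo(n))$ are used, not the Bianchi identity. For instance $(S_{1p12})^2 = (S_{121p})^2$ and $(S_{1pqp})^2 = (S_{q1qp})^2$.

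The off-diagonal groups are handled directly. The terms $2(S_{121p})^2 + 2(S_{212p})^2$ appear verbatim in \eqref{eqn:simple-inequality}. For each $p$, Cauchy--Schwarz over the $n-3$ indices $q \in \{3,\dots,n\} \setminus \{p\}$ gives
\[
\sum_{q \neq p} (S_{q1qp})^2 \geq \tfrac{1}{n-3} \Bigl( \sum_{q \neq p} S_{q1qp} \Bigr)^2 ,
\]
and the same for the index $2$. This produces the $\frac{2}{n-3}$ terms.

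For the diagonal part I write $2x_p^2 + 2y_p^2 = (y_p - x_p)^2 + (x_p + y_p)^2$ and use
\[
\textup{Ric}(S)_{11} = S_{1212} + \sum_{p \geq 3} x_p, \qquad \textup{Ric}(S)_{22} = S_{1212} + \sum_{p \geq 3} y_p .
\]
Set $d_p := y_p - x_p$, so that $\sum_{p\ge 3} d_p = \textup{Ric}(S)_{22} - \textup{Ric}(S)_{11}$. The variance identity over the $n-2$ indices gives
\[
\sum_p d_p^2 = \sum_p \Bigl( d_p - \tfrac{\textup{Ric}(S)_{22} - \textup{Ric}(S)_{11}}{n-2} \Bigr)^2 + \tfrac{(\textup{Ric}(S)_{22} - \textup{Ric}(S)_{11})^2}{n-2},
\]
which is the first line of \eqref{eqn:simple-inequality}. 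Similarly $\sum_p (x_p + y_p) = \textup{Ric}(S)_{11} + \textup{Ric}(S)_{22} - 2 S_{1212}$, and Cauchy--Schwarz gives $\sum_p (x_p + y_p)^2 \geq \frac{1}{n-2}\bigl( 2 S_{1212} - \textup{Ric}(S)_{11} - \textup{Ric}(S)_{22} \bigr)^2$, the last line. Adding the three groups proves the claim. The only real obstacle is the normalization check in the first step; everything after it is bookkeeping of squares.
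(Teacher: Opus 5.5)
Your argument is correct and is essentially the paper's proof. It uses the same ordered-pair normalization $(S^2)_{1p1p}=2\sum_{r<s}(S_{1prs})^2$, the same retained pairs $\{1,p\}$, $\{1,2\}$, $\{q,p\}$, Cauchy--Schwarz in $q$, and the split $2x_p^2+2y_p^2=(y_p-x_p)^2+(x_p+y_p)^2$ followed by the variance identity and Cauchy--Schwarz.

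One small correction: $(S_{1pqp})^2=(S_{q1qp})^2$ is not a pointwise identity. In fact $S_{q1qp}=S_{1qpq}$, which appears in $(S^2)_{1q1q}$ rather than $(S^2)_{1p1p}$. Your displayed summed inequality therefore holds only after summing over $p$ and swapping the dummy indices $p\leftrightarrow q$ in the double sum over distinct $p,q\geq 3$. The paper makes this same relabeling implicitly when it says ``summing over $p$''.
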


\begin{proof}
Let us recall that 
\[
(S^2)_{ijk \ell} = \sum_{p,q=1}^n S_{ijpq} S_{k \ell pq}, \qquad (S^2)_{ijij} = \sum_{\alpha, \beta=1}^n (S_{ij\alpha \beta})^2 = 2 \sum_{1 \leq \alpha < \beta \leq n} (S_{ij \alpha \beta})^2.
\]
We therefore obtain, for every $3 \leq p \leq n$,
\[
(S^2)_{1p1p} = 2 \sum_{1 \leq \alpha < \beta \leq n} (S_{1 p \alpha \beta})^2 \geq 2 (S_{1p1p})^2 + 2 (S_{1q12})^2 + 2 \sum_{ \substack{q=3 \\ q \neq p} }^n (S_{1ppq})^2,
\]
and likewise for $(S^2)_{2p2p}$.
For $p \geq 3$, we can express $(S_{121p})^2 = (S_{1p12})^2$, and using the Cauchy-Schwarz inequality, we can bound
\[
\Biggl( \sum_{ \substack{ q=3 \\ q \neq p}}^n S_{q i qp} \Biggr)^2  \leq (n-3) \sum_{ \substack{ q = 3 \\ q \neq p } }^n (S_{iqqp})^2, \qquad \text{for } \; i \in \{ 1,2 \},
\]
Summing over $p \geq 3$ for $i \in \{ 1, 2\}$, we obtain
\begin{align*}
    \sum_{p=3}^n \bigl( (S^2)_{1p1p} + (S^2)_{2p2p} \bigr) &\geq 2 \sum_{p=3}^n \bigl( (S_{1p1p})^2 + (S_{2p2p})^2 \bigr) + 2 \sum_{p=3}^n [ (S_{121p})^2 + (S_{212p})^2] \\
    & \quad + \frac{2}{n-3} \sum_{p=3}^n \Biggl[ \Biggl( \sum_{ \substack{ q=3 \\ q \neq p}}^n S_{q 1 qp} \Biggr)^2 + \Biggl( \sum_{ \substack{ q=3 \\ q \neq p}}^n S_{q 2 qp} \Biggr)^2 \Biggr] .
\end{align*}
Moreover, for each $p \geq 3$, we can write
\[
2 (S_{1p1p})^2 + 2 (S_{2p2p})^2 = (S_{2p2p} - S_{1p1p})^2 + (S_{2p2p} + S_{1p1p})^2.
\]
Using the fact that $\sum_{p=3}^n ( S_{2p2p} - S_{1p1p}) = \textup{Ric}(S)_{22} - \textup{Ric}(S)_{11}$ and the Cauchy-Schwarz inequality, we can sum over all $3 \leq p \leq n$ to write
\begin{align*}
\sum_{p=3}^n \Bigl(S_{2p2p} - S_{1p1p} - \frac{\textup{Ric}(S)_{22} - \textup{Ric}(S)_{11}}{n-2} \Bigr)^2 &= \sum_{p=3}^n (S_{2p2p} - S_{1p1p})^2 - \frac{(\textup{Ric}(S)_{22} - \textup{Ric}(S)_{11})^2}{n-2}, \\
\sum_{p=3}^n (S_{1p1p} + S_{2p2p})^2 &\geq \frac{1}{n-2} \bigl( 2 \, S_{1212} - \textup{Ric}(S)_{11} - \textup{Ric}(S)_{22} \bigr)^2.
\end{align*}
In the last step, we used the fact that $\sum_{p=3}^n (S_{1p1p} + S_{2p2p}) = \textup{Ric}(S)_{11} + \textup{Ric}(S)_{22} - 2 \, S_{1212}$.
The claimed inequality~\eqref{eqn:simple-inequality} now follows from summing these terms.
\end{proof}

\begin{lemma}\label{lemma:bound-for-t-and-s}
For a tensor $S = U+N \in \cE_1(b)$ with $N \geq 0$, we form the quantity
\[
h := \textup{scal}(S)^{-1} \sum_{p=3}^n (N_{1p1p} + N_{2p2p}).
\]
Then, the variables
\[
\tau := \frac{\textup{Ric}(S)_{11} + \textup{Ric}(S)_{22}}{\textup{scal}(S)} - \frac{2}{n}, \qquad s := \frac{2 \, S_{1212}}{\textup{scal}(S)} - \Bigl( \tau + \frac{2}{n} \Bigr)
\]
satisfy the inequality
\[
        \frac{2(n-3) h - (n-1) (n-4) ( \tau + \frac{2}{n}) - 2}{n^2 - 7n + 14} \leq s \leq - h.
\]
\end{lemma}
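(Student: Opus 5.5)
The plan is to write $\sigma := \textup{scal}(S)$ and $r := \tau + \frac{2}{n} = \sigma^{-1}(\textup{Ric}(S)_{11} + \textup{Ric}(S)_{22})$. Everything reduces to linear inequalities for three symmetric quantities in the frame $\{e_1,\dots,e_n\}$:
\[
X_T := \sum_{p=3}^n (T_{1p1p} + T_{2p2p}), \qquad T_{1212}, \qquad Y_T := \sum_{3 \leq p<q \leq n} T_{pqpq},
\]
evaluated at $T \in \{S,U,N\}$. These are linear in $T$, so $X_S = X_U + X_N$, and similarly for the other two. For $S$ they are determined by $s,r,\sigma$. Contracting gives $X_S = \sigma r - 2S_{1212}$, and by definition $2S_{1212} = (s+r)\sigma$, so $X_S = -s\sigma$. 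Summing all sectional terms gives $\sigma = 2S_{1212} + 2X_S + 2Y_S$, hence $Y_S = \frac{\sigma}{2}(1 + s - r)$.

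\emph{Upper bound.} Since $h\sigma = X_N$, the inequality $s \leq -h$ is equivalent to $X_S \geq X_N$, that is, $X_U \geq 0$.
\begin{itemize}
\item For distinct $p,q \geq 3$, apply $Z_1(U;F) \geq 0$ to the frames $\{e_1,e_2,e_p,\pm e_q\}$ and average over the sign. This removes the mixed terms even though $U$ need not satisfy the Bianchi identity, exactly as in \eqref{eqn:Tijkl}. The result is $U_{1p1p}+U_{1q1q}+U_{2p2p}+U_{2q2q} \geq 0$.
\item Summing over all pairs $3 \leq p<q \leq n$ gives $(n-3)X_U \geq 0$.
\end{itemize}

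\emph{Lower bound.} Multiplying by $\sigma$, the claim becomes an upper bound for $X_N$:
\[
2(n-3) X_N \leq (n^2-7n+14)\, s\sigma + (n-1)(n-4)\, r\sigma + 2\sigma.
\]
Writing $X_N = X_S - X_U = -s\sigma - X_U$, it suffices to find a lower bound for $X_U$ in terms of $S$-data. I would produce one from further sign-averaged WPIC inequalities for $U$, each of type \eqref{eqn:Tijkl}:
\begin{itemize}
\item on $\{e_1,e_p,e_q,\pm e_r\}$ and $\{e_2,e_p,e_q,\pm e_r\}$ with $p,q,r \geq 3$, which involve $U_{1q1q}+U_{1r1r}+U_{pqpq}+U_{prpr}$;
\item on $\{e_1,e_p,e_2,\pm e_q\}$, which involve $U_{1212}+U_{1q1q}+U_{2p2p}+U_{pqpq}$.
\end{itemize}
Summing each family symmetrically over the indices gives inequalities of the form $\alpha_i X_U + \beta_i U_{1212} + \gamma_i Y_U \geq 0$ with explicit coefficients depending on $n$.

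Next, substitute $U = S - N$ in the $U_{1212}$ and $Y_U$ terms. Since $N \geq 0$, we have $N_{1212} \geq 0$ and $N_{pqpq} \geq 0$, so these $N$-terms can be discarded when they carry the favorable sign. The $S$-terms are then rewritten via the identities for $X_S$, $S_{1212}$, $Y_S$ above. A positive combination of these inequalities, together with the first one ($X_U \geq 0$), should give a linear bound $c_1 X_U \geq c_2 s\sigma + c_3 r\sigma + c_4\sigma$. This yields the stated bound after substituting into $X_N = -s\sigma - X_U$. If the $\sigma$-coefficient does not come out as $2$, I would also bring in condition \eqref{eqn:ric-11-first} or Lemma~\ref{lemma:consequences-of-PIC} applied to $S$. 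The scalar identity $\sigma = 2S_{1212}+2X_S+2Y_S$ also gives flexibility to trade the constant term against $s$ and $r$.

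The main obstacle is choosing the weights. They must produce exactly the coefficients $2(n-3)$, $n^2-7n+14$, $(n-1)(n-4)$ and $2$, while every discarded $N$-term keeps a favorable sign. I would fix the weights by matching coefficients of $X_U$, $U_{1212}$ and $Y_U$. The denominator $n^2-7n+14$ suggests the relevant combination eliminates $Y_U$ and $U_{1212}$ against each other, leaving $X_U$ controlled by $S_{1212}$ and $Y_S$.
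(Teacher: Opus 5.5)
Your proposal is correct and is essentially the paper's argument. The upper bound is proved exactly as you describe. For the lower bound, the paper uses only one family from your list: the sign-averaged WPIC inequality for $U$ on $\{e_1,e_p,e_2,\pm e_q\}$, summed over ordered pairs $p\neq q$ with unit weight. Discarding $N_{1212},N_{pqpq}\geq 0$ then gives $(n-3)X_N \leq (n-2)(n-3)S_{1212}+(n-3)X_S+2Y_S$. Substituting your identities $X_S=-s\sigma$, $2S_{1212}=(s+r)\sigma$, $2Y_S=(1+s-r)\sigma$ produces exactly the coefficients $n^2-7n+14$, $(n-1)(n-4)$ and $2$. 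So no other families, weight-tuning, or appeal to \eqref{eqn:ric-11-first} is needed.
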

\begin{proof}
Since $N \geq 0$ is non-negative as an operator on $\fs \fo(n)$, each diagonal entry of the form $N_{abab}$ is non-negative.
This implies that
\[
\sum_{3 \leq p \neq q \leq n} (N_{1212} + N_{1p1p} + N_{2q2q} + N_{pqpq}) \geq \sum_{3 \leq p \neq q \leq n} (N_{1p1p} + N_{2q2q} ) = (n-3) h \, \textup{scal}(S). 
\]
Moreover, because $U \in C_{\textup{PIC}}$, we can also bound
\begin{align*}
    (n-3) h \, \textup{scal}(S) &\leq \sum_{3 \leq p \neq q \leq n} (N_{1212} + N_{1p1p} + N_{2q2q} + N_{pqpq}) \\
    &\leq \sum_{3 \leq p \neq q \leq n} (S_{1212} + S_{1p1p} + S_{2q2q} + S_{pqpq}) \\
    &= (n-2) (n-3) S_{1212} + (n-4) \sum_{p=3}^n (S_{1p1p} + S_{2p2p}) + \sum_{p=3}^n \textup{Ric}(S)_{pp}.
\end{align*}
In terms of our notation, with $\sum_{p=3}^n \textup{Ric}(S)_{pp} = \sigma(1-\tau - \frac{2}{n})$, this implies the inequality
\[
(n-3) h \, \textup{scal}(S) \leq \tfrac{1}{2}(n^2 - 7n+14) s \, \textup{scal}(S) + \tfrac{1}{2}(n-1) (n-4) ( \tau + \tfrac{2}{n}) \, \textup{scal}(S) + \textup{scal}(S).
\]
Hence, rearranging terms proves the first bound above.

For the second bound, we use $U \in C_{\textup{PIC}}$ and average the PIC inequalities over four-frames of type $(e_1, e_2, e_p, \pm e_q)$, as in~\eqref{eqn:Tijkl}.
We then sum over $p<q$ to obtain
\[
- s \, \textup{scal}(S) = \sum_{p=3}^n (S_{1p1p} + S_{2p2p}) \geq \sum_{p=3}^n (N_{1p1p} + N_{2p2p}) = h \, \textup{scal}(S),
\]
hence $-s \geq h$.
This proves our assertion.
\end{proof}

\begin{proposition}\label{prop:evolution-second-barrier}
Consider a tensor $S = U+N \in \cE_1(b)$ with $\textup{scal}(S) > 0$.
Suppose that, for some orthonormal frame, we have
\[
\textup{Ric}(S)_{22} - \textup{Ric}(S)_{11} = \left[ \frac{\textup{scal}(S)^{\frac{1}{2}}}{(n-2) b} \sum_{p=3}^n (N_{1p1p} +N_{2p2p}) \right]^{\frac{1}{2}}.
\]
Then, the evolution~\eqref{eqn:d-dt-N-evolution} satisfies
\[
\frac{d}{dt} \left( \frac{\textup{scal}(S)^{\frac{1}{2}}}{ \sqrt{(n-2) b} } \Bigl( \sum_{p=3}^n (N_{1p1p} + N_{2p2p}) \Bigr)^{\frac{1}{2}}- ( \textup{Ric}(S)_{22} - \textup{Ric}(S)_{11}) \right) > 0.
\]
Consequently, the barrier~\eqref{eqn:S-define-ON} moves inward under the Hamilton ODE.
\end{proposition}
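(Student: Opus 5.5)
The plan is to compare the two sides at a touching point and reduce the inequality to the scalar estimate $\cF(\xi,s,\tau)\ge 0$ of Lemma~\ref{lemma:important-properties}(iii). Write $\sigma:=\textup{scal}(S)$, $Y:=\sum_{p=3}^n (N_{1p1p}+N_{2p2p})$ and $\Delta:=\textup{Ric}(S)_{22}-\textup{Ric}(S)_{11}\ge 0$.

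Since both sides are nonnegative at the touching point, it suffices to prove
\[
\frac{d}{dt}\Bigl(\frac{\sigma^{1/2}Y}{(n-2)b}\Bigr) > 2\Delta\,\frac{d}{dt}\Delta
\]
whenever $\Delta^2=\sigma^{1/2}Y/((n-2)b)$. We choose the frame realizing the maximum of $\textup{Ric}_{22}-\textup{Ric}_{11}$ minus the barrier. Then $e_1,e_2$ may be taken as eigenvectors of $\textup{Ric}(S)$ with $\textup{Ric}(S)_{12}=0$, and differentiating in this fixed frame gives an upper bound for the derivative of the worst frame (first-variation argument, as in~\cite{brendle-PIC}*{Prop.~3.10}).

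For the left-hand side we combine three ingredients:
\begin{itemize}
\item $\frac{d}{dt}\sigma=P|\textup{Ric}(S)|^2+Q\sigma^2$, from~\eqref{eqn:ddt-scal};
\item the evolution~\eqref{eqn:N1p1p-evolution} of $Y$;
\item Lemma~\ref{lemma:very-simple}, which bounds $\sum_p((S^2)_{1p1p}+(S^2)_{2p2p})$ below by a sum of squares, plus the terms $2(n-2)\kappa|\overset{\circ}{\textup{Ric}}(S)|^2$ and $\frac{n-2}{6n^2}b^2\sigma^2$.
\end{itemize}

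For the right-hand side we use~\eqref{eqn:evolve-ric-S}:
\[
\frac{d}{dt}\Delta = 2\bigl[(S\ast\textup{Ric})_{22}-(S\ast\textup{Ric})_{11}\bigr]-4b(\lambda_2^2-\lambda_1^2)+\tfrac{8}{n}\bigl(b+\tfrac{n-2}{2}a\bigr)\sigma\Delta .
\]
We expand $(S\ast\textup{Ric})_{22}-(S\ast\textup{Ric})_{11}$ in an eigenframe of $\textup{Ric}(S)$ completing $\{e_1,e_2\}$. This gives $\Delta(S_{1212}\text{-terms})+\sum_{p\ge3}(S_{2p2p}-S_{1p1p})\lambda_p$. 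We then split
\[
S_{2p2p}-S_{1p1p}=\Bigl(S_{2p2p}-S_{1p1p}-\tfrac{\Delta}{n-2}\Bigr)+\tfrac{\Delta}{n-2}.
\]
The fluctuating part is absorbed by Cauchy--Schwarz/Young against the first square in~\eqref{eqn:simple-inequality}, weighted by the factor $\sigma^{1/2}/((n-2)b)$. This costs a term of order $b\,\Delta\sum_p(\lambda_p-\bar\lambda)^2/\sigma^{1/2}$, which is controlled using~\eqref{eqn:ric-0-inequality} and the eigenvalue bounds of Lemma~\ref{lemma:S-curvature-optimal-bound}. The $S_{1212}$ contributions are recorded through the variable $s$, and $\textup{Ric}_{11}+\textup{Ric}_{22}$ through $\tau$, both as in Lemma~\ref{lemma:bound-for-t-and-s}. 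The last square in~\eqref{eqn:simple-inequality} is exactly $\frac{\sigma^2 s^2}{n-2}$, up to the normalization.

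Next we normalize by a suitable power of $\sigma$ and set $\xi\propto\Delta/\sigma$ (scaled so that $Y/\sigma=(n-2)b\,\xi^2$ at the touching point). Everything then becomes a polynomial in $(\xi,s,\tau)$, and the desired strict inequality should reduce to $\cF(\xi,s,\tau)\ge 0$ plus the strictly positive remainder $\frac{n-2}{6n^2}b^2$. The hypotheses of Lemma~\ref{lemma:important-properties}(iii) are checked as follows:
\begin{itemize}
\item $\tau\ge-\frac2n(1+\frac25 b)$ follows from~\eqref{eqn:ric-11-first};
\item the lower bound for $s$ comes from Lemma~\ref{lemma:bound-for-t-and-s} with $h=(n-2)b\xi^2$.
\end{itemize}
Where the bad term $-4b(\lambda_2^2-\lambda_1^2)$ and the $\lambda_3$ terms remain uncontrolled, for instance when $\lambda_1+\lambda_2$ sits on the face $-\frac{4}{5n}b\sigma$, we invoke the cubic in Lemma~\ref{lemma:important-properties}(ii), with $z$ measuring $\lambda_3$ (or $\Delta$) relative to $\sigma$. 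We also use the first inequality of Lemma~\ref{lemma:S-curvature-optimal-bound}, which replaces $S$-weights by $N$-weights, i.e.\ by $Y$.

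The main obstacle is the bookkeeping in this last reduction. The coefficients of the Young splittings must be chosen exactly so that the quadratic terms collapse into $\cF$, with $\frac{P}{4}\xi^4$ coming from $\frac{d}{dt}\sigma$ and $\frac{\xi^2+s^2}{2(n-2)^2b}$ from the squares in Lemma~\ref{lemma:very-simple}. I would first fix those weights by matching the $\xi^4$ and $s^2$ coefficients, then check that the linear-in-$\tau$ terms agree with $\bigl(4b+\frac{n}{n-2}\bigr)\tau$.
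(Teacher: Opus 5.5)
There is a genuine gap at your first reduction: you cannot assume $e_1,e_2$ are eigenvectors of $\textup{Ric}(S)$ on this face. The right-hand side of \eqref{eqn:S-define-ON} depends on the two-plane $\Pi=\textup{span}\{e_1,e_2\}$ through $N$. The quantity $Y=\sum_{p\ge3}(N_{1p1p}+N_{2p2p})$ is invariant under rotations inside $\Pi$ and inside $\Pi^\perp$. So at a touching frame you may arrange $\textup{Ric}(S)_{12}=0$ and $\textup{Ric}(S)_{pq}=0$ for $3\le p<q\le n$, and that is all the paper's proof arranges. Tilting $\Pi$ toward some $e_p$, however, changes $Y$ at first order by a contraction of $N$. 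The first-variation condition therefore balances $\textup{Ric}(S)_{1p}$ against that $N$-term and does not force $\textup{Ric}(S)_{1p}=\textup{Ric}(S)_{2p}=0$. The eigenvector reduction is legitimate for the other barrier \eqref{eqn:ric-11-first}, which involves only $\textup{Ric}(S)$.

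Consequently $\frac{d}{dt}(\textup{Ric}(S)_{22}-\textup{Ric}(S)_{11})$ contains terms your plan never controls:
\begin{itemize}
\item the cross terms $J_2=-4\sum_p S_{121p}\textup{Ric}(S)_{2p}$ and $J_3=4\sum_p S_{212p}\textup{Ric}(S)_{1p}$;
\item the off-diagonal part $4b\sum_p((\textup{Ric}(S)_{1p})^2-(\textup{Ric}(S)_{2p})^2)$ of the $-4b\,\textup{Ric}(S)^2$ term.
\end{itemize}
Handling these is the main extra work in the paper. An $(n-2)$-vector identity bounds $J_2,J_3$ by $\sum_p(S_{121p})^2$, $\sum_p(S_{212p})^2$, $\sum_p(S_{121p}-\textup{Ric}(S)_{2p})^2=\sum_p(\sum_{q\ne p}S_{q2qp})^2$ and its analogue. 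These are absorbed by the terms $2\sum_p((S_{121p})^2+(S_{212p})^2)$ and $\frac{2}{n-3}\sum_p[(\sum_{q\ne p}S_{q1qp})^2+(\sum_{q\ne p}S_{q2qp})^2]$ of \eqref{eqn:simple-inequality}, which your plan does not use. The leftover coefficient of $\sigma^{-2}\sum_p((\textup{Ric}(S)_{1p})^2+(\textup{Ric}(S)_{2p})^2)$ is
\[
4\Bigl[\frac{\kappa}{2b}+\frac{1}{4(n-2)^2b}-\Bigl(b+\frac{1}{n-2}\Bigr)\xi+\Bigl(\frac P4-(n-3)b\Bigr)\xi^2\Bigr].
\]
It is shown to be positive using the off-diagonal part of $|\overset{\circ}{\textup{Ric}}(S)|^2$ in \eqref{eqn:ricci-block} together with $b\le\frac{1}{4(n-2)}$.

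You also misplace several tools. Lemma~\ref{lemma:S-curvature-optimal-bound} and Lemma~\ref{lemma:important-properties}~$(ii)$ belong to the other face. They require $\lambda_1+\lambda_2=-\frac{4}{5n}b\,\textup{scal}(S)$, which is not available here, and they are not needed. The diagonal part of $-4b[(\textup{Ric}(S)^2)_{22}-(\textup{Ric}(S)^2)_{11}]$ is simply $-4b\sigma^2(\tau+\frac2n)\xi$, which produces the $4b\tau$ term in $\cF$. The face \eqref{eqn:ric-11-first} enters only through the constraint $\tau\ge-\frac2n(1+\frac25b)$.

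Likewise, the Young remainder $2(n-2)b\xi\sum_{p\ge3}[\textup{Ric}(S)_{pp}-\frac{\sigma(1-\tau-2/n)}{n-2}]^2$ is not controlled via \eqref{eqn:ric-0-inequality}. It is absorbed by the $\frac P2\xi^2\sigma^{-2}|\overset{\circ}{\textup{Ric}}(S)|^2$ contribution of $\frac{d}{dt}\textup{scal}(S)$, decomposed as in \eqref{eqn:ricci-block}, using $\frac P2>2(n-2)b$.

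Finally, fix the normalization. For the statement to be scale-invariant and match its own conclusion, read the touching condition as $(\textup{Ric}(S)_{22}-\textup{Ric}(S)_{11})^2=\textup{scal}(S)\,Y/((n-2)b)$, so that $\textup{Ric}(S)_{22}-\textup{Ric}(S)_{11}=\sigma\xi$ and $Y=(n-2)b\sigma\xi^2$. With your $\sigma^{1/2}$, the reduction to the homogeneous variables $(\xi,s,\tau)$ does not go through.
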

\begin{proof}
If $\textup{Ric}(S)_{22} - \textup{Ric}(S)_{11} = 0$, then the claim is automatic, by the strict positivity of the evolution of $N$ obtained in~\eqref{eqn:N1p1p-evolution}.
We henceforth assume $\textup{Ric}(S)_{22} - \textup{Ric}(S)_{11}>0$.
After rotating in $\textup{span} \{ e_1, e_2 \}$, assume that $\textup{Ric}(S)_{12} = 0$.
Next, we rotate the frame in $\textup{span} \{ e_3, \dots, e_n \}$ so that $\textup{Ric}(S)_{pq} = 0$ for $3 \leq p < q \leq n$.
For brevity, let us write $\sigma := \textup{scal}(S)$.
We introduce the variables
\begin{equation}\label{eqn:introduce-xi,s,t}
    \begin{split}
    \tau &:= \sigma^{-1}  (\textup{Ric}(S)_{11} + \textup{Ric}(S)_{22}) - \tfrac{2}{n}, \qquad s := 2 \sigma^{-1} S_{1212} - ( \tau + \tfrac{2}{n}), \\
    \xi &:= \left[ \frac{1}{(n-2) b \sigma} \sum_{p=3}^n (N_{1p1p} + N_{2p2p}) \right]^{\frac{1}{2}} = \sigma^{-1} ( \textup{Ric}(S)_{22} - \textup{Ric}(S)_{11}).        
    \end{split}
\end{equation}
The second equality for $\xi$ follows from our assumption.
Notice that
\begin{align}
    \sum_{p=3}^n \textup{Ric}(S)_{pp} &= \textup{scal}(S) - ( \textup{Ric}(S)_{11} + \textup{Ric}(S)_{22}) = \sigma \, ( 1 - \tau - \tfrac{2}{n}), \label{eqn:sum-lambda-p} \\
    (\tau + \tfrac{2}{n} ) \sigma &= \textup{Ric}(S)_{11} + \textup{Ric}(S)_{22}, \qquad \sum_{p=3}^n (S_{2p2p} - S_{1p1p}) = \textup{Ric}(S)_{22} - \textup{Ric}(S)_{11} = \sigma \xi. \label{eqn:S2p2p-S1p1p-trace}
\end{align}
Clearly $\xi \geq 0$.
We observe that the variables $s, \tau$ satisfy the bounds
\begin{equation}\label{eqn:key-s-tau-bounds}
\tau \geq - \frac{2}{n} \Bigl( 1 + \frac{2}{5} b \Bigr), \qquad s \geq \frac{2(n-2)(n-3) b \xi^2 - (n-1)(n-4) \tau - \frac{2(n-2)^2}{n}}{n^2 - 7n+14}.
\end{equation}
Indeed, the condition~\eqref{eqn:ric-11-first} implies that $\tau = - \frac{2}{n}+ \frac{\textup{Ric}(S)_{11} + \textup{Ric}(S)_{22}}{\textup{scal}(S)} \geq - \frac{2(1+ \frac{2}{5} b)}{n}$.
The bound for $s$ follows from the leftmost bound obtained in Lemma~\ref{lemma:bound-for-t-and-s}, here the term $h := \textup{scal}(S)^{-1} \sum_{p=3}^n (N_{1p1p} + N_{2p2p})$ in the notation of that lemma satisfies $h^2 = (n-2) b \xi^2$ due to the relation~\eqref{eqn:introduce-xi,s,t}.

Following the computation of~\cite{brendle-PIC}*{Proposition 3.12}, we can write
\[
\tfrac{d}{dt} ( \textup{Ric}(S)_{22} - \textup{Ric}(S)_{11}) = J_1 + J_2 + J_3 + J_4 + J_5 +  J_6
\]
where the terms $J_i$ are produced by the evolution equation~\eqref{eqn:evolve-ric-S}, and given by
\begin{align*}
J_1 &:= 2 \sum_{p=3}^n (S_{2p2p} - S_{1p1p}) \textup{Ric}(S)_{pp}, \\
J_2 &:= - 4 \sum_{p=3}^n S_{121p} \textup{Ric}(S)_{2p}, \\
J_3 &:= 4 \sum_{p=3}^n S_{212p} \textup{Ric}(S)_{1p}, \\
J_4 &:= - 2 S_{1212} ( \textup{Ric}(S)_{22} - \textup{Ric}(S)_{11}), \\
J_5 &:= 4b \bigl( ( \textup{Ric}(S)^2)_{11} - ( \textup{Ric}(S)^2)_{22} \bigr), \\
J_6 &:= \tfrac{4}{n} \bigl( 2b + (n-2) a \bigr) \, \textup{scal}(S) \, ( \textup{Ric}(S)_{22} - \textup{Ric}(S)_{11} ).
\end{align*}
We combine the expression~\eqref{eqn:sum-lambda-p} with Young's inequality to obtain
\begin{equation}\label{eqn:J1-estimate}
\begin{split}
    J_1 &= 2 \sum_{p=3}^n \Bigl( S_{2p2p} - S_{1p1p} - \frac{\sigma \xi }{n-2}\Bigr) \Bigl[ \textup{Ric}(S)_{pp} - \frac{\sigma \bigl(1 - \tau - \frac{2}{n} \bigr)}{n-2} \Bigr] + \frac{2\bigl( 1 - \tau - \frac{2}{n} \bigr)}{n-2} \sigma^2 \xi \\
    &\leq \frac{1}{2(n-2) b \xi} \sum_{p=3}^n \bigl(S_{2p2p} - S_{1p1p} - \frac{\sigma \xi}{n-2} \bigr)^2 \\
    & \quad+ 2(n-2) b \xi \sum_{p=3}^n \Bigl[ \textup{Ric}(S)_{pp} - \frac{\sigma \bigl( 1 - \tau - \frac{2}{n} \bigr)}{n-2} \Bigr]^2 + \frac{2 \bigl( 1 - \tau - \frac{2}{n} \bigr)}{n-2} \sigma^2 \xi.
\end{split}
\end{equation}
Next, we split the term $J_5$ into diagonal and off-diagonal components.
Since $\textup{Ric}(S)_{12} = 0$ and $\textup{Ric}(S)_{11} = \frac{1}{2} \sigma( \tau + \frac{2}{n} - \xi)$, while $\textup{Ric}(S)_{22} = \frac{1}{2} \sigma ( \tau + \frac{2}{n} + \xi)$, we obtain
\begin{align*}
    ( \textup{Ric}(S)^2)_{11} - ( \textup{Ric}(S)^2)_{22} &= ( \textup{Ric}(S)_{11})^2 - (\textup{Ric}(S)_{22})^2 + \sum_{p=3}^n (\textup{Ric}(S)_{1p})^2 - \sum_{p=3}^n (\textup{Ric}(S)_{2p})^2 \\
    &= - \sigma^2 \Bigl( \tau + \frac{2}{n} \Bigr) \xi + \sum_{p=3}^n (\textup{Ric}(S)_{1p})^2 - \sum_{p=3}^n (\textup{Ric}(S)_{2p})^2.
\end{align*}
This implies the bound
\begin{equation}\label{eqn:J5-formula}
    J_5 \leq - 4 b \sigma^2 \Bigl( \tau + \frac{2}{n} \Bigr) \xi + 4 b \sum_{p=3}^n (\textup{Ric}(S)_{1p})^2.
\end{equation}
Next, for $\epsilon = \pm 1$ and $(n-2)$-vectors $A,r$, we observe the identity
\begin{align*}
    0&\leq \frac{n-2}{n-3} \Bigl| A - \bigl( \frac{1}{n-2} + 2 (n-3) \xi \epsilon b \Bigr) r \Bigr|^2 \\
    &=|A|^2 + \frac{1}{n-3} |A-r|^2 + 4(n-2) b \xi \Bigl( \frac{4(n-2)b \xi \epsilon- 1}{4(n-2)^2 b \xi} + (n-3) b \xi \Bigr) |r|^2 - 4 (n-2) b \xi \epsilon \, \la A, r \rg .
\end{align*}
We can use this inequality to bound the terms $J_2, J_3$.
We first apply it to the $(n-2)$-vectors
\[
A = (S_{1213}, \dots, S_{121n}), \qquad r := ( \textup{Ric}(S)_{23}, \dots, \textup{Ric}(S)_{2n}), \qquad \epsilon = -1,
\]
for which $J_2 = 4 \epsilon \la A, r \rg$.
Our inequality implies the estimate
\begin{equation}\label{eqn:J2-inequality}
    \begin{split}
    J_2 &\leq \frac{1}{(n-2) b \xi} \sum_{p=3}^n (S_{121p})^2 + \frac{1}{(n-2)(n-3) b \xi} \sum_{p=3}^n ( S_{121p} - \textup{Ric}(S)_{2p})^2 \\
    & \quad + 4\left( (n-3) b \xi - \frac{1}{4(n-2)^2 b \xi} - \frac{1}{n-2} \right) \sum_{p=3}^n (\textup{Ric}(S)_{2p})^2.
    \end{split}
\end{equation}
Next, we apply this inequality to the $(n-2)$-vectors
\[
A = (S_{2123}, \dots, S_{212n}), \qquad r := ( \textup{Ric}(S)_{13}, \dots, \textup{Ric}(S)_{1n}), \qquad \epsilon = 1,
\]
for which $J_3 = 4 \epsilon \la A, r \rg$.
Our inequality implies the estimate
\begin{equation}\label{eqn:J3-inequality}
    \begin{split}
    J_ 3&\leq \frac{1}{(n-2) b \xi} \sum_{p=3}^n (S_{212p})^2 + \frac{1}{(n-2)(n-3) b \xi} \sum_{p=3}^n ( S_{212p} - \textup{Ric}(S)_{1p})^2 \\
    & \quad + 4\left( (n-3) b \xi - \frac{1}{4(n-2)^2 b \xi} + \frac{1}{n-2} \right) \sum_{p=3}^n (\textup{Ric}(S)_{1p})^2.
    \end{split}
\end{equation}
We recall that
\[
S_{121p} - \textup{Ric}(S)_{2p} = - \sum_{\substack{q = 3 \\ q \neq p}}^n S_{q2qp}, \qquad S_{212p} - \textup{Ric}(S)_{1p} = - \sum_{\substack{q = 3 \\ q \neq p}}^n S_{q1qp}.
\]
Thus, we can combine the bounds~\eqref{eqn:J1-estimate},
~\eqref{eqn:J2-inequality}, ~\eqref{eqn:J3-inequality}, and~\eqref{eqn:J5-formula} for $J_1,J_2,J_3$ and $J_5$ to obtain
\begin{equation}\label{eqn:J2J3J5}
    \begin{split}
    J_1 + J_2 + J_3 + J_5 &\leq  \frac{1}{2(n-2) b \xi} \sum_{p=3}^n \Bigl(S_{2p2p} - S_{1p1p} - \frac{\textup{Ric}(S)_{22} - \textup{Ric}(S)_{11}}{n-2} \Bigr)^2 \\
    & \quad+ 2(n-2) b \xi \sum_{p=3}^n \Bigl[ \textup{Ric}(S)_{pp} - \frac{\sigma \bigl( 1 - \tau - \frac{2}{n} \bigr)}{n-2} \Bigr]^2 + \frac{2 \bigl( 1 - \tau - \frac{2}{n} \bigr)}{n-2} \sigma^2 \xi \\
    & \quad + \frac{1}{(n-2) b\xi} \sum_{p=3}^n \bigl[ (S_{121p})^2 + (S_{212p})^2 \bigr] - 4 b \sigma^2 \Bigl( \tau + \frac{2}{n} \Bigr) \xi \\
    & \quad + \frac{1}{(n-2) (n-3) b\xi} \sum_{p=3}^n \Biggl[ \Biggl( \sum_{ \substack{ q=3 \\ q \neq p}}^n S_{q 1 qp} \Biggr)^2 + \Biggl( \sum_{ \substack{ q=3 \\ q \neq p}}^n S_{q 2 qp} \Biggr)^2 \Biggr] \\
    & \quad + 4\left( (n-3) b \xi - \frac{1}{4(n-2)^2 b \xi} + b + \frac{1}{n-2} \right) \sum_{p=3}^n \bigl[ (\textup{Ric}(S)_{1p})^2 + (\textup{Ric}(S)_{2p})^2 \bigr]      
    \end{split}
\end{equation}
We will absorb the first terms of the first, third, and fourth lines using the estimate~\eqref{eqn:simple-inequality} from Lemma~\ref{lemma:very-simple}, multiplied by $\frac{1}{2(n-2) b \xi}$.
In our present notation, the last term in the bound~\eqref{eqn:simple-inequality} equals
\[
\tfrac{1}{n-2} ( 2 \, S_{1212} - \textup{Ric}(S)_{11} - \textup{Ric}(S)_{22})^2 = \tfrac{1}{n-2} \sigma^2 s^2.
\]
The inequality also involves a term $\frac{(\textup{Ric}(S)_{22} - \textup{Ric}(S)_{11})^2}{n-2} = \frac{\sigma^2 \xi^2}{n-2}$.
Thus, the bound~\eqref{eqn:J2J3J5} becomes
\begin{align*}
    J_1 + J_2 + J_3 + J_5 &\leq \frac{1}{2(n-2) b \xi} \sum_{p=3}^n \bigl( (S^2)_{1p1p} + (S^2)_{2p2p} \bigr) + 2(n-2) b \xi \sum_{p=3}^n \Bigl[ \textup{Ric}(S)_{pp} - \frac{\sigma \bigl( 1 - \tau - \frac{2}{n} \bigr)}{n-2} \Bigr]^2 \\
    &\quad + 4 \left( (n-3) b \xi - \frac{1}{4(n-2)^2 b \xi} + b + \frac{1}{n-2} \right) \sum_{p=3}^n \Bigl( (\textup{Ric}(S)_{1p})^2 + (\textup{Ric}(S)_{2p})^2 \Bigr) \\
    & \quad + \Bigl[ \frac{2 \bigl( 1 - \tau - \frac{2}{n} \bigr)}{n-2} - 4 b \Bigl( \tau + \frac{2}{n} \Bigr) \Bigr] \sigma^2 \xi - \frac{\sigma^2 (\xi^2+s^2)}{2(n-2)^2 b \xi}.
\end{align*}
The remaining terms in the evolution of the barrier are $J_4 = - \sigma^2 (s + \tau + \frac{2}{n}) \xi$ and $J_6 = \frac{4}{n}(2b+(n-2) a) \sigma^2 \xi$.
Combining these estimates, we therefore obtain
\begin{equation}\label{eqn:evolution-upper-bound}
    \begin{split}
    &\frac{d}{dt} ( \textup{Ric}(S)_{22} - \textup{Ric}(S)_{11}) \\
    &= J_1 + J_2 + J_3 + J_4 + J_5 +  J_6 \\
    &\leq \frac{1}{2(n-2) b \xi} \sum_{p=3}^n \bigl( (S^2)_{1p1p} + (S^2)_{2p2p} \bigr) + 2(n-2) b \xi \sum_{p=3}^n \Bigl[ \textup{Ric}(S)_{pp} - \frac{\sigma \bigl( 1 - \tau - \frac{2}{n} \bigr)}{n-2} \Bigr]^2 \\
    &\quad + 4 \left( (n-3) b \xi - \frac{1}{4(n-2)^2 b \xi} + b + \frac{1}{n-2} \right) \sum_{p=3}^n \bigl[ (\textup{Ric}(S)_{1p})^2 + (\textup{Ric}(S)_{2p})^2 \bigr] \\
    & \quad + \Bigl[ \frac{2(1-\tau - \frac{2}{n})}{n-2} - s - (4b+1) \Bigl( \tau + \frac{2}{n} \Bigr) + \frac{4}{n} (2b+(n-2) a) \Bigr] \, \sigma^2 \xi - \frac{\sigma^2 (\xi^2+s^2)}{2 (n-2)^2 b \xi}.
    \end{split}
\end{equation}
On the other hand, since $\textup{Ric}(S)_{pq} =0$ for $3 \leq p < q \leq n$ and $\textup{Ric}(S)_{12} = 0$ by our choice of frame, the traceless Ricci tensor satisfies
\begin{equation}\label{eqn:ricci-block}
\begin{split}
    |\overset{\circ}{\textup{Ric}}(S)|^2 &= \sum_{p=3}^n \Bigl[ \textup{Ric}(S)_{pp} - \frac{\sigma (1-\tau - \frac{2}{n})}{n-2} \Bigr]^2 + \frac{1}{2} \xi^2 \sigma^2 + \frac{n}{2(n-2)} \tau^2 \sigma^2 \\
    & \quad + 2 \sum_{p=3}^n \bigl[ (\textup{Ric}(S)_{1p})^2 + (\textup{Ric}(S)_{2p})^2 \bigr].
\end{split}
\end{equation}
Next, we use $\xi^2 \, \textup{scal}(S) = \frac{1}{(n-2) b} \sum_{p=3}^n (N_{1p1p} + N_{2p2p})$ and the expression~\eqref{eqn:N1p1p-evolution} to compute
\begin{align*}
2 \xi \frac{d}{dt} ( \sigma \xi) &= \xi^2 \frac{d}{dt} \textup{scal}(S) + \frac{1}{(n-2) b} \frac{d}{dt} \sum_{p=3}^n (N_{1p1p} + N_{2p2p}) \\
&\geq \frac{1}{(n-2) b} \sum_{p=3}^n ( (S^2)_{1p1p} + (S^2)_{2p2p}) + \xi^2 \, [ P |\overset{\circ}{\textup{Ric}}(S)|^2 + ( \tfrac{1}{n} P + Q) \sigma^2] \\
& \quad + 2 \kappa b^{-1} \, |\overset{\circ}{\textup{Ric}}(S)|^2 + \tfrac{1}{6 n^2} b \, \textup{scal}(S)^2.
    \end{align*}
In the last step, we used the expressions~\eqref{eqn:ddt-scal} and~\eqref{eqn:N1p1p-evolution}, where $\kappa$ is defined in~\eqref{eqn:kappa}.
    We divide the preceding inequality by $2 \sigma^2$, subtract the bound~\eqref{eqn:evolution-upper-bound} multiplied by $\sigma^{-2} \xi$ from the resulting expression, and substitute $|\overset{\circ}{\textup{Ric}}(S)|^2$ from~\eqref{eqn:ricci-block}.
    Combining these expressions with~\eqref{eqn:evolution-upper-bound}, we arrive at
    \allowdisplaybreaks{
    \begin{align*}
    &    \xi \sigma^{-2} \frac{d}{dt} \left( \frac{\textup{scal}(S)^{\frac{1}{2}}}{ \sqrt{(n-2) b} } \Bigl( \sum_{p=3}^n (N_{1p1p} + N_{2p2p}) \Bigr)^{\frac{1}{2}}- ( \textup{Ric}(S)_{22} - \textup{Ric}(S)_{11}) \right) \\
    &\geq \frac{P \xi^4}{4} + \Bigl[ \frac{P}{2n} + \frac{Q}{2} + s + \frac{n}{4(n-2)} P \tau^2 + \Bigl( 4b + \frac{n}{n-2} \Bigr) \tau - \frac{4(n-2)}{n} a \Bigr] \, \xi^2  +\frac{\xi^2+s^2}{2(n-2)^2 b} \\
    & \quad + \frac{\kappa}{2b} \Bigl( \frac{n \tau^2}{n-2} + \xi^2 \Bigr) + \frac{b}{12n^2} + \Bigl[ \Bigl( \frac{P}{2} - 2(n-2) b \Bigr) \xi^2 + \frac{\kappa}{b} \Bigr] \, \sigma^{-2} \sum_{p=3}^n \Bigl[ \textup{Ric}(S)_{pp} - \frac{\sigma(1-\tau - \frac{2}{n})}{n-2} \Bigr]^2 \\
    & \quad + 4 \left[ \frac{\kappa}{2b} + \frac{1}{4(n-2)^2 b} - \Bigl( b + \frac{1}{n-2} \Bigr) \xi + \Bigl( \frac{P}{4} - (n-3) b \Bigr) \xi^2 \right]  \sigma^{-2} \sum_{p=3}^n [ (\textup{Ric}(S)_{1p})^2 + (\textup{Ric}(S)_{2p})^2 ].
\end{align*}}
We observe that the first line on the right-hand side of this expression is precisely the function $\cF(\xi, s, \tau)$ from Lemma~\hyperref[lemma:important-properties]{\ref{lemma:important-properties}~$(iii)$}, which satisfies $\cF(\xi,s,\tau) \geq 0$ for the variables $\xi,s,\tau$ in the range~\eqref{eqn:key-s-tau-bounds}.
Moreover, Lemma~\hyperref[lemma:important-properties]{\ref{lemma:important-properties}~$(i)$} shows that $\kappa>0$, and $\xi>0$ by our assumption.
Thus, we obtain the strictly positive combination
\[
\tfrac{1}{2} \kappa b^{-1} ( \tfrac{n}{n-2} \tau^2  +\xi^2) + \tfrac{1}{12 n^2} b > 0.
\]
Moreover, for $b \leq b_1(n) = \frac{1}{4(n-2)}$, the bound $P > \frac{7}{4}$ from Lemma~\hyperref[lemma:important-properties]{\ref{lemma:important-properties}~$(i)$} implies
\begin{align*}
    \tfrac{1}{2} P - 2(n-2) b &> \tfrac{1}{2} \cdot \tfrac{7}{4} - 2(n-2) \cdot \tfrac{1}{4(n-2)} > \tfrac{7}{8} - \tfrac{4}{8} > 0.
\end{align*}
Thus, the coefficient of the third line is positive.
Next, the factor appearing in the fourth line is a quadratic function of $\xi$, with leading coefficient
\[
\tfrac{1}{4} P - (n-3) b > \tfrac{1}{4} \cdot \tfrac{7}{4} - \tfrac{1}{4} \geq \tfrac{3}{16}.
\]
Therefore, for $n \geq 5$ and $b \leq b_1(n) = \frac{1}{4(n-2)}$, we can discard the term $\frac{\kappa}{2b}$ and observe that the full factor is bounded from below by
\begin{align*}
    \tfrac{1}{4(n-2)^2 b} - ( b + \tfrac{1}{n-2}) \xi + \tfrac{3}{16} \xi^2 &\geq \tfrac{1}{n-2} - \tfrac{5}{4(n-2)} \xi + \tfrac{3}{16} \xi^2 = \tfrac{3}{16} ( \xi - \tfrac{10}{3(n-2)} )^2 + \tfrac{12n-49}{12(n-2)^2} > 0.
\end{align*}
To summarize, we have proved that the right-hand side of this expression is strictly positive.
Consequently, the barrier~\eqref{eqn:S-define-ON} moves inward under the Hamilton ODE.
\end{proof}

\begin{proposition}\label{prop:transversally-invariant-family-1}
For every $n \geq 5$ and $0 < b \leq \frac{1}{4(n-2)}$, the family of cones $C_1(b)$ is transversally invariant under the Hamilton ODE.
\end{proposition}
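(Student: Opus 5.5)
Since $C_1(b)=\ell_{a,b}(\cE_1(b))$ and $\ell_{a,b}$ conjugates Hamilton's ODE $\frac{d}{dt}R=\cQ(R)$ to $\frac{d}{dt}S=\cQ(S)+D_{a,b}(S)$ by~\eqref{eqn:evolve-S}, it suffices to show that $\cE_1(b)$ is transversally invariant under the modified ODE. Concretely, at every $S\in\partial\cE_1(b)\setminus\{0\}$ the vector field $\cQ(S)+D_{a,b}(S)$ should point strictly into $\cE_1(b)$. As in~\cite{brendle-PIC}*{\S 3}, $\cE_1(b)$ is the intersection of two closed convex $O(n)$-invariant conditions, so the boundary splits into two types of faces. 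I would treat these separately: first the face where~\eqref{eqn:ric-11-first} is an equality, and then the face where~\eqref{eqn:S-define-ON} is an equality for some frame. Throughout, the decomposition $S=U+N$ is evolved by~\eqref{eqn:d-dt-N-evolution}, and the PIC condition on $U$ is preserved strictly. This follows because $S^{\#}$ preserves $C_{\textup{PIC}}$ (the Brendle--Schoen argument, applied to $U$ with the usual care that $S^{\#}$ is computed from $S$ rather than $U$, as in~\cite{brendle-PIC}). It also uses Corollary~\ref{cor:interior-of-pic-cone}, which puts $D_{a,b}(S)-\tfrac12\mu_b(S)\textup{scal}(S)^2\,\textup{id}\owedge\textup{id}$ in $\textup{int}(C_{\textup{PIC}})$. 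Finally, $N$ stays non-negative because $\frac{d}{dt}N=S^2+\tfrac12\mu_b\,\textup{scal}^2\,\textup{id}\owedge\textup{id}>0$.

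The face where~\eqref{eqn:S-define-ON} is an equality is handled directly by Proposition~\ref{prop:evolution-second-barrier}, which shows that this barrier moves strictly inward. So the remaining work is the two-Ricci face. Here I take an orthonormal pair with $\lambda_1+\lambda_2=-\tfrac{4}{5n}b\,\sigma$, where $\lambda_1\le\lambda_2$ are the smallest eigenvalues of $\textup{Ric}(S)$ and $\sigma=\textup{scal}(S)$. By~\eqref{eqn:evolve-ric-S} and~\eqref{eqn:ddt-scal}, the quantity
\[
\tfrac{d}{dt}\bigl(\textup{Ric}(S)_{11}+\textup{Ric}(S)_{22}+\tfrac{4}{5n}b\,\sigma\bigr)
\]
consists of several contributions. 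The first is the reaction term $2\sum_{p\ge3}(S_{1p1p}+S_{2p2p})\lambda_p$ plus $2S_{1212}(\lambda_1+\lambda_2)$. The others are $-4b(\lambda_1^2+\lambda_2^2)$, the term $\tfrac8n(b+\tfrac{n-2}{2}a)\sigma(\lambda_1+\lambda_2)$, the positive $|\overset{\circ}{\textup{Ric}}|^2$ and $\sigma^2$ multiples of $\textup{id}$, and $\tfrac{4}{5n}b(P|\textup{Ric}|^2+Q\sigma^2)$.

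To handle the reaction term, I rewrite it as $2\sum_{p\ge3}(S_{1p1p}+S_{2p2p})(\lambda_p-\tfrac{\lambda_1+\lambda_2}{2})$ plus a multiple of $(\lambda_1+\lambda_2)$. I then apply the first inequality of Lemma~\ref{lemma:S-curvature-optimal-bound}, which bounds this below by $2\bigl(\sum_p N_{1p1p}+N_{2p2p}\bigr)(\lambda_3-\tfrac{\lambda_1+\lambda_2}{2})$. To bound the $N$ term I use the barrier~\eqref{eqn:S-define-ON}, which gives
\[
\sum_p N_{1p1p}+N_{2p2p}\ge (n-2)b\,\sigma^{-1/2}\ldots
\]
that is, at least $(n-2)b(\lambda_2-\lambda_1)^2/\sigma$. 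Normalizing by $\sigma^2$ and setting $z$ proportional to $\lambda_3/\sigma$ (or to $(\lambda_2-\lambda_1)/\sigma$), I then use~\eqref{eqn:ric-0-inequality} to bound $|\overset{\circ}{\textup{Ric}}|^2$ from below. The bounds on $\lambda_1$, $\lambda_3$ and $\lambda_3-\lambda_1$ from Lemma~\ref{lemma:S-curvature-optimal-bound} control the remaining linear terms. Collecting coefficients, $K$ should appear as the effective coefficient in front of $|\overset{\circ}{\textup{Ric}}|^2/\sigma^2$ (combining $\tfrac2n\frac{n^2b^2-2(n-1)(a-b)(1-2b)}{1+2(n-1)a}$ with $\tfrac{4}{5n}bP$ via~\eqref{eqn:P-Q-kappa-identites}). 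The resulting lower bound should then be exactly the cubic of Lemma~\ref{lemma:important-properties}(ii), which is strictly positive for $z\ge0$.

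The main obstacle is this bookkeeping step: checking that the terms on the two-Ricci face collapse exactly into the cubic of Lemma~\ref{lemma:important-properties}(ii). This includes identifying $K$ and the constant $\tfrac{b(1+\frac25 b)(2+(5n-9)b)}{10n^2}$, and making sure the sign of $(\lambda_1+\lambda_2)$ times the $S_{1212}$ term is absorbed. For the latter, I would use $\lambda_1+\lambda_2<0$ together with $S_{1212}$ expressed through $s$ and the bound $s\le -h$ from Lemma~\ref{lemma:bound-for-t-and-s}. Once both faces move strictly inward, transversal invariance of $C_1(b)$ follows by pushing forward through $\ell_{a,b}$.
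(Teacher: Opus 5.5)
Your route is the paper's. You reduce to $\cE_1(b)$ under $\frac{d}{dt}S=\cQ(S)+D_{a,b}(S)$, evolve $U,N$ by \eqref{eqn:d-dt-N-evolution}, and dispose of the face \eqref{eqn:S-define-ON} with Proposition~\ref{prop:evolution-second-barrier}. On the face \eqref{eqn:ric-11-first} you combine the first inequality of Lemma~\ref{lemma:S-curvature-optimal-bound}, the barrier bound $h\geq (n-2)b\,(\lambda_2-\lambda_1)^2\sigma^{-2}$, and \eqref{eqn:ric-0-inequality}.

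The gap is in the step you defer as bookkeeping. Your claim that the resulting lower bound is exactly the cubic is not correct as stated. Set $x:=\frac{\lambda_2-\lambda_1}{2\sigma}$ and $z:=\frac{\lambda_3-\frac{1}{2}(\lambda_1+\lambda_2)}{\sigma}$. The estimates you list give
\[
\frac{1}{8\sigma^2}\frac{d}{dt}\Bigl(\textup{Ric}(S)_{11}+\textup{Ric}(S)_{22}+\tfrac{4}{5n}b\,\sigma\Bigr)\geq \bigl[(n-2)bz+K-b\bigr]x^2+K\Bigl[\tfrac{(1+\frac{2}{5} b)^2}{n(n-2)}+\tfrac{n-2}{2(n-3)}\Bigl(\tfrac{1+\frac{2}{5} b}{n-2}-z\Bigr)^2\Bigr]+\tfrac{b(1+\frac{2}{5} b)(2+(5n-9)b)}{10n^2}.
\]
This is a function of two variables. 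The term $(n-2)bx^2z$ comes from $\frac14 hz$, the term $-bx^2$ from $-4b(\lambda_1^2+\lambda_2^2)$, and $K/2$ is the normalized coefficient of $\sigma^{-2}|\overset{\circ}{\textup{Ric}}(S)|^2$, which absorbs the $2x^2$ part of \eqref{eqn:ric-0-inequality}.

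In the relevant range $K<b$ (to leading order $K\approx\frac{2b}{5n}$), so the coefficient of $x^2$ is negative when $z$ is small. Bounding $x$ only through Lemma~\ref{lemma:S-curvature-optimal-bound}, e.g.\ $x\leq\frac{1+\frac{2}{5}b}{n-2}$, does not suffice. The missing observation is the constraint $0\leq x\leq z$, which is just $\lambda_1\leq\lambda_2\leq\lambda_3$. The right-hand side is affine in $x^2$, so it suffices to check two endpoints. At $x=0$ it is positive because $K>0$. At $x=z$ it becomes exactly the cubic of Lemma~\ref{lemma:important-properties}~$(ii)$. Without this reduction, positivity of the cubic alone does not give the sign of the derivative.

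Two smaller points.

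First, the $S_{1212}$ term needs no absorption, and Lemma~\ref{lemma:bound-for-t-and-s} plays no role on this face. Since $\sum_{p\geq3}(S_{1p1p}+S_{2p2p})=\lambda_1+\lambda_2-2S_{1212}$, one has the identity
\[
2S_{1212}(\lambda_1+\lambda_2)+2\sum_{p=3}^n(S_{1p1p}+S_{2p2p})\lambda_p=2\sum_{p=3}^n(S_{1p1p}+S_{2p2p})\Bigl(\lambda_p-\frac{\lambda_1+\lambda_2}{2}\Bigr)+(\lambda_1+\lambda_2)^2.
\]
So the $S_{1212}$ contributions cancel exactly, leaving the positive constant $\frac{16}{25n^2}b^2\sigma^2$.

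Second, there are no leftover linear terms that would need the bounds on $\lambda_1$ or $\lambda_3-\lambda_1$. Every remaining term is an explicit function of $\lambda_1+\lambda_2=-\frac{4}{5n}b\sigma$, $\lambda_2-\lambda_1$, $|\overset{\circ}{\textup{Ric}}(S)|^2$ and $\sigma^2$. Using $\frac{1}{n}P+Q=\frac{2}{n}(1+(n-1)b)^2$, the constants collapse to the last term displayed above.
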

\begin{proof}
It suffices to prove that the cone $\cE_1(b)$ is transversally invariant under the modified ODE $\frac{d}{dt} S = \cQ(S) + D_{a,b}(S)$.
By Lemma~\ref{lemma:important-properties}, we have $\kappa>0$, so the scalar coefficient in the numerator of the second line of~\eqref{eqn:D-a-b} is positive.
Moreover, Corollary~\ref{cor:interior-of-pic-cone} shows that
\[
D_{a,b}(S) - \tfrac{1}{2} \mu_b(S) \, \textup{scal}(S)^2 \, \textup{id} \owedge \textup{id} \in \textup{int}(C_{\textup{PIC}}).
\]
We decompose $S= U + N$ in $S^2(\fs \fo(n))$, for $U \in C_{\textup{PIC}}$ and $N \geq 0$ a non-negative operator, which evolve according to the system~\eqref{eqn:d-dt-N-evolution}.
Hence, the results of~\cite{brendle-PIC}*{Lemmas 3.6 - 3.8} show that for any $S \in \cE_1(b) \setminus \{ 0\}$ and $N \geq 0$ with $U = S-N \in C_{\textup{PIC}}$, we have $\frac{d}{dt} N>0$ and $\tfrac{d}{dt} U \in \textup{int} ( T_U C_{\textup{PIC}})$ lies in the interior of the tangent cone of $C_{\textup{PIC}}$ at $U$.
Notably, the conditions $U \in C_{\textup{PIC}}$ are $N \geq 0$ are preserved under this evolution.

The properties $\kappa, P, Q > 0$ are obtained in Lemma~\hyperref[lemma:important-properties]{\ref{lemma:important-properties}~$(i)$}.
    Moreover, Proposition~\ref{prop:evolution-second-barrier} shows that the second barrier~\eqref{eqn:S-define-ON} moves inward under the Hamilton ODE.
    Thus, it remains to show that the same property holds for the barrier~\eqref{eqn:ric-11-first}.
    Indeed, let us suppose that
    \[
    \textup{Ric}(S)_{11} + \textup{Ric}(S)_{22} + \tfrac{4}{5n} b \, \textup{scal}(S) = 0
    \]
    for an orthonormal pair $\{ e_1, e_ 2\}$.
    By a standard reduction, we may assume that $e_1, e_2$ are eigenvectors of $\textup{Ric}(S)$ corresponding to the two smallest eigenvalues, which extend to an eigenbasis $\{ e_1, \dots, e_n\}$ with $\lambda_i := \textup{Ric}(S)_{ii}$.
    We introduce the variables
    \[
    \sigma := \textup{scal}(S) > 0, \qquad x := \frac{\lambda_2 - \lambda_1}{2 \sigma}, \qquad z := \frac{\lambda_3 - \frac{1}{2}  (\lambda_1 + \lambda_2)}{\sigma} = \frac{\lambda_3}{\sigma} + \frac{2}{5n} b. 
    \]
    Let $N \in S^2 ( \fs \fo(n))$ be the non-negative operator from Definition~\ref{def:first-cone-family}, and denote
    \[
    h := \textup{scal}(S)^{-1} \sum_{p=3}^n (N_{1p1p} + N_{2p2p}).
    \]
    Using the condition~\eqref{eqn:S-define-ON} and the upper bound for $\lambda_3$ from Lemma~\ref{lemma:S-curvature-optimal-bound}, we obtain
    \[
        0 \leq x \leq z \leq \tfrac{1+\frac{2}{5} b}{n-2}, \qquad h \geq 4(n-2) b x^2.
    \]
    Moreover, we can use Lemma~\ref{lemma:S-curvature-optimal-bound} to obtain
    \begin{align}
        \sum_{p=3}^n (S_{1p1p} + S_{2p2p} ) \Bigl( \lambda_p - \frac{\lambda_1+\lambda_2}{2} \Bigr) &\geq \sigma^2 hz \geq 4(n-2) b \sigma^2 x^2 z, \label{eqn:first-barrier-first-term}  \\
        \sigma^{-2} |\overset{\circ}{\textup{Ric}}(S)|^2 &\geq 2 x^2 + \frac{2( 1 + \frac{2}{5} b)^2}{n(n-2)} + \frac{n-2}{n-3} 
        \Bigl( \frac{1 + \frac{2}{5} b}{n-2} - z \Bigr)^2. \label{eqn:first-barrier-second-term}
    \end{align}
    Using the computation of~\cite{brendle-PIC}*{Lemma 3.9} with $a = b + \frac{n-1}{2} b^2$ and $\gamma = \frac{2}{5} b$, we obtain
    \begin{align*}
        &\frac{1}{8 \sigma^2} \frac{d}{dt} \bigl( \textup{Ric}(S)_{11} + \textup{Ric}(S)_{22} + \frac{4}{5n} b \, \textup{scal}(S) \bigr) \\
        &= \frac{1}{4 \sigma^2} \sum_{p=3}^n (S_{1p1p} + S_{2p2p}) \Bigl( \lambda_p - \frac{\lambda_1+\lambda_2}{2} \Bigr) + \frac{1 + \frac{2}{5} b}{2 n^2} \Bigl[(n-1) b + \frac{2}{5} (1-2b) \Bigr] b - b x^2 \\
& \quad + \left( \Bigl( 1 + \frac{2}{5}  b \Bigr) \frac{n^2 b^2 - (n-1)^2 b^2 (1-2b)}{2 n (1 + (n-1) b)^2} + \frac{b (1-2b)}{5n} \right) \sigma^{-2} |\overset{\circ}{\textup{Ric}}(S)|^2
    \end{align*}
In the next step, we apply the inequalities~\eqref{eqn:first-barrier-first-term} and~\eqref{eqn:first-barrier-second-term} on the right-hand side.
This preserves the direction of inequalities as all coefficients are non-negative.
The resulting expression is computed as
\begin{align*}
        & (n-2) b x^2 z + (K-b) x^2 + K \Bigl[ \frac{(1 + \frac{2}{5} b)^2}{n(n-2)} + \frac{n-2}{2(n-3)} \Bigl( \frac{1 + \frac{2}{5} b}{n-2} - z \Bigr)^2 \Bigr]  + \frac{b(1 + \frac{2}{5} b) ( 2 +(5n-9) b)}{10n^2}.
\end{align*}    
Here, $K$ is the term introduced in Lemma~\hyperref[lemma:important-properties]{\ref{lemma:important-properties}~$(ii)$}.

For fixed $z$, the right-hand side of the above expression is affine in $x^2$.
Since $0 \leq x \leq z$, it suffices to check $x=0$ and $x=z$; at $x=0$, Lemma~\hyperref[lemma:important-properties]{\ref{lemma:important-properties}~$(ii)$} gives $K>0$, so the expression is strictly positive.
At $x=z$, the right-hand side is precisely the cubic function considered in Lemma~\hyperref[lemma:important-properties]{\ref{lemma:important-properties}~$(ii)$}, so it is again strictly positive.
Therefore,
\[
\frac{d}{dt} \Bigl( \textup{Ric}(S)_{11} + \textup{Ric}(S)_{22} + \frac{4b}{5n} \textup{scal}(S) \Bigr) > 0.
\]
Hence, the first defining barrier~\eqref{eqn:ric-11-first} also moves inward. 
We conclude that $C_1(b)$ is transversally invariant under Hamilton's ODE.
\end{proof}

\section{A one-parameter family of invariant cones \texorpdfstring{$C_2(b)$}{C2(b)} that pinch toward PIC1}\label{section:second-family}

In this section, we construct another one-parameter family of cones $C_2(b)$ that are invariant under the Hamilton ODE. 
These cones pinch toward the PIC1 cone and can be joined continuously to the family $C_1(b)$ constructed in Section~\ref{section:first-family}. 
More precisely, we start from the cone $C_1(b_1) \cap C_{\textup{PIC1}}$  and deform it outward in such a way that the deformed cones are all preserved under the ODE. 
To that end, we consider curvature conditions that interpolate between the conditions PIC and PIC1.

We introduce the following definition.
\begin{definition}\label{def:a=b}
For $n \geq 5$, we define the parameters $b_2(n), A_n, B_n$ by
\begin{equation}\label{eqn:b2(n)-parameters}
b_2(n) := \begin{cases}
    \frac{1}{2(n-1)^2}, & n \in \{ 5, 6 \}, \\
    \frac{1}{2(n-2)^2}, & n \geq 7,
\end{cases} \qquad A_n := \begin{cases}
    0, & n \in \{ 5, 6\}, \\
    \frac{n-2}{2}, & n \geq 7,
\end{cases} \qquad B_n := \begin{cases}
    20, & n \in \{ 5, 6 \}, \\
    4, & n \geq 7.
\end{cases}
\end{equation}
Moreover, let $a(b) := b + A_n b^2$.

Given an algebraic curvature operator $T \in S^2_B(\fs \fo(n))$, an orthonormal four-frame $F = \{ e_1, e_2, e_3, e_4 \}$, and a $\lambda \in [0,1]$, we define the operators $Z_{\lambda}(S;F)$ and $\tilde{Z}_{\lambda}(S;F)$ by
\begin{align}
    Z_{\lambda}(S; F) &:= S_{1313} + \lambda^2 S_{1414} + S_{2323} + \lambda^2 \, S_{2424} - 2 \lambda \, S_{1234}, \label{eqn:Y-lambda(S)} \\
    \tilde{Z}_{\lambda}(S;F) &:= Z_{\lambda}(S;F) + (1-\lambda^2) \sqrt{2b} \, [ \textup{Ric}(S)_{11} + \textup{Ric}(S)_{22} + 2\, B_n b \, \textup{scal}(S) ] . \label{eqn:Z-tensor}
\end{align}
For $b \in (0,b_2(n)]$, we denote by $\cE_2(b)$ the set of algebraic curvature operators $S \in S^2_B( \fs \fo(n))$ such that $\ell_{a,b}(S) \in C_1(b_1)$ and $\tilde{Z}_{\lambda}(S; F) \geq 0$ for every orthonormal four-frame $F = \{ e_1, e_2, e_3, e_4 \}$ and every $\lambda \in [0,1]$.
Finally, we define $C_2(b) := \ell_{a,b}( \cE_2(b))$.
\end{definition}

We notice that every $S \in \cE_2(b) \setminus \{ 0 \}$ satisfies $\textup{scal}(S) > 0$.
We retain the parameters $a_1, b_1$ coming from the first family as follows:
\begin{equation}\label{eqn:a1-omega1-b1}
    b_1 := \tfrac{1}{4(n-2)}, \qquad a_1 := b_1 + \tfrac{n-1}{2} b_1^2 = \tfrac{9n-17}{32(n-2)^2}.
\end{equation}
The operator $T = \ell_{a_1, b_1}^{-1} \ell_{a,b}(S) \in \cE_1(b_1)$ is related to the operator $S$ by 
\begin{equation}\label{eqn:S-T-p}
    \begin{split}
        S &= T + p \, \textup{Ric}(T) \owedge \textup{id} + \tfrac{1}{n} ( q -p) \, \textup{scal}(T) \, \textup{id} \owedge \textup{id} \, , \\
        p &= \frac{b_1 - b}{1+(n-2) b}, \qquad q = \frac{a_1 - a}{1+2(n-1) a},
    \end{split}
\end{equation}
and has $T \in C_{\textup{PIC}}$ and $\textup{Ric}(T)_{11} + \textup{Ric}(T)_{22} + \frac{4}{5 n} b_1 \textup{scal}(T) \geq 0$.
Therefore, taking trace-free Ricci parts and scalar parts gives
    \begin{equation}\label{eqn:ric-scal-ST-equalities}
    \begin{split}
        (1 + (n-2)b) \overset{\circ}{\textup{Ric}}(S) = (1 + (n-2) b_1) \overset{\circ}{\textup{Ric}}(T) \quad & \implies \quad \overset{\circ}{\textup{Ric}}(S) = \frac{1 + (n-2) b_1}{1 + (n-2) b} \overset{\circ}{\textup{Ric}}(T), \\
        (1 + 2(n-1) a) \textup{scal}(S) = (1 + 2(n-1) a_1) \textup{scal}(T) \quad & \implies \quad \textup{scal}(S) = \frac{1 + 2(n-1) a_1}{1 + 2(n-1) a} \textup{scal}(T).
        \end{split}
    \end{equation}
We now collect some properties resulting from these definitions.
\begin{lemma}\label{lemma:properties-II}
We define $a = b + A_n b^2$ and $b_2(n)$ as above.
For every $n \geq 5$ and every $b \in (0,b_2(n)]$, the following properties are satisfied.
\begin{enumerate}[(i)]
    \item The function $\kappa$, defined in terms of $a,b$ in~\eqref{eqn:kappa}, is strictly positive.
    \item For every $S \in \cE_2(b)$, $\textup{Ric}(S)$ is weakly two-positive.
    \item For $n=5$, we have $8 q - 3p > 0$.
    For $n \geq 6$, we have $q > p$.
    Also, $q, p > 0$.
    \item The function $P$ defined in~\eqref{eqn:P(ab)Q(ab)} satisfies $P \in ( \frac{19}{10}, 2)$ for $b \in (0, b_2(n)]$.
    \item The function $Q$ defined in~\eqref{eqn:P(ab)Q(ab)} satisfies the lower bound
    \[
    Q(b) \geq (4 - \tfrac{1}{n-1}) b \quad \text{for } \; n \in \{ 5,6\}, \qquad Q(b) \geq 4b \quad \text{for } \; n \geq 7.
    \]
\end{enumerate}
\end{lemma}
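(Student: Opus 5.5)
The plan is to verify the five items by direct computation from the explicit formulas for $a,b,p,q$. I would prove them in the order $(i)$, $(iii)$, $(iv)$, $(v)$, and then $(ii)$, because $(ii)$ uses the relation~\eqref{eqn:S-T-p} together with the signs established in $(iii)$. Throughout, the only structural input is the smallness $b \leq b_2(n)$, which is much smaller than $b_1 = \frac{1}{4(n-2)}$. Each inequality will reduce to a polynomial inequality in $b$ that is monotone on $(0,b_2(n)]$. It then suffices to check it at $b=0$ (or at leading order) and at $b=b_2(n)$, treating $n\in\{5,6\}$ and $n\geq 7$ separately.

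For $(i)$, I substitute $a-b = A_n b^2$ into the numerator of~\eqref{eqn:kappa}. When $n\in\{5,6\}$ we have $A_n=0$, so the numerator is $nb^2(1-2b)>0$. When $n\geq7$ the numerator equals $b^2\bigl[2(1-2b) - n(n-2)b^2\bigr]$. Since $n(n-2)b^2\leq \frac{n}{4(n-2)^3}$ and $2b\leq \frac{1}{(n-2)^2}$, the bracket is clearly positive. For $(iii)$, the signs $p>0$ and $q>0$ follow from $b<b_1$ and $a(b)<a_1$; for the latter, use $a(b)\leq b+\tfrac{n-2}{2}b^2$ with $b\leq b_2\ll b_1$. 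For $q>p$ (respectively $8q>3p$ when $n=5$), I clear denominators, using $a_1 = \frac{9n-17}{32(n-2)^2}$ from~\eqref{eqn:a1-omega1-b1}. This gives
\[
(a_1-a)(1+(n-2)b) \;\text{vs}\; (b_1-b)(1+2(n-1)a).
\]
At $b=0$ the comparison is $a_1$ versus $b_1$. Note $a_1 = b_1\bigl(1+\tfrac{n-1}{8(n-2)}\bigr)>b_1$, with a quantitative margin that is weakest at $n=5,6$; this is why the weaker statement $8q>3p$ is recorded for $n=5$. The difference of the two sides is a polynomial of degree at most $3$ in $b$. Its $b$-derivative is of size $O(1)$ while $b\leq b_2=O(n^{-2})$, so the margin at $b=0$ persists up to $b_2(n)$, which I would check at the endpoint.

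For $(iv)$, the bound $P<2$ is equivalent to $(1+(n-2)b)^2<1+2(n-1)(b+A_nb^2)$. Expanding, this is $(n-2)^2b<2+2(n-1)A_nb$, which is obvious for $b\leq b_2$. For $P>\frac{19}{10}$, I bound $\frac{(1+(n-2)b)^2}{1+2(n-1)a}\geq \frac{1}{1+2(n-1)a}$ and use $2(n-1)a\leq 2(n-1)b_2(1+A_nb_2)$. This is at most about $\frac{1}{n-1}$ for $n\in\{5,6\}$, where it requires $(n-1)b\cdot 2 \le \frac{1}{4}$ roughly, and at most $O(n^{-1})$ in general. So the claim again reduces to an endpoint check at $b=b_2$, using the exact factor $(1+(n-2)b)^2$ in place of $1$ when $n=5,6$ if needed. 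For $(v)$, I factor the numerator of $Q$ as
\[
\bigl(2(n-1)a-(n-2)b\bigr)\bigl(2+2(n-1)a+(n-2)b\bigr), \qquad 2(n-1)a-(n-2)b = nb+2(n-1)A_nb^2 .
\]
For $n\in\{5,6\}$ this gives $Q = \frac{2b(2+(3n-4)b)}{1+2(n-1)b}$, and $Q\geq(4-\frac{1}{n-1})b$ reduces to an inequality that is linear in $b$. For $n\geq7$ the extra $A_n$ terms only increase the numerator relative to the denominator at order $b^2$, and the claim reduces to a quadratic inequality in $b$ that I check at $b_2$.

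Item $(ii)$ is the step I expect to be the main obstacle, since it needs a quantitative margin rather than a sign. I would write $T=\ell_{a_1,b_1}^{-1}\ell_{a,b}(S)\in\cE_1(b_1)$ and take the Ricci contraction of~\eqref{eqn:S-T-p}:
\[
\textup{Ric}(S)_{11}+\textup{Ric}(S)_{22} = (1+(n-2)p)\bigl(\textup{Ric}(T)_{11}+\textup{Ric}(T)_{22}\bigr) + 2\bigl[p+\tfrac{2(n-1)}{n}(q-p)\bigr]\textup{scal}(T).
\]
Then I insert the barrier $\textup{Ric}(T)_{11}+\textup{Ric}(T)_{22}\geq-\frac{4}{5n}b_1\,\textup{scal}(T)$ together with $\textup{scal}(T)\geq0$. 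What remains to show is
\[
\tfrac{2}{5n}b_1(1+(n-2)p)\leq p+\tfrac{2(n-1)}{n}(q-p),
\]
where the right-hand side equals $\frac{8q-3p}{5}$ when $n=5$. By $(iii)$ the right-hand side is at least $p$ for $n\geq 6$ and is positive for $n=5$. Since $p\geq\frac{b_1-b_2}{1+(n-2)b_2}$ is comparable to $b_1$, while the left side is $O(b_1/n)$, the inequality should hold. If the crude bound is too tight at $n=5$, I would instead use the sharper quantitative margin in $8q-3p$ from the computation in $(iii)$, evaluated at $b=b_2(5)=\frac{1}{32}$.
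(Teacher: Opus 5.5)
Your plan follows the paper's proof closely. Items $(i)$, $(iii)$ and $(v)$ are the same direct computations. Your reduction of $(ii)$ to
\[
\tfrac{2}{5n}\, b_1 \,(1+(n-2)p) \;\le\; p + \tfrac{2(n-1)}{n}(q-p) = \tfrac{2(n-1)q-(n-2)p}{n}
\]
is exactly the paper's condition $\zeta(b) \le 1$, where $\zeta = (1+\frac{2}{5}b_1)\frac{1+(n-2)p}{1+2(n-1)q}$.

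Your way of proving it is a legitimate and slightly shorter alternative to computing $\zeta$ explicitly and showing it is increasing. The left side is at most $\frac{b_1}{2n}$ because $p \le b_1$. The right side is at least $p \ge p(b_2)$ once $q \ge p$. And $p(b_2)$ is comfortably larger than $\frac{b_1}{2n}$; for instance $p(b_2)=\frac{n-4}{2(n-2)(2n-3)}$ for $n \ge 7$.

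Incidentally, $q > p$ also holds for $n=5$, since $72(1+8b)(1+3b)(q-p) = 1 - 27b + 360b^2 > 0$. The paper records only $8q > 3p$ there because that is all it uses later (where $v \ge \frac12$ for $n=5$), not because the margin is small.

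\textbf{The step in $(iv)$ fails as written.} Replacing $(1+(n-2)b)^2$ by $1$ gives $P \ge 2/(1+2(n-1)a)$, which exceeds $\frac{19}{10}$ only if $2(n-1)a < \frac{1}{19}$. At $b = b_2$ this quantity is:
\begin{itemize}
\item $\frac14$ for $n=5$ and $\frac15$ for $n=6$;
\item about $0.252$ for $n=7$, so you only get $P \ge 1.59$;
\item above $\frac{1}{19}$ for every $n \le 22$.
\end{itemize}
So the heuristic ``$O(n^{-1})$ in general'' does not close the argument. You must keep the numerator in all low dimensions, not only for $n \in \{5,6\}$.

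\textbf{How to repair it.} The reason $P$ is close to $2$ is a near-cancellation:
\[
1 + 2(n-1)a - (1+(n-2)b)^2 = 2b + \bigl( 2(n-1)A_n - (n-2)^2 \bigr) b^2 .
\]
Hence $P = 2\big/\bigl(1 + \frac{2b + (2(n-1)A_n-(n-2)^2)b^2}{(1+(n-2)b)^2}\bigr)$.
\begin{itemize}
\item For $n \ge 7$ the bracket equals $n-2$. So $P \ge \frac{2}{1+2b+(n-2)b^2}$, and $2b_2 + (n-2)b_2^2 \le \frac{1}{25} + \frac{1}{500} < \frac{1}{19}$, uniformly in $n$.
\item For $n \in \{5,6\}$ you need, as in the paper, that $P$ is decreasing in $b$. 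Its derivative has the sign of $(n-1)(n-2)b - 1 < 0$.
\item Then use the exact endpoint values $P(b_2) = \frac{245}{128}$ and $\frac{243}{125}$. At $n=5$ the margin over $\frac{19}{10}$ is only about $0.014$.
\end{itemize}
With this repair your proposal is complete.
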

\begin{proof}
Using the definition~\eqref{eqn:kappa}, for $n \in \{5,6\}$ we find $\kappa = \frac{1-2b}{1+2(n-1) b}  b^2 > 0$ due to $b \leq \frac{1}{32}$.
Moreover, for $n \geq 7$, we compute
\[
n b^{-2} (1+2(n-1) a) \, \kappa = n(1-2b) - (n-2)(1-2b+nb^2) = 2 - 4b - n(n-2) b^2
\]
For $b \leq \frac{1}{2(n-2)^2}$, the right-hand side of this expression is at least $\tfrac{8n^3 - 48n^2 + 87n - 48}{4(n-2)^3}$, whose denominator is positive for $n \geq 7$.
This proves $(i)$.

Next, we turn to the claim $(ii)$.
We define the function $\zeta(b)$ by
\begin{align}
    \zeta(b) &:= \frac{1 + 2(n-1) a}{1 + 2(n-1) a_1} \, \frac{1 + (n-2) b_1}{1 + (n-2) b} \, \Bigl( 1 + \frac{2}{5} b_1 \Bigr) = \Bigl( 1 + \frac{2}{5} b_1 \Bigr) \frac{1 + (n-2) p}{1+2(n-1) q}. \label{eqn:zeta-a-b}
\end{align}
We claim that $\zeta(b) < 1$ for $b \leq b_2(n)$.
We will first prove that $\zeta(b)$ is strictly increasing.
Using~\eqref{eqn:a1-omega1-b1} and the expression $a = b + A_n b^2$, we compute 
\begin{equation}\label{eqn:Xi-p-zeta}
\begin{aligned}
    n&=5: \quad &p &= \tfrac{1-12b}{12(1+3b)}, \quad &q &= \tfrac{7-72b}{72(1+8b)}, \quad &\zeta &= \tfrac{93(1+8b)}{128(1+3b)}. \\
    n&=6: \quad &p &= \tfrac{1-16b}{16(1+4b)}, \quad & q &= \tfrac{37-512b}{512(1+10b)}, \quad & \zeta &= \tfrac{328(1+10b)}{441(1+4b)}. 
\end{aligned}
\end{equation}
Hence, the fact that $\zeta$ is strictly increasing and satisfies $\zeta \in (0,1)$ is clear from the bounds $b \leq b_2(n) = \frac{1}{2(n-1)^2}$ for $n \in \{ 5, 6\}$.
For $n \geq 7$, we first obtain
\[
\zeta(b) = \tfrac{2(n-2)(10n-19)}{(5n-9)^2 (1 + (n-2) b) } [1 + 2(n-1) b + (n-2)(n-3) b^2]
\]
We observe the simplification
\begin{equation}\label{eqn:zeta(b)-inequality-simple}
\begin{split}
\tfrac{(5n-9)^2}{2(n-2)(10n-19)} \zeta(b)
&= \tfrac{1 + 2(n-1) b + (n^2 - 3n+2) b^2}{1 + (n-2) b} = 1 + \bigl( n-1 + \tfrac{1}{1+(n-2) b} \bigr) b, \\
\tfrac{(5n-9)^2}{2(n-2)(10n-19)} \zeta'(b) &= n-1+ \tfrac{1}{(1 + (n-2) b)^2}  > 0.
\end{split}
\end{equation}
Therefore, $\zeta$ is strictly increasing in $b$, and it suffices to show that $\zeta( \frac{1}{2(n-2)^2}) < 1$.
We compute
\[
1 - \zeta ( \tfrac{1}{2(n-2)^2}) = \tfrac{10(n-2)^4 + (n-2)^3 - 42(n-2)^2 - 14(n-2) - 1}{(n-2)(2n-3) (5n-9)^2}
\]
where the numerator satisfies, for $n \geq 7$, 
\[
(n-2)^2 [ 10(n-2)^2 + (n-2) - 50] > 0, \qquad 8(n-2)^2 - 14(n-2) - 1 > 0.
\]
Thus, $\zeta(b)<1$ for $b \in (0,b_2(n)]$ and $n \geq 5$.
Finally, the claimed weak two-positivity follows from the expression~\eqref{eqn:S-T-p} as in~\cite{brendle-PIC}*{Lemma 4.5}. 
Applying the lower bound therein, we obtain
\[
\textup{Ric}(S)_{11} + \textup{Ric}(S)_{22} \geq \tfrac{2(1-\zeta)}{n} \textup{scal}(S) \geq 0
\]
using the bound $\zeta<1$ and $\textup{scal}(S)>0$ for $S \in \cE_2(b) \setminus \{ 0 \}$. 
This proves the property $(ii)$.

We now study $(iii)$.
For $n=5$, the computations~\eqref{eqn:Xi-p-zeta} imply
\[
36 (1+8b)(1+3b) ( 8 q - 3p) = 19 - 168 b > 0, \qquad \text{for } \; b \leq \tfrac{1}{32}.
\]
For $n=6$, we find
\[
512 (1+4b)(1+10b) (q - p) = 3072 b^2 - 172 b + 5
\]
where the quadratic has negative discriminant.
Thus, $q > p$.
For $n \geq 7$, we compute
\begin{align*}
    &32 (n-2)^2 (1 + (n-2) b) \, (1 + 2(n-1) a)  \, (q - p) \\
    &= 16 b^3(n-2)^4 + 32b^3(n-2)^3 + 8b^2(n-2)^3 + 56b^2(n-2)^2 - 7b(n-2)^2 - 15b(n-2) + (n-1) \\
    &\geq - 7b(n-2)^2 - 15b (n-2)+ (n-1)
\end{align*}
by discarding all the non-constant positive terms.
Finally, since $b \leq \frac{1}{2(n-2)^2}$, the last expression is bounded from below by $n-1-\frac{7}{2} - \frac{15}{2(n-2)} > 0$ for $n \geq 7$.
Therefore, $q>p$ in all cases.

We next prove the property $(iv)$.
Using the expression~\eqref{eqn:P(ab)Q(ab)}, for $n \in \{ 5, 6\}$ we find
\[
P(b) = 2 \tfrac{(1 + (n-2) b)^2}{1+2(n-1) b}, \qquad P'(b) = 4 \tfrac{1+(n-2) b}{(1 + 2(n-1) b) ^2} [ -1 + (n-2)(n-3) b].
\]
Because $b \leq \frac{1}{2(n-1)^2}$, we have $(n-1)(n-2) b \leq \frac{n-2}{2(n-1)} < 1$, so $P$ is strictly decreasing in $b$ and attains its minimum at $b = b_2(n)$.
We thus compute
\[
n=5: \quad P \geq 2 \tfrac{(1 + \frac{3}{32})^2}{1 + \frac{8}{32}} = \tfrac{245}{128}, \qquad n=6: \quad P \geq 2 \tfrac{(1 + \frac{4}{50})^2}{1 + \frac{10}{50}} = \tfrac{243}{125} 
\]
so $P > \frac{19}{10}$ for $n \in \{5,6\}$.
Moreover, $\frac{(1+(n-2) b)^2}{1+2(n-1) b} < 1$ for $b \in (0,b_2(n)]$, so $P<2$ for $n \in \{ 5, 6\}$.

For $n \geq 7$, we use $a = b + \frac{n-2}{2} b^2$ to obtain
\[
P(b) = 2 \tfrac{(1 + (n-2) b)^2}{1 + 2(n-1)b + (n-1)(n-2) b^2}, \qquad P'(b) = - 4 \tfrac{1+(n-2) b}{(1 + 2(n-1) b + (n-1)(n-2) b^2)^2} < 0.
\]
Thus, $P$ is again strictly decreasing.
At $b_2 = \frac{1}{2(n-2)^2}$, we compute the expressions
\begin{align*}
P( \tfrac{1}{2(n-2)^2}) &= \tfrac{2(n-2) ( 2(n-2) + 1)^2}{4 (n-2)^3 + 4(n-2)^2 + 5(n-2) + 1}, \\
\tfrac{\partial}{\partial n} P ( \tfrac{1}{2(n-2)^2}) &= \tfrac{2 ( 2(n-2)+1) ( 16 (n-2)^2 + 6(n-2) + 1) }{ [ 4(n-2)^3 + 4(n-2)^2 + 5(n-2) + 1]^2} > 0, 
\end{align*}
so $P ( \frac{1}{2(n-2)^2})$ is strictly increasing in $n$.
For $n=7$, we compute $P \geq \frac{605}{313}> \frac{19}{10}$.
To summarize, we have proved that $P > \frac{19}{10}$ for all $b \in (0,b_2(n)]$ and all $n \geq 7$.
Finally, the bound 
\[
(1+(n-2) b)^2 < 1+2(n-1) b + (n-1)(n-2) b^2
\]
again implies that $P<2$.
This proves the property $(iv)$ for all $n \geq 5$.

We finally prove the lower bound for $Q(b)$.
For $n \in \{ 5,6\}$, since $a=b$, we compute $Q = 2b \frac{2+(3n-4) b}{1+2(n-1) b}$.
Thus, $\frac{Q}{b}$ is strictly decreasing in $b$, and computing at the endpoints $b_2(5)= \frac{1}{32}$ and $b_2(6) = \frac{1}{50}$ yields
\begin{align*}
Q &\geq \tfrac{15}{4} b \quad \text{for } \; n=5 \quad \text{and } \; b \in (0, \tfrac{1}{32}], \\ 
Q &\geq \tfrac{19}{5} b \quad \text{for } \; n=6 \quad \text{and } \; b \in (0, \tfrac{1}{50}].
\end{align*}
For $n \geq 7$, using $a = b + \frac{n-2}{2} b^2$ gives
\[
Q - 4b = 2b^2 \frac{(n-2)^2 (1+(n-1) b)^2 - 2n}{n( 1 + 2(n-1) b + (n-1)(n-2) b^2)} > 0.
\]
Thus, $Q>4b$ completes the proof of our assertion.
\end{proof}

We now introduce a crucial novelty compared to the construction of~\cite{brendle-PIC} and later modifications.
Those constructions prove the transversality of the cone $C_2(b)$ by showing that $\frac{d}{dt} \bigr|_{t=0} \tilde{Z}_{\lambda}(S)>0$ at every face where $\tilde{Z}_{\lambda}(S) = 0$.
In lower dimensions, this approach encounters a key difficulty: the quantity $\frac{d}{dt} \bigr|_{t=0} \tilde{Z}_{\lambda}(S)$ has a large negative contribution as $\lambda \uparrow 1$, and arranging parameters to make it uniformly positive becomes challenging.
To overcome this issue, we leverage the PIC inequalities for $S$ and the corresponding tensor $T = \ell_{a_1, b_1}^{-1} \ell_{a,b}(S)$ to first rule out the vanishing $\tilde{Z}_{\lambda}(S) = 0$ for any $S \in \cE_2(b) \setminus \{ 0 \}$, whenever $\lambda$ is quantitatively close to $1$.

To quantify this property, we define functions $\eta_n(b)$, for $n \geq 5$, as follows:
\begin{equation}\label{eqn:eta-n(b)}
    \eta_5 := \frac{248 p + 360 q}{383}, \qquad \eta_6 := \frac{82 p + 160 q}{121}, \qquad \eta_n := \frac{2(n-4) p + 8 q}{n}, \quad \text{for } \; n \geq 7.
\end{equation}

\begin{proposition}\label{prop:why-the-long-face}
Let $n \geq 5$ and let $S \in S^2_B(\fs \fo(n)) \setminus \{ 0 \}$ be an algebraic curvature operator such that $T := \ell^{-1}_{a_1, b_1} \ell_{a,b}(S) \in \cE_1(b_1)$.
Then $S$ is strictly PIC.

Moreover, suppose that, for some orthonormal four-frame and some $\lambda \in [0,1)$, the function $\rho \mapsto \tilde{Z}_{\rho}(S)$ attains a minimum on $[0,1]$ at $\lambda$, and that $\tilde{Z}_{\lambda}(S) \leq 0$.
Then,
\begin{equation}\label{eqn:lambda-inequality}
    \lambda \leq ( \textstyle{\sqrt{1+\eta_n}} - \textstyle{\sqrt{\eta_n}} )^2. 
\end{equation}
If $\tilde{Z}_{\lambda}(S) < 0$, the inequality is strict.
\end{proposition}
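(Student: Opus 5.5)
The plan is to transport everything to $T=\ell^{-1}_{a_1,b_1}\ell_{a,b}(S)\in\cE_1(b_1)$ via \eqref{eqn:S-T-p}. Using $(h\owedge\textup{id})_{ikik}=h_{ii}+h_{kk}$ and $(\textup{id}\owedge\textup{id})_{ikik}=2$ for $i\neq k$, and writing $R:=\textup{Ric}(T)$, for every four-frame and $\rho\in[0,1]$ we get
\[
Z_\rho(S)=Z_\rho(T)+p\bigl[(1+\rho^2)(R_{11}+R_{22})+2R_{33}+2\rho^2R_{44}\bigr]+\tfrac{4}{n}(q-p)(1+\rho^2)\,\textup{scal}(T).
\]
For strict PIC of $S$, we take $\rho=1$. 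Then $Z_1(T)\ge 0$ and the $p$-term equals $2p\,\textup{tr}_{\Pi_4}R$, where $\Pi_4=\textup{span}\{e_1,\dots,e_4\}$. For $n\ge 6$, Lemma~\ref{lemma:consequences-of-PIC} gives $\textup{tr}_{\Pi_4}R\ge 0$, and $q>p>0$ from Lemma~\ref{lemma:properties-II}~$(iii)$ gives $Z_1(S)\ge \frac8n(q-p)\textup{scal}(T)>0$. For $n=5$ we have $\textup{tr}_{\Pi_4}R=\textup{scal}(T)-R(v,v)$ with $v\perp\Pi_4$. We bound $R(v,v)$ using Lemma~\ref{lemma:pinching-from-PIC}~$(i)$, with $\alpha$ read off from \eqref{eqn:ric-11-first} (i.e.\ $\alpha=-\frac25 b_1$). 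This gives $\textup{tr}_{\Pi_4}R\ge c\,\textup{scal}(T)$, and $Z_1(S)>0$ should then reduce exactly to $8q-3p>0$. Here $\textup{scal}(T)>0$ because $S\neq 0$.

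For the second claim, we write $\tilde Z_\rho(S)=\mathcal A+\mathcal B\rho^2-2\gamma\rho$. Here $\mathcal A=S_{1313}+S_{2323}+c$, $\mathcal B=S_{1414}+S_{2424}-c$, $\gamma=S_{1234}$, and $c=\sqrt{2b}[\textup{Ric}(S)_{11}+\textup{Ric}(S)_{22}+2B_nb\,\textup{scal}(S)]\ge 0$ by Lemma~\ref{lemma:properties-II}~$(ii)$. At an interior minimum ($\lambda\in(0,1)$) we have $\gamma=\lambda\mathcal B$, so $\tilde Z_\lambda=\mathcal A-\lambda^2\mathcal B\le 0$. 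At $\lambda=0$ the inequality is trivial. Note that the target bound \eqref{eqn:lambda-inequality} is exactly the smaller root of $(1-\lambda)^2=4\eta_n\lambda$. So it suffices to prove $(1-\lambda)^2\ge 4\eta_n\lambda$, strictly when $\tilde Z_\lambda<0$. Combining $\mathcal A\le\lambda^2\mathcal B$ with the PIC-type inequality $\mathcal A+\mathcal B-2\gamma\ge\Delta$ gives $\mathcal B(1-\lambda)^2\ge\Delta$. Here $\Delta$ is the strict excess $Z_1(S)-Z_1(T)$ computed above, and $c$ cancels in $\mathcal A+\mathcal B$. The remaining task is therefore the comparison $\Delta\ge 4\eta_n\lambda\,\mathcal B$, or, after rearranging with $\mathcal A$, a suitable convex combination of this.

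To prove it, we will bound $\mathcal B=S_{1414}+S_{2424}-c$ from above in terms of $\textup{scal}(T)$ and the Ricci entries of $T$. The tools are the PIC inequalities for $T$ in the frames obtained by swapping $e_3\leftrightarrow e_4$ (averaging over signs as in \eqref{eqn:Tijkl}), the two-Ricci bound \eqref{eqn:ric-11-first} for $T$, and the eigenvalue bounds of Lemma~\ref{lemma:pinching-from-PIC}. We will bound $\Delta$ from below by the same quantities. The weights $2(n-4)p+8q$ over $n$ (and the ad hoc $n=5,6$ weights in \eqref{eqn:eta-n(b)}) suggest that $\Delta$ and $\mathcal B$ are each controlled by linear combinations of $p\,\textup{tr}_{\Pi_4}R$ and $q\,\textup{scal}(T)$. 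We then split into cases according to the sign of the leftover Ricci terms and take the worst linear combination.

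The main obstacle is this last bookkeeping step: choosing the weights among the PIC inequalities so that the ratio $\Delta/\mathcal B$ is at least $4\eta_n\lambda$, uniformly over admissible Ricci spectra. This is hardest in dimensions $5$ and $6$, where $q-p$ is small or $8q-3p$ must be used. I would first set up the extremal problem as a linear program in the variables $R_{ii}/\textup{scal}(T)$ under the constraints of Lemmas~\ref{lemma:consequences-of-PIC} and \ref{lemma:pinching-from-PIC}, and read off the weights. The strictness statement follows because $\tilde Z_\lambda<0$ makes $\mathcal A<\lambda^2\mathcal B$ strict.
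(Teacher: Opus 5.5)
\textbf{Gap: the upper bound on $\mathcal{B}$ is missing.} Your proof of strict PIC and your reduction of the second claim are correct and agree with the paper. Your $\mathcal{B}$ is the paper's $D$. The relation $(1-\lambda)^2\mathcal{B}=Z_1(T;F)+\Delta-\tilde{Z}_\lambda(S)\ge\Delta$ is exactly the paper's lower bound. And \eqref{eqn:lambda-inequality} is indeed equivalent to $(1-\lambda)^2\ge 4\eta_n\lambda$ for $\lambda\in[0,1)$.

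The step you postpone as bookkeeping, an upper bound on $\mathcal{B}$, is the heart of the argument, and the tools you name cannot give it. Swapping $e_3\leftrightarrow e_4$ keeps the pairing $\{e_1,e_2\}\,|\,\{e_3,e_4\}$. It only yields $\mathcal{A}+\mathcal{B}+2\gamma=\tilde{Z}_\lambda(S)+(1+\lambda)^2\mathcal{B}\ge\Delta$, which is again a lower bound. Moreover, $S_{1414}+S_{2424}$ is a sum of sectional curvatures, so an optimization in the Ricci variables $R_{ii}/\textup{scal}(T)$ cannot bound it unless you first find a PIC identity on the four-plane that trades it for Ricci data. Note also that the bound must depend on $\lambda$, since you need $\mathcal{B}\le\Delta/(4\eta_n\lambda)$.

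\textbf{The missing idea.} Apply PIC of $T$ to the two \emph{other} pairings of the same four-plane, in the oriented frames $(e_1,e_3,e_2,e_4)$ and $(e_1,e_4,e_2,-e_3)$. Since they span the same four-plane, the shift from $T$ to $S$ is again $\Delta$. Add these two inequalities to $\mathcal{A}+\mathcal{B}+2\gamma$. The first Bianchi identity $S_{1324}-S_{1423}=S_{1234}$ cancels the mixed terms, and the total is $2\sum_{1\le i<j\le 4}S_{ijij}=X+3\Delta$, where $X:=2\sum_{1\le i<j\le 4}T_{ijij}$. Hence $(1+\lambda)^2\mathcal{B}\le X+\Delta-\tilde{Z}_\lambda(S)$. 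Subtracting your identity for $(1-\lambda)^2\mathcal{B}$ gives $4\lambda\mathcal{B}\le X-Z_1(T;F)\le X$, which is the needed $\lambda$-dependent bound.

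The orientations are essential. If you average these inequalities over signs, as you propose, $(1+\lambda)^2$ is replaced by $1+\lambda^2$. You would then only get $2\lambda\mathcal{B}\le X$, hence $(1-\lambda)^2\ge 2\eta_n\lambda$, a factor $2$ short of the statement.

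\textbf{Bounding $X$.} Set $v=\textup{tr}_F\textup{Ric}(T)/\textup{scal}(T)$. For $n=5$ one has exactly $X=(2v-1)\,\textup{scal}(T)$. For $n\ge 6$ one has $X\le v\,\textup{scal}(T)$, because $\sum_{i=1}^4\sum_{\alpha=5}^n T_{i\alpha i\alpha}\ge 0$ by PIC. Then $X/\Delta$ is a M\"obius function of $v$, increasing because $8q>3p$ (resp.\ $q>p$). Evaluate it at the largest admissible $v$, namely $\frac{20+\frac{16}{5}b_1}{15}$, $1+\frac{2}{15}b_1$ and $1$ for $n=5$, $n=6$ and $n\ge 7$ (from Lemmas~\ref{lemma:consequences-of-PIC}, \ref{lemma:pinching-from-PIC} and \eqref{eqn:ric-11-first}). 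This gives exactly $\eta_n^{-1}$, which is where the weights in $\eta_5$ and $\eta_6$ come from; no case analysis over Ricci spectra is needed. Once $4\eta_n\lambda\mathcal{B}\le\Delta$ is established this way, your strictness argument goes through as written.
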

\begin{proof}
For brevity, let $\tau := \textup{scal}(T) > 0$ and $v := \tau^{-1} \textup{tr}_F \textup{Ric}(T)$ for a four-frame $F$.
Note that
\begin{equation}\label{eqn:v-intervals}
v \in [ \tfrac{1}{2}, \tfrac{20 + \frac{16}{5} b_1}{15}] \quad \text{for } \; n=5, \qquad v \in [ 0, 1 + \tfrac{2}{15} b_1] \quad \text{for } \; n= 6,
\end{equation}
and $v \in [0,1]$ for $n \geq 7$.
The bounds $v \geq \frac{1}{2}$ for $n \geq 5$, as well as $v \leq \frac{20 + \frac{16}{5} b_1}{15}$ for $n = 5$ and $v \leq 1 + \frac{2(n-4)}{5n} b_1$ for $n=6$, follow directly from Lemmas~\ref{lemma:consequences-of-PIC} and~\ref{lemma:pinching-from-PIC}.
To see the last bound $v \leq 1$ for $n  \geq 7$, notice that $\dim F^{\perp} \geq 3$ for $n \geq 7$.
Therefore, Lemma~\ref{lemma:consequences-of-PIC} implies that $\textup{tr}_{F^{\perp}} \textup{Ric}(T) = \textup{scal}(T) - \textup{tr}_F \textup{Ric}(T) \geq 0$ due to $T \in C_{\textup{PIC}}$.

Now, we observe that $Z_1(S;F) > 0$ for every frame $F$.
Indeed, Lemma~\ref{lemma:properties-II} shows that $q > \frac{3}{8} p$ for $n = 5$ and $q>p$ for $n \geq 6$.
The expression~\eqref{eqn:S-T-p} implies that
\begin{equation}\label{eqn:A1(SF)-inequality}
Z_1(S;F) = Z_1(T;F) + 2 \, [ p v + \tfrac{4}{n}(q-p) ] \, \tau \, .
\end{equation}
For $n=5$, the bracketed term is at least $\frac{1}{2} p+\frac{4}{5}(q-p) = \frac{8 q- 3p}{10}>0$.
For $n \geq 6$, it is at least $\frac{4}{n}(q-p)>0$.
Thus, $Z_1(S;F) > Z_1(T;F) \geq 0$ in all cases, by $T \in C_{\textup{PIC}}$, so $S$ is strictly PIC. 

Next, we consider a four-frame $F = \{e_1, e_2, e_3, e_4 \}$ and a $\lambda \in [0,1)$ where $\tilde{Z}_{\rho}(S)$ attains its minimum.
If $\lambda=0$, the conclusion is immediate, so we can assume that $\lambda \in (0,1)$ is an interior minimum of the quadratic $\rho \mapsto \tilde{Z}_{\rho}(S)$.
We define the expression
\[
D : = S_{1414} + S_{2424} - \sqrt{2b} \, \bigl[  \textup{Ric}(S)_{11} + \textup{Ric}(S)_{22} + 2 B_n b \,  \textup{scal}(S) \bigr]
\]
Because $\lambda \in (0,1)$ is an interior minimum of the map $\rho \mapsto \tilde{Z}_{\rho}(S)$, evaluating the first and second derivative produces
\begin{equation}\label{eqn:take-derivatives-Z-rho}
    0= \tfrac{1}{2} \tfrac{d}{d \rho} \big\rvert_{\rho = \lambda} \tilde{Z}_{\rho}(S) = \lambda D - S_{1234} , \qquad
    0 \leq \tfrac{1}{2} \tfrac{d^2}{d \rho^2} \big\rvert_{\rho = \lambda} \tilde{Z}_{\rho}(S).
\end{equation}
This property allows us to write $\tilde{Z}_{\rho}(S) = \tilde{Z}_{\lambda}(S) + D (\rho - \lambda)^2$.
Recall that $Z_1(S;F) = \tilde{Z}_{\lambda}(S) + (1-\lambda)^2 D$, so $Z_1(S;F) > 0$ and $\tilde{Z}_{\lambda}(S) \leq 0$ yield $D>0$.
Combining this relation with~\eqref{eqn:A1(SF)-inequality} gives
\begin{equation}\label{eqn:D-next-inequality}
    (1-\lambda)^2 D \geq 2 \, [ pv + \tfrac{4}{n} (q- p)] \, \tau - \tilde{Z}_{\lambda}(S).
\end{equation}
Next, we obtain an upper bound for $D$.
We apply the identity~\eqref{eqn:A1(SF)-inequality} to the two other ordered four-planes $\tilde{F}_1 = (e_1, e_3, e_2, e_4)$ and $\tilde{F}_2 = (e_1, e_4, e_2, - e_3)$ obtained from the vectors $e_1, e_2, e_3, e_4$, which span the same four-plane and hence have the same value of $v$.
Using $Z_1(T; \tilde{F}_i) \geq 0$, we obtain
\begin{align*}
    S_{1212} + S_{1414} + S_{2323} + S_{3434} - 2 \, S_{1324} &\geq 2 \, [ pv + \tfrac{4}{n} ( q-p)] \, \tau, \\
    S_{1212} + S_{1313} + S_{2424} + S_{3434} + 2 \, S_{1423} &\geq 2 \, [ pv + \tfrac{4}{n} (q-p)] \, \tau, \\
    S_{1313} + S_{1414} + S_{2323} + S_{2424} + 2 S_{1234} &= \tilde{Z}_{\lambda}(S) + (1+\lambda)^2 D.
\end{align*}
We add these relations, and then use the first Bianchi identity $S_{1324} - S_{1423} = S_{1234} = \lambda D$ together with the property~\eqref{eqn:take-derivatives-Z-rho}.
Combining these properties produces
\begin{equation}\label{eqn:add-first-step}
    2 \sum_{1 \leq i < j \leq 4} S_{ijij} \geq \tilde{Z}_{\lambda}(S) +  (1+\lambda)^2 D + 4 \, [ pv + \tfrac{4}{n}(q-p)] \, \tau.
\end{equation}
On the other hand, the transformation~\eqref{eqn:S-T-p} from $T$ to $S$ implies that
\[
2 \sum_{1 \leq i < j \leq 4} S_{ijij} = 2 \sum_{1 \leq i < j \leq 4} T_{ijij} + 6 \, [ pv + \tfrac{4}{n}(q-p)] \, \tau.
\]
Combining these computations, we deduce that
\begin{equation}\label{eqn:1+lambda2-D-Tijij}
    (1+\lambda)^2 D \leq 2 \sum_{1 \leq i < j \leq 4} T_{ijij} + 2 \, [ pv + \tfrac{4}{n}(q-p)] \, \tau - \tilde{Z}_{\lambda}(S).
\end{equation}
Finally, we obtain an upper bound for the expression $2 \sum_{1 \leq i < j \leq 4} T_{ijij}$.
First, if $n =5$, then
\[
\tau v = 2 \sum_{1 \leq i < j \leq 4} T_{ijij} + \sum_{i=1}^4 T_{i5i5}, \qquad \tau = 2 \sum_{1 \leq i < j \leq 4} T_{ijij} + 2 \sum_{i=1}^4 T_{i5i5}
\]
by the identities for the Ricci and scalar curvatures.
Subtracting terms, we obtain $\sum_{1 \leq i < j \leq 4} T_{ijij} = \frac{2v-1}{2} \tau$.
On the other hand, for $n \geq 6$, Lemma~\ref{lemma:consequences-of-PIC} implies $\sum_{i=1}^4 \sum_{\alpha=5}^n T_{i \alpha i \alpha} \geq 0$: indeed, we apply the inequality~\eqref{eqn:Tijkl} on a frame $(e_i, e_j, e_{\alpha}, e_{\beta})$ to obtain
\[
T_{i \alpha i \alpha} + T_{i \beta i \beta} + T_{j \alpha j \alpha} + T_{j \beta j \beta} \geq 0.
\]
Summing over $i<j$ and $a<b$ gives a positive multiple of $\sum_{i=1}^4 \sum_{\alpha=5}^n T_{i \alpha i \alpha} \geq 0$, hence this sum is also non-negative.
As a result, we arrive at
\[
\tau v = 2 \sum_{1 \leq i < j \leq 4} T_{ijij} + \sum_{i=1}^4 \sum_{\alpha=5}^n T_{i \alpha i \alpha} \geq 2 \sum_{1 \leq i < j \leq 4} T_{ijij}.
\]
Therefore, $2 \sum_{1 \leq i < j \leq 4} T_{ijij} \leq \tau v$, and using this bound in~\eqref{eqn:1+lambda2-D-Tijij} shows that 
\begin{equation}\label{eqn:1+lambda-pic-inequalities}
    \begin{aligned}
        (1+\lambda)^2 D &\leq (2v-1) \tau + 2 \, [ pv + \tfrac{4}{5} (q-p)] \, \tau  - \tilde{Z}_{\lambda}(S), \qquad & & \textup{for } \; n = 5, \\
        (1+\lambda)^2 D &\leq v \tau + 2 [ pv + \tfrac{4}{n}(q-p)] \, \tau - \tilde{Z}_{\lambda}(S), \qquad & & \textup{for } \; n \geq 6. 
    \end{aligned}
\end{equation}
We introduce the rational functions $G_n$, for $n \geq 5$, given by
\begin{align*}
G_5(v) &:= \tfrac{2v-1}{pv + \frac{4}{5} (q-p)}, \qquad & G_n(v) &:= \tfrac{v}{pv + \frac{4}{n} (q-p)}, \quad & n &\geq6. \\
G'_5(v) &= \tfrac{5 \, ( 8 q - 3p)}{[ 5 pv + 4 (q-p)]^2 } > 0, \qquad & G'_n(v) &= \tfrac{4 n \, (q-p)}{[ n pv+ 4(q- p)]^2} >0, \quad & n &\geq6.
\end{align*}
Recall from Lemma~\hyperref[lemma:properties-II]{\ref{lemma:properties-II}~$(iii)$} that $q > \frac{3}{8}p$ for $n=5$ and $q>p$ for $n \geq 6$.
Hence, this computation shows that $G_n$ is strictly increasing for all $n \geq 5$.
Moreover, $\tilde{Z}_{\lambda}(S) \leq 0$ by our assumption, so we can combine the inequalities~\eqref{eqn:D-next-inequality} and~\eqref{eqn:1+lambda-pic-inequalities} to obtain
\begin{align*}
    \Bigl( \frac{1+\lambda}{1-\lambda} \Bigr)^2 &\leq 1 + \frac{2v-1}{2 [ pv + \frac{4}{5} (q-p)] - \tau^{-1} \tilde{Z}_{\lambda}(S)} \leq 1 + \frac{1}{2} G_5(v), \qquad & n &=5, \\
    \Bigl( \frac{1+\lambda}{1-\lambda} \Bigr)^2&\leq 1+ \frac{v}{2 [ pv + \frac{4}{n}(q-p)] - \tau^{-1} \tilde{Z}_{\lambda}(S)} \leq 1 + \frac{1}{2} G_n(v), \qquad & n &\geq 6.
\end{align*}
Thus, we can evaluate $G_n(v)$ at its right endpoint $v_n$ to obtain
\begin{align*}
\Bigl( \frac{1 + \lambda}{1 - \lambda} \Bigr)^2 &\leq 1 + \frac{1}{2} G_n ( v_{\max}) = \begin{cases}
    1 + \frac{1}{2} G_5 \bigl( \frac{20 + \frac{16}{5} b_1}{15} \bigr), &n=5, \\
    1 + \frac{1}{2} G_6 \bigl( 1 + \frac{2}{15} b_1 \bigr), & n = 6, \\
    1 + \frac{1}{2} G_n(1), & n \geq 7.
\end{cases}
\end{align*}
We evaluate each of the expressions on the right-hand side to see that they equal $1 + \eta_n^{-1}$ for every $n \geq 5$, where $\eta_n$ is defined in~\eqref{eqn:eta-n(b)}.
Therefore,
\[
\frac{1+\lambda}{1-\lambda} \leq \sqrt{\frac{1+\eta_n}{\eta_n}}, \qquad \implies \qquad \lambda \leq \frac{\sqrt{1+\eta_n} - \sqrt{\eta_n}}{\sqrt{1+\eta_n} + \sqrt{\eta_n}} = ( \sqrt{1+\eta_n} - \sqrt{\eta_n} ) ^2. 
\]
This establishes the inequality~\eqref{eqn:lambda-inequality}.
Moreover, the above inequalities are strict for $\tilde{Z}_{\lambda}(S) < 0$, unless $2v-1=0$ for $n=5$, or $v=0$ for $n \geq 6$.
In either exceptional case, the first inequality would imply $( \frac{1+\lambda}{1-\lambda})^2 \leq 1$, contradicting $\lambda>0$.
Hence, the final bound for $\lambda$ is strict whenever $\tilde{Z}_{\lambda}(S)<0$.
This completes the proof of our assertion.
\end{proof}
We now obtain two further arithmetic properties of our constructed families and the functions $\eta_n(b)$ considered above, which will be important for establishing the transversal invariance of the family $C_2(b)$ under the Hamilton ODE.

\begin{lemma}\label{lemma:the-eta-values}
    For $n \geq 5$ and $b \in (0,b_2(n)]$, the functions $\eta_n(b)$ defined in~\eqref{eqn:eta-n(b)} satisfy
    \[
    \tfrac{1}{2(n+2)} < \eta_n(b) < \tfrac{1}{6}, \qquad  1+ 2 \eta_n \leq \sqrt{n+4} \, \textstyle{ \sqrt{2 \eta_n (1+\eta_n)} }.
    \]
\end{lemma}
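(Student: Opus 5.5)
The plan is to reduce everything to explicit one-variable bounds on $p(b)$ and $q(b)$ over $b \in (0, b_2(n)]$, using the closed forms for $p$ and $q$ from \eqref{eqn:S-T-p} and \eqref{eqn:Xi-p-zeta}.

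\textbf{Step 1: monotonicity of $\eta_n$.} Both $p = \frac{b_1 - b}{1+(n-2)b}$ and $q = \frac{a_1-a}{1+2(n-1)a}$ are strictly decreasing in $b$, since $a(b)$ is increasing. Because $\eta_n$ is a positive combination of $p$ and $q$, it is also decreasing. It therefore suffices to check:
\begin{itemize}
\item the upper bound $\eta_n < \tfrac16$ in the limit $b \to 0$, where $p = b_1 = \tfrac{1}{4(n-2)}$ and $q = a_1 = \tfrac{9n-17}{32(n-2)^2}$;
\item the lower bound $\eta_n > \tfrac{1}{2(n+2)}$ at $b = b_2(n)$.
\end{itemize}

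For $n \in \{5,6\}$ these are numerical checks. For $n=5$, $b\to0$ gives $p = \tfrac1{12}$, $q = \tfrac7{72}$, so $\eta_5 = (248/12 + 360\cdot 7/72)/383 = (20.67+35)/383 \approx 0.145 < \tfrac16$. At $b = \tfrac1{32}$ we get $p = \tfrac{5}{210}$ and $q$ from \eqref{eqn:Xi-p-zeta}, and the lower bound should compare against $\tfrac1{14}$. The case $n=6$ is handled the same way.

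For $n \geq 7$, $\eta_n = \frac{2(n-4)p+8q}{n}$. At $b=0$ this is roughly $\frac{(n-4)/(2(n-2)) + (9n-17)/(4(n-2)^2)}{n}$, of order $\frac{1}{2n}$, hence below $\tfrac16$ for $n\ge7$ once cleared into a polynomial inequality. At $b_2 = \frac{1}{2(n-2)^2}$ we have $p = \frac{(n-2)/2 - 1}{2(n-2)^2(1+\frac{1}{2(n-2)})}$, approximately $\frac{1}{4(n-2)}\bigl(1-O(\tfrac1n)\bigr)$, and $q$ behaves similarly. Clearing denominators should yield a polynomial in $m=n-2$ with positive leading coefficient, which I would show positive by grouping terms as in Lemma~\ref{lemma:properties-II}.

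\textbf{Step 2: the second inequality.} The inequality $1+2\eta \le \sqrt{n+4}\sqrt{2\eta(1+\eta)}$ is equivalent, after squaring (both sides are positive), to
\[
(1+2\eta)^2 \le 2(n+4)\,\eta(1+\eta), \quad\text{i.e.}\quad 1 \le 2(n+2)\,\eta + 2(n+2)\,\eta^2 .
\]
This already follows from $\eta_n > \tfrac{1}{2(n+2)}$, since $2(n+2)\eta > 1$ and the quadratic term is nonnegative. So only the two-sided bound requires work.

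\textbf{Main obstacle.} I expect the hardest part to be the lower bound at $b_2(n)$ for general $n\ge7$. The margin there is only of relative order $O(1/n)$, since $\eta_n \approx \frac{1}{2n}$ is compared with $\frac{1}{2(n+2)}$. This requires an exact rational expression in $n$ and a careful sign analysis of the resulting numerator polynomial, for example by substituting $m=n-2$ and showing all coefficients are positive for $m\ge5$, or by splitting off a dominant term as done elsewhere in the paper.
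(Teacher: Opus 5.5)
Your approach is the same as the paper's. Strict monotonicity of $p$ and $q$, hence of $\eta_n$, in $b$ reduces the two-sided bound to checking $b=0$ and $b=b_2(n)$. Your squaring reduction of the second inequality to $1\le 2(n+2)\eta_n(1+\eta_n)$ is equivalent to the paper's argument, which evaluates the decreasing function $x\mapsto(1+2x)/\sqrt{2x(1+x)}$ at $x=\frac{1}{2(n+2)}$; yours is slightly more direct.

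There is one arithmetic slip you need to fix. For $n=5$,
\[
p(\tfrac{1}{32})=\frac{5/96}{35/32}=\frac{1}{21},
\]
not $\frac{5}{210}=\frac{1}{42}$. With your value the $n=5$ lower-bound check actually fails, since it gives $\eta_5\approx 0.065<\frac{1}{14}$. With the correct values $p_2=\frac{1}{21}$ and $q_2=\frac{19}{360}$ you get $\eta_5(\frac{1}{32})=\frac{647}{8043}=\frac{1}{14}+\frac{145}{16086}$, so the check passes.

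For $n\ge 7$, clear denominators using $p_2=\frac{n-4}{2(n-2)(2n-3)}$ and $8q_2=\frac{9n^2-51n+62}{4n^3-20n^2+37n-25}$. This gives
\[
\eta_n(b_2)-\frac{1}{2(n+2)}=\frac{16n^5-182n^4+601n^3-717n^2+302n-112}{2n(n+2)(n-2)(2n-3)(4n^3-20n^2+37n-25)}.
\]
The numerator is positive: $16n^2-182n+601\ge 111$ for $n\ge 7$, so the numerator is at least $n^2(111n-717)+302n-112>0$. This differs from the rational function displayed in the paper, which appears to contain a transcription error, but the conclusion is the same.
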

\begin{proof}
Using the definition~\eqref{eqn:S-T-p}, we observe that $p$ is strictly decreasing in $b$.
Moreover, $q$ is strictly decreasing in $a = b + A_n b^2$, hence also strictly decreasing in $b$.
Finally, the definition~\eqref{eqn:eta-n(b)} expresses $\eta_n(b)$ as a positive linear combination of $p, q$, hence $\eta_n$ is also strictly decreasing in $b$.
Moreover, $p(0) = b_1 = \frac{1}{4(n-2)}$ and $q(0) = a_1 = \frac{9n-17}{32(n-2)^2}$, so substitution into~\eqref{eqn:eta-n(b)} produces
\[
\eta_5(0) = \tfrac{167}{1149}, \qquad \eta_6(0) = \tfrac{267}{1936}, \qquad \eta_n(0) = \tfrac{2n^2 - 3n - 1}{4n(n-2)^2} \leq \eta_7(0) = \tfrac{19}{175}.
\]
Therefore, $\eta_n(0) < \frac{1}{6}$ in all cases.
Also, $\eta_5 ( \frac{1}{32}) - \frac{1}{14} = \frac{145}{16086}$ and $\eta_6 (\frac{1}{50} ) - \frac{1}{16} = \frac{2275}{104544}$, while
\begin{align*}
\eta_n\Bigl( \frac{1}{2(n-2)^2} \Bigr) - \frac{1}{2(n+2)} &= \frac{16 n^5 - 182 n^4 + 611 n^3 - 715 n^2 + 162 n + 16}{2n(n^2-4) (4n^3 - 20n^2 + 35n-23)}.
\end{align*}
The denominator is manifestly positive for $n \geq 7$, while the numerator satisfies
\[
n^3( 16 n^2 - 162 n +500) > 0, \qquad 111 n^3 - 715 n^2 > 0
\]
for $n \geq 7$, so it is positive.
This proves that $\eta_n ( \frac{1}{2(n-2)^2})  >\frac{1}{2(n+2)}$ for all $n \geq 5$.

Finally, we observe that the function $x \mapsto \frac{1+2x}{\sqrt{2x(1+x)}}$ is strictly decreasing, with derivative $x \mapsto ( - 2x(1+x))^{- \frac{3}{2}}$.
Therefore, the upper bound for $\frac{1+2 \eta_n}{\sqrt{2 \eta_n(1+\eta_n)}}$ follows from evaluating this at $x = \frac{1}{2(n+2)}$.
This completes the proof of our assertion.
\end{proof}

\begin{lemma}\label{lemma:the-quadratic-family-2}
For $n \geq 5$, we consider the parameters $b_2(n), A_n, B_n$ introduced in~\eqref{eqn:b2(n)-parameters}.
Then, for every $b \in (0 , b_2(n)]$ and every $(x,y) \in [0,1] \times [ \sqrt{2b}, \sqrt{(n+4) b} ]$, the expression
\begin{align*}
    &\bigl[ ( \tfrac{1}{2} +b^{-1}) y + B_n P - 2 \bigr] x^2 + [ 4 - 2B_n + (2B_n-1) y] \, x - \tfrac{n-2}{n-4} \sqrt{2b} \, x(1-x) + \tfrac{2}{n-2} B_n P (1-x)^2 \\
    & \quad - \tfrac{2}{n-2}(2B_n+1) y (1-x) - \tfrac{(2B_n+1)^2}{2B_n P} \bigl( \tfrac{n-4}{2(n-2)} y^2 + b \bigr) + 2 B_nQ + \tfrac{2(n - 2A_n)}{n^2} y
\end{align*}
is strictly positive.
\end{lemma}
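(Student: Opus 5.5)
The expression, call it $\Phi(x,y)$, is a quadratic polynomial in $x$ for each fixed $y$. The parameters $P$ and $Q$ are controlled by Lemma~\ref{lemma:properties-II}: we have $P\in(\frac{19}{10},2)$, and $Q\geq 4b$ for $n\geq 7$, respectively $Q\geq(4-\frac{1}{n-1})b$ for $n\in\{5,6\}$. The plan is to replace $P$ and $Q$ by these bounds in the direction that decreases $\Phi$ and then minimize over the rectangle.

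\textbf{Removing $P$ and $Q$.} The term $-\frac{(2B_n+1)^2}{2B_nP}(\cdots)$ is increasing in $P$. So are $B_nPx^2$ and $\frac{2}{n-2}B_nP(1-x)^2$. Hence we may set $P=\frac{19}{10}$ throughout. The term $2B_nQ$ is replaced by its lower bound, which is a multiple of $b$. After this, $\Phi\ge\Psi(x,y;b)$ with explicit rational coefficients depending on $n$ and $b$.

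\textbf{Organizing the terms.} For $n\ge7$ we have $B_n=4$, so the linear coefficient is $4-8+7y=-4+7y$. For $n\in\{5,6\}$ we have $B_n=20$, and the linear coefficient is $-36+39y$. In both cases it is negative, since $y\le\sqrt{(n+4)b}$ is small. The positive mass comes from three places:
\begin{itemize}
\item the large coefficient $b^{-1}y\ge \sqrt{2/b}$ in front of $x^2$;
\item the term $\frac{2}{n-2}B_nP(1-x)^2$;
\item the term $B_nP x^2$.
\end{itemize}
Since $y$ is at most of order $\sqrt b\lesssim 1/n$, the $y$-dependent terms $-\frac{2}{n-2}(2B_n+1)y(1-x)$ and $-\frac{(2B_n+1)^2}{2B_nP}\frac{n-4}{2(n-2)}y^2$ are lower order. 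They should be absorbed by the $(1-x)^2$ term and by $\frac{2(n-2A_n)}{n^2}y$. I would treat $\Psi$ as $\alpha x^2+\beta x+\gamma(1-x)^2+\delta$ and use the elementary bound
\[
\alpha x^2+\beta x+\gamma(1-x)^2\ \ge\ \frac{\alpha\gamma-\beta^2/4+\beta\gamma}{\alpha+\gamma}.
\]
This is the minimum of $\alpha x^2+\beta x+\gamma(1-x)^2$ over $x\in\mathbb{R}$ when $\alpha+\gamma>0$. One then only has to check that this minimum plus $\delta$ is positive. Here $\delta$ is the $x$-independent remainder, namely $2B_nQ$ minus the $y^2$ and $b$ terms. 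The $-\frac{n-2}{n-4}\sqrt{2b}\,x(1-x)$ term is merged into $\alpha$, $\beta$ and $\gamma$ by writing $x(1-x)=x-x^2$.

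\textbf{Reducing to $b=b_2(n)$ and $y$ at an endpoint.} Next I would show that the resulting bound is monotone in $y$. I expect it to be worst at $y=\sqrt{2b}$, because the dominant $b^{-1}y$ coefficient is smallest there, while the negative $y$ terms are small; concavity in $y$ also lets us check both endpoints. Likewise I would show it is worst at $b=b_2(n)$, writing $y=c\sqrt b$ with $c\in[\sqrt2,\sqrt{n+4}]$. After these reductions we are left with a one-variable polynomial inequality in $n$ for $n\ge7$, verified by expanding in powers of $(n-7)$ and checking that the coefficients are positive. The cases $n=5$ and $n=6$ are checked separately by direct numerical evaluation with exact rationals.

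\textbf{Main obstacle.} The hard case is $n\in\{5,6\}$. There $B_n=20$ makes $\frac{(2B_n+1)^2}{2B_nP}\,b$ sizable against $2B_nQ\approx 160b$: it is about $\frac{1681}{76}b\approx 22b$, so there is still room. The linear coefficient $-36$ must be compensated by the $x^2$ coefficient of order $b^{-1/2}$ together with $\frac{2}{n-2}\cdot 38(1-x)^2$. For $n=5$ the latter gives $\gamma\approx 25$, and $\alpha\ge\sqrt{2\cdot 32}+36\approx 44$ at $b=\frac1{32}$. This means $\beta^2/4=324$ competes with $\alpha\gamma+\beta\gamma\approx 1100-900$, so the margin is thin, and the $x(1-x)$ term with coefficient $\frac{3}{1}\sqrt{2b}$ must be handled carefully. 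If the crude $P=\frac{19}{10}$ substitution loses too much, I would keep $P$ exact at $b=b_2$ (namely $\frac{245}{128}$ and $\frac{243}{125}$), and use monotonicity in $b$ only on the separate monomials.
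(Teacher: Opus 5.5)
Your first two reductions are fine and agree with the paper. Since $\psi$ is increasing in $P$ and $Q$, you may insert $P=\tfrac{19}{10}$ and the lower bound for $Q$. Concavity in $y$ at fixed $x$ then reduces to $y\in\{\sqrt{2b},\sqrt{(n+4)b}\}$.

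\textbf{The gap.} Your treatment of $n\in\{5,6\}$ rests on the step ``show it is worst at $b=b_2(n)$'', and that step is false. Put $t=\sqrt{2b}$, $n=5$, $y=t$, $P=\tfrac{19}{10}$, $Q=\tfrac{15}{4}b$. The expression becomes $U(t)x^2+V(t)x+W(t)$ with $U=\tfrac{2}{t}+\tfrac{7}{2}t+\tfrac{184}{3}$, $V=\tfrac{10}{3}(19t-26)$ and $W=\tfrac{76}{3}-\tfrac{404}{15}t+\tfrac{6869}{114}t^2$. Its minimum over $x$, attained near $x=\tfrac12$, is $W-\tfrac{V^2}{4U}$. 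This is $\approx 4.5$ at $t=\tfrac14$ (that is, $b=b_2(5)$), but only $\approx 2.85$ at $t=0.15$. For $n=6$, with the analogous substitutions, one gets $\approx 2.43$ at $t=\tfrac15$ versus $\approx 1.73$ at $t=0.14$.

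This is structural, not an accident of the constants. The term $b^{-1}y=\sqrt{2/b}$ in the $x^2$-coefficient is what pays for the negative linear term, and it decays as $b$ grows. By contrast, $2B_nQ$, $(2B_n-1)yx$ and the net $t^2$-term of $W$ all grow with $b$. So the minimum over $b$ is interior, and exact-rational checks at $b=b_2(n)$ say nothing about smaller $b$.

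\textbf{The fallback also fails.} Bounding each monomial by its worst value over $(0,b_2]$ discards exactly the terms that grow with $b$. For $n=5$ it yields $U\ge\tfrac{208}{3}$, $V\ge-\tfrac{260}{3}$ and $W\ge\tfrac{93}{5}$. But $\tfrac{208}{3}x^2-\tfrac{260}{3}x+\tfrac{93}{5}<0$ at $x=\tfrac58$. Using $P=\tfrac{245}{128}$ instead changes this only marginally.

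\textbf{What is needed.} You need an estimate that is uniform in $t\in(0,t_{\max}]$ and keeps the coupling between $U$, $V$ and $W$. The paper picks a constant $c$ with $U_i(t)\ge c\,|V_i(t)|$ and $W_i(t)>\tfrac{1}{4c}|V_i(t)|$. It follows that $W_i-\tfrac{V_i^2}{4U_i}\ge W_i-\tfrac{|V_i|}{4c}>0$. Since $V_i<0$ is affine in $t$, both conditions become quadratic polynomials in $t$ (after multiplying the first by $t$) with negative discriminant. For example, for $n=5$, $y=\sqrt{2b}$ and $c=\tfrac{19}{20}$, they are $\tfrac{1}{3t}(191t^2-63t+6)$ and $\tfrac{1}{570}(34345t^2-5852t+1440)$.

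For $n\ge7$, termwise worst-case bounds do suffice; this is what the paper does there. So your plan is repairable in that range. But $n\in\{5,6\}$ is the heart of the lemma, and there your argument as proposed does not go through.
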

\begin{proof}
We denote the given function by $\psi(x,y,P,Q)$.
For fixed $(x,P,Q)$, the expression $y \mapsto \psi(x,y,P,Q)$ defines a concave quadratic with top-order coefficient $- \frac{(2B_n+1)^2}{2 B_n P} \frac{n-4}{2(n-2)} < 0$, so it is minimized at one of the boundary values $y \in \{ \sqrt{2b}, \sqrt{(n+4) b} \}$.
Moreover, every term of $\psi(x,y,P,Q)$ that depends on $P$ is strictly increasing in $P$, so $\partial_P \psi > 0$ and $\partial_Q \psi = 2B_n > 0$.
We recall the bounds $P>\frac{19}{10}$ and $Q \geq (4 - \frac{1}{n-1})b$ for $n \in \{ 5, 6\}$, as well as $Q \geq 4b$ for $n \geq 7$, due to items~\hyperref[lemma:properties-II]{$(iv)$, $(v)$} of Lemma~\ref{lemma:properties-II}.
Therefore, it suffices to prove the positivity of the functions
\begin{align*}
    \theta_1(x) &:= \begin{cases}
        \psi \bigl( x, \sqrt{2b}, \tfrac{19}{10}, (4 - \tfrac{1}{n-1}) b  \bigr), & \qquad \quad \!\! \text{for } \; n \in \{ 5,6 \}, \\
        \psi ( x, \sqrt{2b}, \tfrac{19}{10}, 4b), & \qquad \quad \!\! \text{for } \; n \geq 7,
    \end{cases} \\
    \theta_2(x) &:= \begin{cases}
        \psi \bigl( x, \textstyle{\sqrt{(n+4) b}}, \tfrac{19}{10}, (4 - \tfrac{1}{n-1} ) b \bigr), & \textup{for } \; n \in \{ 5,6\}, \\
        \psi ( x, \textstyle{\sqrt{(n+4) b}}, \tfrac{19}{10}, 4b), & \textup{for } \; n \geq 7.
    \end{cases}
\end{align*}
We now consider two cases, depending on whether $n \in \{ 5,6\}$ or $n \geq 7$.
For this, it is simpler to work with the variable $t := \sqrt{2b}$.
Notice that $t \leq \sqrt{2 b_2(n)}$, hence $t \leq \frac{1}{n-1}$ for $n \in \{ 5, 6\}$ and $t \leq \frac{1}{n-2}$ for $n \geq 7$.
In each situation, we will express $\theta_i(x)$ as a quadratic function of $x$, writing
\begin{equation}\label{eqn:theta-i-collect}
\theta_i(x) = U_i(t) \, x^2 + V_i(t) x + W_i(t), \qquad i \in \{ 1, 2\}, \qquad t := \textstyle{\sqrt{2b}}.
\end{equation}
We will prove that the expression~\eqref{eqn:theta-i-collect} remains positive for $x \in [0,1]$, for each fixed $b \in (0,b_2(n)]$.
In each case, we will use the quadratic lower bound
\begin{equation}\label{eqn:quadratic-lower-bound}
    \theta_i(x) \geq U_i x^2 + V_i x + W_i \geq W_i - \tfrac{1}{4 U_i} V_i^2.
\end{equation}
In particular, this implies that $\theta_i(x) > 0$ whenever we can produce some $c>0$ such that 
\begin{equation}\label{eqn:produce-some-c}
    U_i \geq c \, |V_i| \qquad \text{and} \qquad W_i > \tfrac{1}{4c} |V_i|.
\end{equation}

\noindent \textbf{Case 1: $n=5$}.
In this case, $A_n = 0$ and $B_n = 20$.
Moreover, $t \leq \frac{1}{4}$.
We obtain the coefficients
\begin{align*}
    U_1(t) &= 2t^{-1} + \tfrac{7}{2}t + \tfrac{184}{3}, \qquad & U_2(t) &= 3 \sqrt{2} \, t^{-1} + \tfrac{3}{4}(4 + \sqrt{2}) t +  \tfrac{184}{3}, \\
    V_1(t) &=\tfrac{10}{3} (19t - 26), \qquad & V_2(t) &= - \tfrac{260}{3} + \tfrac{1}{2} ( 199 \sqrt{2} - 6) t, \\
    W_1(t) &= \tfrac{76}{3} - \tfrac{404}{15} t + \tfrac{6869}{114} t^2, \qquad & W_2(t) &= \tfrac{76}{3} - \tfrac{202 \sqrt{2}}{5} t + \tfrac{14395}{304} t^2.
\end{align*}
For $i=1$, we establish the bound~\eqref{eqn:produce-some-c} with $c = \frac{19}{20}$.
To see this, for $t \leq \frac{1}{4}$,
\begin{align*}
    U_1 - \tfrac{19}{20} |V_1| &= \tfrac{1}{3t} (191 t^2 - 63 t + 6) > 0, \\
    W_1 - \tfrac{5}{19} |V_1| &= \tfrac{1}{570}( 34345 t^2 - 5852 t + 1440) > 0.
\end{align*}
Indeed, the first quadratic has discriminant $63^2 - 4 \cdot 191 \cdot 6 = - 615 < 0$, while the second one has
\[
5852^2 - 4 \cdot 34345 \cdot 1440 < (6 \cdot 10^3)^2 - 4 \cdot (3 \cdot 10^4) \cdot 10^3 < 4 \cdot 10^7 - 10^8 < - 5 \cdot 10^7 < 0.
\]
For $i=2$, we establish the bound~\eqref{eqn:produce-some-c} with $c = \frac{4}{3}$.
To see this, for $t \leq \frac{1}{4}$,
\begin{align*}
    U_2 - \tfrac{4}{3} |V_2| &= \tfrac{1}{36 t} [ (4803 \sqrt{2} - 36) t^2 - 1952 t + 108 \sqrt{2} ] > 0, \\
    W_2 - \tfrac{3}{16} |V_2| &= \tfrac{1}{9120} [ 431850 t^2 - (198303 \sqrt{2}+ 5130) t + 82840] > 0.
\end{align*}
Indeed, the quadratics have discriminants
\[
32(-10609 + 486 \sqrt{2}) < 0, \qquad 114(-565117013+17847270\sqrt{2})<0.
\]
Thus, $\theta_1(x)>0$ and $\theta_2(x)>0$ as claimed.

\smallskip \noindent \textbf{Case 2: $n=6$}.
In this case, $A_n = 0$ and $B_n = 20$.
Moreover, $t \leq \frac{1}{5}$.
We obtain the coefficients
\begin{align*}
    U_1(t)&=  2t^{-1} + \tfrac{5}{2} t + 55, \qquad & U_2(t) &= 2 \sqrt{5} \, t^{-1} + \tfrac{1}{2} (4 + \sqrt{5}) \, t + 55, \\
    V_1(t) &= -74 + \tfrac{115}{2} t, \qquad & V_2(t) &= -74+ \tfrac{1}{2} (119 \sqrt{5}-4) t, \\
    W_1(t) &= 19 - \tfrac{121}{6} t + \tfrac{18061}{304} t^2, \qquad & W_2(t) &= 19 - \tfrac{121\sqrt{5}}{6} t + \tfrac{11337}{304} t^2
\end{align*}
For $i=1$, we obtain the bound~\eqref{eqn:produce-some-c} with $c = 1$, using
\begin{align*}
    U_1 - |V_1| &= t^{-1} (60t^2 - 19t + 2) > 0, \\
    W_1 - \tfrac{1}{4} |V_1| &= \tfrac{1}{912} ( 54183 t^2 - 5282 t + 456) > 0.
\end{align*}
Indeed, the quadratics have discriminants
\[
19^2 - 4 \cdot 60 \cdot 2 = - 119 < 0, \qquad 5282^2 - 4 \cdot 54183 \cdot 456 = -70930268<0.
\]
Finally, for $i=2$, we establish the bound~\eqref{eqn:produce-some-c} with $c = \frac{4}{3}$.
To see this, we compute
\begin{align*}
    U_2 - \tfrac{4}{3} |V_2| &= \tfrac{1}{6t} [ (479 \sqrt{5} - 4) t^2 - 262 \, t + 12 \sqrt{5}] > 0, \\
    W_2 - \tfrac{3}{16} |V_2| &= \tfrac{1}{1824} [ 68022t^2 - (16435 \sqrt{5} + 684)t + 9348 ] > 0.
\end{align*}
The quadratics have negative discriminants,
\[
4(-11579+48 \sqrt{5}) < 0, \qquad  19(-62761297 + 1183320 \sqrt{5}) <0.
\]
Therefore, $\theta_1(x)>0$ and $\theta_2(x)>0$.
This completes the proof for $n \in \{ 5, 6\}$.

\smallskip \noindent \textbf{Case 3: $n \geq 7$}.
Finally, in this case, $A_n = \frac{n-2}{2}$ and $B_n = 4$.
We obtain the coefficients
\begin{align*}
    U_1(t) &= 2 t^{-1} + \tfrac{1}{2} t + \tfrac{28}{5} + \tfrac{n-2}{n-4} t + \tfrac{76}{5(n-2)}, \\
    V_1(t) &= - 4- \tfrac{152}{5(n-2)} - \tfrac{n-2}{n-4} t + ( 7 + \tfrac{18}{n-2} ) t, \\
    W_1(t) &= \tfrac{76}{5(n-2)} - \tfrac{18}{n-2} t + \tfrac{4}{n^2} t + \tfrac{811(n-2) + 405}{76(n-2)} t^2, \\
    U_2(t)  &= 2 t^{-1} \textstyle{\sqrt{\tfrac{n+4}{2}}} + \tfrac{1}{2} \textstyle{\sqrt{\tfrac{n+4}{2}}} t + \frac{28}{5} + \frac{n-2}{n-4} t + \frac{76}{5(n-2)}, \\
    V_2(t) &= - 4- \tfrac{152}{5(n-2)} - \tfrac{n-2}{n-4} t + \textstyle{\sqrt{\tfrac{n+4}{2}}} ( 7 + \tfrac{18}{n-2}) t , \\
    W_2(t) &= \tfrac{76}{5(n-2)} - \tfrac{18}{n-2} \textstyle{ \sqrt{\tfrac{n+4}{2}}} t + \tfrac{4 }{n^2} \textstyle{\sqrt{\tfrac{n+4}{2}}} t - \tfrac{405(n-2)^2-2434(n-2) - 4860}{304(n-2)} t^2.
\end{align*}
Here, $t = \sqrt{2b} \leq \frac{1}{n-2}$ and $V_1<0$.
Moreover, the coefficient of $t$ in $V_1(t)$ is positive, due to $\frac{n-2}{n-4}<7$.
Thus, we can discard terms with clear signs to find
\begin{equation}\label{eqn:A1-lower-bound}
U_1 \geq \tfrac{2}{t} \geq 2(n-2), \qquad |V_1| \leq 4 + \tfrac{152}{5(n-2)}, \qquad W_1 \geq \tfrac{76}{5(n-2)} - \tfrac{18}{n-2}t \geq \tfrac{1}{n-2} ( \tfrac{76}{5} - \tfrac{18}{n-2}).
\end{equation}
Consequently, for $n \geq 8$, we use the lower bound~\eqref{eqn:quadratic-lower-bound} and~\eqref{eqn:A1-lower-bound} to obtain the estimate
\begin{align*}
    \theta_1(x) &\geq \tfrac{1}{n-2} \bigl[ \tfrac{76}{5} - \tfrac{18}{n-2} - \tfrac{1}{8} ( 4 + \tfrac{152}{5(n-2)})^2 \bigr] = \tfrac{2}{25(n-2)^3} [ 165(n-2)^2 - 605(n-2) - 1444].
\end{align*}
The final expression is positive for $n \geq 8$.
For $n =7$, we use the slightly sharper estimates 
\[
U_1 \geq 2 \cdot 5  + \tfrac{28}{5} = \tfrac{78}{5}, \qquad W_1 \geq \tfrac{76}{25} - \tfrac{862}{245} \cdot \tfrac{1}{5} = \tfrac{2862}{1225} > \tfrac{58}{25}, \qquad |V_1| \leq \tfrac{252}{25}.
\]
Thus, for $n=7$, we arrive at $\theta_1(x) \geq \frac{58}{25} - \frac{1}{4} \frac{5}{78} ( \frac{781}{75})^2 = \frac{1124}{1625} > 0$.

The analysis of $\theta_2(x)$ is similar.
We first treat the case $n \geq 8$, using the bound $\frac{12}{5} < \sqrt{\frac{n+4}{2}} < \frac{n-2}{2}$.
The coefficient of $t$ in $V_2$ is positive, while $V_2 < 0$ is negative, due to 
\[
V_2 \leq - 4 - \tfrac{152}{5(n-2)} + \tfrac{1}{n-2} \tfrac{n-2}{2} ( 7 + \tfrac{18}{n-2}) = - \tfrac{1}{2} - \tfrac{107}{5(n-2)} < 0.
\]
Thus, we again obtain $|V_2| \leq 4 + \frac{152}{5(n-2)} \leq \frac{136}{15}$, while $U_2 \geq 2 \sqrt{\frac{n+4}{2}} t^{-1} \geq \frac{24}{5}(n-2)$.
Moreover, we use the bounds $\frac{12}{5} < \sqrt{\frac{n+4}{2}} < \frac{n-2}{2}$ and $t \leq \frac{1}{n-2}$ to control the negative terms of $W_2$ as
\begin{align*}
    \tfrac{405 (n-2)^2- 2434 (n-2) - 4860}{304(n-2)} \leq \tfrac{405}{304}(n-2), \qquad \tfrac{18}{n-2} \textstyle{\sqrt{\tfrac{n+4}{2}}} \, t \leq \tfrac{9}{n-2}, \qquad  \tfrac{405}{304}(n-2) t^2 \leq \tfrac{405}{304(n-2)}. 
\end{align*}
Finally, we can discard the positive term $\frac{4}{n^2} \sqrt{\frac{n+4}{2}} t$ from the expression for $W_2$ to obtain
\[
W_2 \geq \tfrac{1}{n-2} ( \tfrac{76}{5} - 9 - \tfrac{405}{304}) = \tfrac{7399}{1520(n-2)}.
\]
Combining these bounds, we arrive at
\begin{align*}
    \theta_2(x) &\geq W_2 - \tfrac{1}{4} U_2^{-1} V_2^2 \geq \tfrac{7399}{1520 (n-2)} - \tfrac{5}{96 (n-2)} ( \tfrac{136}{15})^2 = \tfrac{7399}{1520 (n-2)} - \tfrac{578}{135(n-2)} = \tfrac{24061}{41040 (n-2)}  > 0. 
\end{align*}
In the remaining case $n=7$, we use the above computations and the bound $t \leq \frac{1}{5}$ to compute
\begin{align*}
    W_2 &= \tfrac{76}{25} - \tfrac{431 \sqrt{22}}{245} t + \tfrac{1381}{304} t^2 \geq \tfrac{76}{25} - 9 \cdot \tfrac{1}{5} = \tfrac{31}{25}, \qquad |V_2| \leq 4 + \tfrac{152}{25} = \tfrac{252}{25}, \qquad U_2 \geq 5\sqrt{22} > \tfrac{70}{3}.
\end{align*}
Hence, $\theta_2(x) \geq \frac{31}{25} - \frac{1}{4} \frac{3}{70} ( \frac{252}{25})^2 = \frac{473}{3125} > 0$.
To summarize, we have proved that $\theta_1, \theta_2 > 0$ for every $n\geq 7$.
This completes the proof of our assertion.
\end{proof}

To establish the transversal invariance of the family $C_2(b)$, we first record a consequence of Lemmas A.2 and A.6 of~\cite{brendle-PIC}.
\begin{lemma}\label{lemma:weakly-two-positive}
Let $H$ be a weakly two-positive symmetric bilinear form.
Then,
\[
Z_{\lambda} ( H \owedge H) \geq - \tfrac{2}{n-4} (1-\lambda^2) (H_{11} + H_{22} ) ( \textup{tr} (H) - H_{11} - H_{22}).
\]
\end{lemma}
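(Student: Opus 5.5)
The plan is to compute $Z_\lambda(H\owedge H)$ explicitly in a well-chosen frame and then use weak two-positivity to control the only sign-indefinite contribution. From the definition of the Kulkarni--Nomizu product,
\[
(H\owedge H)_{ijij} = 2(H_{ii}H_{jj} - H_{ij}^2), \qquad (H\owedge H)_{1234} = 2(H_{13}H_{24} - H_{14}H_{23}).
\]
Hence
\[
\tfrac12 Z_\lambda(H\owedge H) = (H_{11}+H_{22})(H_{33}+\lambda^2 H_{44}) - (H_{13}^2+H_{23}^2) - \lambda^2(H_{14}^2+H_{24}^2) - 2\lambda(H_{13}H_{24}-H_{14}H_{23}).
\]
Since the formula has the $\lambda$-weighted structure of the PIC1 functional, this is the content of Lemma A.2 of \cite{brendle-PIC}: it expresses $Z_\lambda(H\owedge H)$ as a sum of a manifestly nonnegative term and a term of the form $-(1-\lambda^2)\cdot(\ldots)$. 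I would follow that route.

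First, I normalize the frame. Rotating in $\textup{span}\{e_1,e_2\}$ and in $\textup{span}\{e_3,e_4\}$ does not change $Z_\lambda$, since it is invariant under the $U(1)$ actions compatible with the complex structure. Using this freedom, I diagonalize the $2\times 2$ blocks, or arrange $H_{13}+H_{24}$ and $H_{14}-H_{23}$ as in \cite{brendle-PIC}*{Lemma 3.5}. The off-diagonal terms then combine into $-\frac12\bigl[(H_{13}+\lambda H_{24})^2+(H_{23}-\lambda H_{14})^2\bigr]$-type negative squares. These can be absorbed using the positivity of the $2\times2$ minors available when $H$ restricted to suitable planes is nonnegative.

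The obstruction is that $H$ itself is only weakly two-positive, not nonnegative. This is where Lemma A.6 of \cite{brendle-PIC} enters. For a weakly two-positive $H$, the possibly negative smallest eigenvalue is bounded below by $-\frac{1}{n-2}$ times the remaining trace. Concretely, I would write $H = H_+ - \mu\, P$ with $H_+\geq 0$ controlled in the relevant directions. I would then bound the defect in $H_{33}+\lambda^2 H_{44}$ compared with $H_{33}+H_{44}$ using the trace of $H$ on the complement $\{e_5,\dots,e_n\}$. Two-positivity applied to pairs $\{e_3,e_\alpha\}$ and $\{e_4,e_\alpha\}$, averaged over the $n-4$ vectors $e_\alpha$, $\alpha\geq 5$, gives
\[
H_{33}+H_{44} \geq -\tfrac{2}{n-4}\bigl(\textup{tr}(H)-H_{11}-H_{22}-H_{33}-H_{44}\bigr)
\]
and similar bounds. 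Solving these gives a lower bound for the weighted quantity $(H_{11}+H_{22})\cdot(\ldots)$ of exactly the form $-\frac{2}{n-4}(1-\lambda^2)(H_{11}+H_{22})(\textup{tr}H - H_{11}-H_{22})$. The factor $(1-\lambda^2)$ appears because at $\lambda=1$ the expression is the PIC functional, which is nonnegative for $H\owedge H$ whenever $H$ is two-positive. The bound is then linear-interpolated in $\lambda^2$.

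The main obstacle is the bookkeeping that produces exactly the constant $\frac{2}{n-4}$ and the factor $(1-\lambda^2)$. I would handle it by writing $Z_\lambda = \lambda^2 Z_1 + (1-\lambda^2)Z_0$ plus the cross term $2\lambda(1-\lambda)$ times a nonnegative square after completing squares. I would then use $Z_1(H\owedge H)\geq 0$ and bound $Z_0(H\owedge H)\geq 2(H_{11}+H_{22})H_{33}$-type quantities from below by $-\frac{2}{n-4}(H_{11}+H_{22})(\textup{tr}H-H_{11}-H_{22})$, via the averaged two-positivity estimate above.
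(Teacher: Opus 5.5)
Your expansion
\[
\tfrac12 Z_\lambda(H\owedge H)=(H_{11}+H_{22})(H_{33}+\lambda^2H_{44})-(H_{13}+\lambda H_{24})^2-(H_{23}-\lambda H_{14})^2
\]
is correct. The gap is that the proposal never controls the two negative squares when $\lambda<1$, and that is the entire content of the lemma. None of your mechanisms does this.
Two-positivity on coordinate pairs such as $\{e_3,e_\alpha\}$, $\{e_4,e_\alpha\}$ gives inequalities among diagonal entries only, so it cannot absorb $(H_{13}+\lambda H_{24})^2+(H_{23}-\lambda H_{14})^2$.
The interpolation step is false. One has $Z_\lambda(T)-\lambda^2Z_1(T)-(1-\lambda^2)Z_0(T)=-2\lambda(1-\lambda)T_{1234}$, which has no sign; for $T=H\owedge H$ it equals $-4\lambda(1-\lambda)(H_{13}H_{24}-H_{14}H_{23})$. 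So $Z_\lambda$ is not ``affine in $\lambda^2$ plus a square''.
Likewise, $Z_0(H\owedge H)=2(H_{11}+H_{22})H_{33}-2(H_{13}^2+H_{23}^2)$ is bounded \emph{above}, not below, by $2(H_{11}+H_{22})H_{33}$.
Your normalization is also off: $Z_\lambda$ is invariant under rotations of $\textup{span}\{e_1,e_2\}$, but not of $\textup{span}\{e_3,e_4\}$ once $\lambda<1$.
The structural reason the $2\times2$-minor idea stalls is this. At $\lambda=1$ the off-diagonal terms are handled by Cauchy--Schwarz for the Hermitian extension of $H$. That form is nonnegative on the totally isotropic plane spanned by $e_1+ie_2$ and $e_3+ie_4$ precisely because $H$ is weakly two-positive. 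For $\lambda<1$ the vector $e_3+i\lambda e_4$ is not isotropic, so this argument breaks down inside the four-plane.

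The missing idea is to tilt $e_4$ out of the four-plane. For $\alpha\ge5$ and $\mu=\sqrt{1-\lambda^2}$, the vector $e_3+i(\lambda e_4\pm\mu e_\alpha)$ is isotropic. Together with $e_1+ie_2$ it spans a totally isotropic plane, and Cauchy--Schwarz there, averaged over the sign, gives
\[
(H_{13}+\lambda H_{24})^2+(H_{23}-\lambda H_{14})^2+\mu^2(H_{1\alpha}^2+H_{2\alpha}^2)\le(H_{11}+H_{22})(H_{33}+\lambda^2H_{44}+\mu^2H_{\alpha\alpha}).
\]
Averaging over the $n-4$ indices $\alpha\ge5$, and using $H_{11}+H_{22}\ge0$ and $H_{33}+H_{44}\ge0$, yields the lemma. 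This is where both the factor $1-\lambda^2$ and the constant $\frac{1}{n-4}$ come from.

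The paper reaches the same conclusion through the two lemmas you cite, but their content differs from your description.
Lemma A.2 of \cite{brendle-PIC} gives $H\owedge H\in C_{\textup{PIC}}$.
Lemma A.6 gives $Z_\lambda(T)\ge-\frac{1-\lambda^2}{n-4}\bigl(\textup{Ric}(T)_{11}+\textup{Ric}(T)_{22}-2T_{1212}\bigr)$ for every $T\in C_{\textup{PIC}}$. This rests on the same tilting, applied to the PIC condition on the frames $\{e_1,e_2,e_3,\lambda e_4\pm\mu e_\alpha\}$.
The proof then ends by computing $\textup{Ric}(H\owedge H)_{11}+\textup{Ric}(H\owedge H)_{22}-2(H\owedge H)_{1212}=2(H_{11}+H_{22})(\textup{tr}(H)-H_{11}-H_{22})-2\sum_{p\ge3}(H_{1p}^2+H_{2p}^2)$.
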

\begin{proof}
Because $H$ is weakly two-positive,~\cite{brendle-PIC}*{Lemma A.2} shows that $H \owedge H \in C_{\textup{PIC}}$.
Therefore,~\cite{brendle-PIC}*{Lemma A.6} implies that
\[
Z_{\lambda} (H \owedge H) \geq - \tfrac{1-\lambda^2}{n-4} ( \textup{Ric}( H \owedge H)_{11} + \textup{Ric}(H \owedge H)_{22} - 2 (H \owedge H)_{1212})
\]
For the operator $H \owedge H$, the notation of Section~\ref{sec:preliminaries} produces
\begin{align*}
    \textup{Ric}(H \owedge H)_{ij} &= 2 \, [ \textup{tr}(H) \, H_{ij} - (H^2)_{ij} ] , \qquad (H \owedge H)_{ij k \ell} = 2 ( H_{ik} H_{j \ell} - H_{i \ell} H_{jk}).
\end{align*}
Combining these terms, we arrive at
\begin{align*}
& \textup{Ric}( H \owedge H)_{11} + \textup{Ric}(H \owedge H)_{22} - 2 \, (H \owedge H)_{1212} \\
&= 2 \, \textup{tr}(H) ( H_{11} + H_{22}) - 2 ( (H^2)_{11} + (H^2)_{22}) - 4 (H_{11}H_{22} - (H_{12})^2) \\
&= 2 \, (H_{11} + H_{22}) [ \textup{tr}(H) - H_{11} - H_{22}] - 2 \sum_{p=3}^n ( (H_{1p})^2 + (H_{2p})^2) \\
&\leq 2 \, (H_{11} + H_{22}) \, [ \textup{tr}(H) - H_{11} - H_{22}].
\end{align*}
The asserted inequality follows from combining these bounds.
\end{proof}

\begin{proposition}\label{prop:trans-inv-fam-2}
For every $n \geq 5$ and $0 < b \leq b_2(n)$, the family $C_2(b)$ is transversally invariant under the Hamilton ODE.
\end{proposition}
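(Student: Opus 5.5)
It suffices to show that $\cE_2(b)$ is transversally invariant under the modified ODE $\frac{d}{dt} S = \cQ(S) + D_{a,b}(S)$. The condition $\ell_{a,b}(S) \in C_1(b_1)$ is handled by the first family. Indeed, $\ell_{a,b}(S)$ evolves by Hamilton's ODE, and $C_1(b_1)$ is transversally invariant by Proposition~\ref{prop:transversally-invariant-family-1}. So the only remaining boundary faces are those where $\tilde{Z}_{\lambda}(S;F) = 0$ for some frame $F$ and some $\lambda \in [0,1]$. At such a face we may take $F$ and $\lambda$ to minimize $\tilde{Z}$ over frames and over $\rho \in [0,1]$.

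\textbf{Disposing of the $\lambda$ near $1$.} If $\lambda = 1$, then $\tilde{Z}_1 = Z_1(S;F) > 0$ by the first part of Proposition~\ref{prop:why-the-long-face}, so this face is never reached. For $\lambda \in [0,1)$, Proposition~\ref{prop:why-the-long-face} gives $\lambda \leq (\sqrt{1+\eta_n} - \sqrt{\eta_n})^2$. Together with Lemma~\ref{lemma:the-eta-values}, this yields a quantitative lower bound on $1-\lambda^2$ in terms of $\eta_n$. In particular we obtain
\[
\frac{2\lambda}{1-\lambda^2} \leq \frac{1}{2\sqrt{\eta_n(1+\eta_n)}},
\]
and hence $\frac{1+\lambda^2}{1-\lambda^2} = \frac{1+2\eta_n}{2\sqrt{\eta_n(1+\eta_n)}} \cdot (\ldots) \leq \sqrt{n+4}\,(\ldots)$. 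I expect exactly this to produce the range $y \in [\sqrt{2b}, \sqrt{(n+4)b}]$ appearing in Lemma~\ref{lemma:the-quadratic-family-2}, with $y$ a normalized multiple of $\sqrt{2b}$ divided by a function of $\lambda$.

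\textbf{Computing $\frac{d}{dt}\tilde{Z}_\lambda$.} Following \cite{brendle-PIC}*{\S 4}, I split the derivative into three pieces.
\begin{enumerate}[(i)]
\item The piece $\frac{d}{dt} Z_\lambda(S)$ coming from $\cQ(S)$. The PIC1-type first- and second-order conditions at a minimizing frame (as in \cite{brendle-PIC}*{Lemma A.5--A.6} and the Brendle--Schoen computation) give $Z_\lambda(\cQ(S)) \geq$ a negative multiple of $(1-\lambda^2)(\ldots)$. This negative part is controlled by $\tilde{Z}_\lambda = 0$, which says that $Z_\lambda(S) = -(1-\lambda^2)\sqrt{2b}\,[\mathrm{Ric}_{11}+\mathrm{Ric}_{22}+2B_n b\,\mathrm{scal}]$.
\item The piece $Z_\lambda(D_{a,b}(S))$. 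Here I use the formula~\eqref{eqn:D-a-b}. By Lemma~\ref{lemma:properties-II}~(ii), $\mathrm{Ric}(S)$ is weakly two-positive, so Lemma~\ref{lemma:weakly-two-positive} applies to the $\mathrm{Ric}\owedge\mathrm{Ric}$ term. This term produces the $-\tfrac{n-2}{n-4}\sqrt{2b}\,x(1-x)$ contribution. The positive $\kappa$ and $\mathring{\mathrm{Ric}}^2 \owedge \mathrm{id}$ terms are retained, using Lemma~\ref{lemma:properties-II}~(i).
\item The piece $(1-\lambda^2)\sqrt{2b}\,\frac{d}{dt}[\mathrm{Ric}_{11}+\mathrm{Ric}_{22}+2B_n b\,\mathrm{scal}]$. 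This is computed from~\eqref{eqn:evolve-ric-S} and~\eqref{eqn:ddt-scal}. It gives terms $P|\mathrm{Ric}|^2 + Q\,\mathrm{scal}^2$, multiplied by $2B_nb$, plus $S*\mathrm{Ric}$ contributions.
\end{enumerate}

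\textbf{Reducing to the scalar inequality.} After normalizing by $\mathrm{scal}(S)^2$ and $(1-\lambda^2)$, I introduce two variables:
\begin{itemize}
\item $x \in [0,1]$, the fraction $(\mathrm{Ric}_{11}+\mathrm{Ric}_{22})/\mathrm{scal}$, suitably rescaled, which lies in $[0,1]$ by weak two-positivity and Lemma~\ref{lemma:consequences-of-PIC};
\item $y$, as described above.
\end{itemize}
The cross terms in $\mathring{\mathrm{Ric}}$ are handled by completing the square against $P|\mathring{\mathrm{Ric}}|^2$. This produces the term $-\frac{(2B_n+1)^2}{2B_nP}\bigl(\frac{n-4}{2(n-2)}y^2+b\bigr)$. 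The remaining expression should be exactly the function of Lemma~\ref{lemma:the-quadratic-family-2}, which is strictly positive. This gives $\frac{d}{dt}\tilde{Z}_\lambda > 0$, so the face moves inward.

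\textbf{Main obstacle.} The hardest step is the bookkeeping in this reduction. One has to show that every term of $\frac{d}{dt}\tilde Z_\lambda$ can be bounded from below by the expression in Lemma~\ref{lemma:the-quadratic-family-2}. In particular, the unfavourable $\lambda \uparrow 1$ contributions (the $\frac{2\lambda}{1-\lambda^2}$ factors from the $S_{1234}$ terms) must be absorbed using only the bound from Proposition~\ref{prop:why-the-long-face}, which is where the upper endpoint $\sqrt{(n+4)b}$ comes from. I would first verify this matching in the case $n \geq 7$ against \cite{brendle-PIC}*{Proposition 4.8}, and then track how the new term $2B_nb\,\mathrm{scal}$ modifies each coefficient.
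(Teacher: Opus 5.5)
You have the paper's skeleton right. Only the faces $\tilde Z_\lambda(S)=0$ matter. Proposition~\ref{prop:why-the-long-face} and Lemma~\ref{lemma:the-eta-values} give $\lambda<1$ and $\sqrt{2b}\,\frac{1+\lambda^2}{1-\lambda^2}\le\sqrt{(n+4)b}$. Weak two-positivity and Lemma~\ref{lemma:weakly-two-positive} handle $\textup{Ric}(S)\owedge\textup{Ric}(S)$, and one completes a square and finishes with Lemma~\ref{lemma:the-quadratic-family-2}. However, the inequality that drives the computation is missing, and your item (i) does not supply it.

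The pair $(F,\lambda)$ minimizes the perturbed functional $\tilde Z_\rho(S;F)=Z_\rho(S;F)+(1-\rho^2)(H_{11}+H_{22})$, with $H=\sqrt{2b}\,[\textup{Ric}(S)+B_nb\,\textup{scal}(S)\,\textup{id}]$, not $Z_\rho$ itself. So the Brendle--Schoen argument at a minimum of $Z_\lambda$ is not available. The first and second variations of $\tilde Z$ bring in $H$, and the correct statement is \cite{brendle-PIC}*{Proposition A.8}:
\[
Z_\lambda(\cQ(S)) \geq -Z_\lambda(H\owedge H) - 2(1-\lambda^2)\bigl((S\ast H)_{11}+(S\ast H)_{22}\bigr) + (1+\lambda^2)(H_{11}+H_{22})^2 .
\]
Its $H$-terms are not a multiple of $1-\lambda^2$.

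This inequality is what removes the $S\ast\textup{Ric}$ terms you leave unresolved in item (iii). The contribution $2\sqrt{2b}\,(1-\lambda^2)\,[(S\ast\textup{Ric}(S))_{11}+(S\ast\textup{Ric}(S))_{22}]$ from $\textup{Ric}(\cQ(S))$ has no sign. It cannot be bounded by anything depending only on $x,y,P,Q,b$; it disappears only by exact cancellation against the $S\ast H$ term above.

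The other pieces are then matched as in the paper:
\begin{enumerate}[(a)]
\item $-Z_\lambda(H\owedge H)$ is absorbed by splitting $D_{a,b}(S)=(D_{a,b}(S)-H\owedge H)+H\owedge H$.
\item The part $8(1+\lambda^2)B_n^2b^3\,\textup{scal}(S)^2$ of $(1+\lambda^2)(H_{11}+H_{22})^2$ cancels the $-2B_n^2b^3\,\textup{scal}(S)^2\,\textup{id}\owedge\textup{id}$ term in $D_{a,b}(S)-H\owedge H$.
\item The rest of $(1+\lambda^2)(H_{11}+H_{22})^2$ produces the favourable $b^{-1}y\,x^2$ term of Lemma~\ref{lemma:the-quadratic-family-2}.
\end{enumerate}
Without this inequality, the bookkeeping you call the main obstacle cannot close.

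Relatedly, your diagnosis of the $\lambda\uparrow 1$ difficulty is off. All dependence on $S_{1234}$ is absorbed into the inequality above, and no bound on $\frac{2\lambda}{1-\lambda^2}$ is ever used. The loss comes from terms of $Z_\lambda(D_{a,b}(S)-H\owedge H)$ that carry no factor $1-\lambda^2$, notably $-4(2B_n+1)b^2\,\textup{scal}(S)\,(\textup{Ric}(S)_{33}+\lambda^2\textup{Ric}(S)_{44})$. These must be absorbed by the $(1-\lambda^2)$-weighted term $2B_nb\sqrt{2b}\,(1-\lambda^2)P\sum_{i\geq 3}(\textup{Ric}(S)^2)_{ii}$ from the scalar evolution. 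This uses the covariance inequality and completing the square in $z=\textup{Ric}(S)_{33}+\lambda^2\textup{Ric}(S)_{44}$. That step is where $y=\sqrt{2b}\,\frac{1+\lambda^2}{1-\lambda^2}$ and the term $-\frac{(2B_n+1)^2}{2B_nP}\bigl(\frac{n-4}{2(n-2)}y^2+b\bigr)$ come from. Only your (correct) bound on $\frac{1+\lambda^2}{1-\lambda^2}$ is needed.
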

\begin{proof}
    Since $C_2(b) = \ell_{a,b}(\cE_2(b))$, it suffices to prove the the cone $\cE_2(b)$ is transversally invariant under the modified ODE $\frac{d}{dt} S = \cQ(S) + D_{a,b}(S)$.
    Namely, we show that for $S \in \partial \cE_2(b) \setminus \{ 0 \}$, the vector $\cQ(S) + D_{a,b}(S)$ lies in the interior of the tangent cone $T_S \cE_2(b)$.
    By Proposition~\ref{prop:transversally-invariant-family-1}, the family $C_1(b_1)$ is transversally invariant under the Hamilton ODE, so it remains to examine the face $\{ \tilde{Z}_{\lambda}(S) = 0 \}$ of the cone $\cE_2(b)$.
    We claim that whenever $S \in \cE_2(b)$ satisfies $\tilde{Z}_{\lambda}(S) = 0$ for an orthonormal four-frame $\{ e_1,e_2, e_3, e_4\}$ and some $\lambda \in [0,1]$, then $\frac{d}{dt} \big\rvert_{t=0} \tilde{Z}_{\lambda}(S)>0$.
    
    In what follows, we set $\sigma := \textup{scal}(S) > 0$ 
    By Proposition~\ref{prop:why-the-long-face}, we have $\tilde{Z}_{\rho}(S;F)>0$ for every four-frame $F$ and every $\rho > ( \sqrt{1+\eta_n(b)} - \sqrt{\eta_n(b)})^2$, with $\eta_n$ defined in~\eqref{eqn:eta-n(b)}.
    Hence, our $\lambda$ has 
    \[
    \lambda \in \bigl[ 0, \textstyle{ ( \sqrt{1+\eta_n(b)} - \sqrt{\eta_n(b)})^2 } \bigr] \subseteq [ 0,1).
    \]
    In particular, $\lambda$ is an interior minimum of the map $\rho \mapsto \tilde{Z}_{\rho}(S;F)$.
    This allows us apply~\cite{brendle-PIC}*{Proposition A.8}, where we take $H$ to be the symmetric bilinear form 
    \[
    H = \sqrt{2b} \, [ \textup{Ric}(S) + B_n b \, \textup{scal}(S) \, \textup{id}].
    \]
    In our notation, this result implies that $\tilde{Z}_{\rho}(S) = A_{\rho}(S) + (1-\rho^2) (H_{11} + H_{22})$ and
    \begin{equation}\label{eqn:simon-says}
        \begin{split}
        & Z_{\lambda}(\cQ(S)) + Z_{\lambda}( H \owedge H) + 2 (1-\lambda^2) ( (S \ast H)_{11} + (S \ast H)_{22}) - (1+\lambda^2) (H_{11} + H_{22})^2 \geq 0.
        \end{split}
    \end{equation}
    for this particular orthonormal four-frame and this particular $\lambda \in [0,1)$.

    The operator $S$ evolves by the ODE $\frac{d}{dt} S = \cQ(S) + \ell_{a,b}(S)$, so we compute the derivative as
    \begin{align*}
    \tfrac{d}{dt} \big\rvert_{t = 0} \tilde{Z}_{\lambda} (S) &=  Z_{\lambda}( \cQ(S)) + Z_{\lambda} (D_{a,b}(S)) + 2 B_n b \sqrt{2b}  (1-\lambda^2) \, \textup{scal}( \cQ(S) + D_{a,b}(S)) \\
    & \quad + (1-\lambda^2) \, \sqrt{2b} \, [ \textup{Ric}(\cQ(S) + D_{a,b}(S))_{11} + \textup{Ric}( \cQ(S) +D_{a,b}(S))_{22} ] 
    \end{align*}
    following~\cite{brendle-PIC}*{Theorem 4.6}.
    We now use the expressions
    \begin{align*}
        \textup{Ric} ( \cQ(S)) &= 2 S \ast \textup{Ric}(S), \qquad \textup{scal}(\cQ(S)) = 2 \, |\textup{Ric}(S)|^2, \\
        S \ast H &= \sqrt{2b} \, S \ast \textup{Ric}(S) + B_n b \sqrt{2b} \, \sigma \, \textup{Ric}(S).
    \end{align*}
    Substituting these terms into the computation of $\frac{d}{dt} \big \rvert_{t=0} \tilde{Z}_{\lambda}(S)$, we obtain
    \begin{equation}\label{eqn:d-dt-Z-lambda}
        \begin{split}
        \tfrac{d}{dt} \big\rvert_{t=0} \tilde{Z}_{\lambda}(S) &= Z_{\lambda}(\cQ(S)) + Z_{\lambda}( D_{a,b}(S)) + 2 (1-\lambda^2) ( (S \ast H)_{11} + (S \ast H)_{22}) \\
        & \quad + \sqrt{2b} \, (1-\lambda^2) \, ( \textup{Ric}(D_{a,b}(S))_{11} + \textup{Ric}(D_{a,b}(S))_{22} ) \\
        & \quad + 2 B_n b \sqrt{2b} (1-\lambda^2) \, \bigl[ ( P \, |\textup{Ric}(S)|^2 + Q \sigma^2) - \sigma ( \textup{Ric}(S)_{11} + \textup{Ric}(S)_{22}) \bigr]
        \end{split}
    \end{equation}
    Here, we collected the terms $\textup{scal}(D_{a,b}(S)) + 2 \, |\textup{Ric}(S)|^2 = P |\textup{Ric}(S)|^2 + Q \sigma^2$ according to~\eqref{eqn:ddt-scal}.
    
    In the next step, we expand the second line of the above expression in terms of $\textup{Ric}(S)$, recalling from~\eqref{eqn:kappa} the coefficient $\kappa$ that appears in the expansion of $D_{a,b}(S)$ given in~\eqref{eqn:D-a-b}.
    Using the expression~\eqref{eqn:ric-dab} and $a-b = A_n b^2$, we can bound the two relevant components of $\textup{Ric}(D_{a,b}(S))$ by 
    \begin{equation}\label{eqn:Ric-Dab11-22}
        \begin{split}
        &\textup{Ric}(D_{a,b}(S))_{11} + \textup{Ric}(D_{a,b}(S))_{22} + 4b [ ( \textup{Ric}(S)^2)_{11} + ( \textup{Ric}(S)^2)_{22}] \\
        &= \tfrac{8}{n} ( b + \tfrac{n-2}{2} a) \, \sigma \, ( \textup{Ric}(S)_{11}+ \textup{Ric}(S)_{22}) + 4 ( b^2 + (n-1) \kappa) \, |\overset{\circ}{\textup{Ric}}(S)|^2 + \tfrac{8}{n^2} A_n b^2 \sigma^2 \\
        &\geq 4 b \sigma \, ( \textup{Ric}(S)_{11} + \textup{Ric}(S)_{22}).
        \end{split}
    \end{equation}
    In the last step, we wrote $a = b + A_n b^2$ and discarded the non-negative terms 
    \[
    4 (b^2+(n-1) \kappa) \, |\overset{\circ}{\textup{Ric}}(S)|^2 + 4(n-2) A_n b^2 \, \sigma \, ( \textup{Ric}(S)_{11}+ \textup{Ric}(S)_{22}) + \tfrac{8}{n^2} A_n b^2 \sigma^2 \geq 0.
    \]
    Likewise, using the expression~\eqref{eqn:D-a-b} for $D_{a,b}(S)$ and writing $a = b + A_n b^2$, we obtain
    \begin{equation}\label{eqn:Dab(S)-HH}
        \begin{split}
        D_{a,b}(S) - H \owedge H &= (n-2) b^2 \, \textup{Ric}(S) \owedge \textup{Ric}(S)+ 2b^2 \, \textup{Ric}(S)^2 \owedge \textup{id}  \\
        & \quad - 2( 2B_n+1) \, b^2 \sigma \, \textup{Ric}(S) \owedge \textup{id} + \tfrac{4}{n} A_n b^2 \sigma \, \textup{Ric}(S) \owedge \textup{id} \\
        & \quad + \bigl[ \kappa \, |\overset{\circ}{\textup{Ric}}(S)|^2 + \bigl( \tfrac{n - 2 A_n}{n^2} b^2 - 2 B_n^2 b^3 \bigr) \sigma^2 \bigr] \, \textup{id} \owedge \textup{id}.
        \end{split}
    \end{equation}
    On the other hand, because $\textup{Ric}(S)$ is weakly two-positive, by Lemma~\hyperref[lemma:properties-II]{\ref{lemma:properties-II}~$(ii)$}, we find
    \begin{equation}\label{eqn:throw-out-2-nonneg}
        \begin{split}
            Z_{\lambda}( \textup{Ric}(S) \owedge \textup{id}) &= \textup{Ric}(S)_{11} + \textup{Ric}(S)_{33} + \textup{Ric}(S)_{22} + \textup{Ric}(S)_{33} \\
            & \quad + \lambda^2 ( \textup{Ric}(S)_{11} + \textup{Ric}(S)_{44} + \textup{Ric}(S)_{22} + \textup{Ric}(S)_{44}) \geq 0.
        \end{split}
    \end{equation}
    Moreover, we can apply Lemma~\ref{lemma:weakly-two-positive} for the weakly two-positive form $H = \textup{Ric}(S)$ to bound
\begin{equation}\label{eqn:simon-says-A.6}
       Z_{\lambda}(\textup{Ric}(S) \owedge \textup{Ric}(S)) \geq - \tfrac{2}{n-4}(1-\lambda^2) ( \textup{Ric}(S)_{11} + \textup{Ric}(S)_{22}) ( \sigma - \textup{Ric}(S)_{11} - \textup{Ric}(S)_{22}).
\end{equation}
We now estimate $Z_{\lambda} (D_{a,b}(S) - H \owedge H)$ by applying $Z_{\lambda}$ to the both sides of~\eqref{eqn:Dab(S)-HH} and using the expressions~\eqref{eqn:d-dt-Z-lambda}, \eqref{eqn:throw-out-2-nonneg} and~\eqref{eqn:simon-says-A.6} to estimate the right-hand side.
Furthermore, we discard the non-negative terms $4(1+\lambda^2)\, \kappa \, |\overset{\circ}{\textup{Ric}}(S)|^2$ and $\frac{4}{n} A_n b^2 \sigma \, Z_{\lambda}(\textup{Ric}(S) \owedge \textup{id})$.
Thus, we obtain
\allowdisplaybreaks{
\begin{equation}\label{eqn:koitame-kanw-Dab}
    \begin{split}
        & Z_{\lambda} ( D_{a,b}(S) - H \owedge H) + (1+\lambda^2) (H_{11} + H_{22})^2 - 2b^2 Z_{\lambda}( \textup{Ric}(S)^2 \owedge \textup{id}) \\
        & \quad +  4 (2B_n+1) b^2 \sigma ( \textup{Ric}(S)_{33} +\lambda^2 \textup{Ric}(S)_{44}) \\
        &\geq - \tfrac{2(n-2)}{n-4} b^2(1-\lambda^2) ( \textup{Ric}(S)_{11} + \textup{Ric}(S)_{22}) (\sigma - \textup{Ric}(S)_{11} - \textup{Ric}(S)_{22}) \\
        & \quad + 2b(1+\lambda^2) ( \textup{Ric}(S)_{11} + \textup{Ric}(S)_{22})^2 + \tfrac{4}{n^2} (n- 2A_n) b^2(1+\lambda^2) \sigma^2 \\
    & \quad + 2 (2B_n-1) b^2 (1+\lambda^2) \, \sigma ( \textup{Ric}(S)_{11} + \textup{Ric}(S)_{22}) .
    \end{split}
\end{equation}}
Here, we expanded the expression of~\eqref{eqn:simon-says} in the form
\[
(H_{11} + H_{22})^2 = 2 b \, \bigl[ ( \textup{Ric}(S)_{11} +\textup{Ric}(S)_{22})^2 + 4 B_n b \, ( \textup{Ric}(S)_{11} + \textup{Ric}(S)_{22}) \sigma + 4 B_n^2 b^2 \sigma^2 ].
\]
In particular, the positive contribution $8 (1+\lambda^2) B_n^2 b^3 \sigma^2$ from the expression~\eqref{eqn:simon-says} precisely cancels the negative contribution from $Z_{\lambda} (-2 B_n^2 b^3\sigma^2 \, \textup{id} \owedge \textup{id})$.

In the next step, we use the expansion
\[
Z_{\lambda} ( \textup{Ric}(S)^2 \owedge \textup{id}) = (1+\lambda^2) ( ( \textup{Ric}(S)^2)_{11} + ( \textup{Ric}(S)^2)_{22}) + 2 \bigl( ( \textup{Ric}(S)^2)_{33} + \lambda^2 ( \textup{Ric}(S)^2)_{44} \bigr).
\]
Adding $2B_n b \sqrt{2b}(1-\lambda^2) P \, |\textup{Ric}(S)|^2$ and discarding $4b^2 [( \textup{Ric}(S)^2)_{33} + ( \textup{Ric}(S)^2)_{44}] \geq 0$, we obtain
\begin{equation}\label{eqn:bound-A-lambda-Ric(S)2-id}
    \begin{split}
    & 2 b^2 Z_{\lambda} ( \textup{Ric}(S)^2 \owedge \textup{id}) - 4 b \sqrt{2b}(1-\lambda^2) \, \bigl[ (\textup{Ric}(S)^2)_{11} + ( \textup{Ric}(S)^2)_{22} \bigr] \\
    & \quad - 2 B_n b \sqrt{2b}(1-\lambda^2) P \, \sum_{i=3}^n ( \textup{Ric}(S)^2)_{ii}  + 2B_n b \sqrt{2b}(1-\lambda^2) P \, |\textup{Ric}(S)|^2 \\
    &\geq 2b \, [ b(1+\lambda^2) + \sqrt{2b} (1-\lambda^2) (B_n P - 2)] \, \bigl[ ( \textup{Ric}(S)^2)_{11} + ( \textup{Ric}(S)^2)_{22} \bigr] . 
    \end{split}
\end{equation}
Recall from~\eqref{eqn:b2(n)-parameters} and Lemma~\hyperref[lemma:properties-II]{\ref{lemma:properties-II}~$(iii)$} that $B_n \geq 4$ and $P > \frac{19}{10}$, so $B_n P-2 \geq 0$ and the coefficient of the right-hand side is non-negative.
Hence, by the Cauchy-Schwarz inequality,
\[
( \textup{Ric}(S)^2)_{11} + ( \textup{Ric}(S)^2)_{22} \geq (\textup{Ric}(S)_{11})^2 + ( \textup{Ric}(S)_{22})^2 \geq \tfrac{1}{2} ( \textup{Ric}(S)_{11} + \textup{Ric}(S)_{22})^2.
\]
Let us write $x := \frac{\textup{Ric}(S)_{11} + \textup{Ric}(S)_{22}}{\sigma}$, so that $\sum_{i=3}^n \textup{Ric}(S)_{ii} =\sigma (1-x)$.
Because $\textup{Ric}(S)$ is weakly two-positive, we have $x \geq 0$; by the same token, $0 \leq \sum_{i=3}^n \textup{Ric}(S)_{ii} = \sigma(1-x)$ implies that $x \leq 1$.
Using this bound in~\eqref{eqn:bound-A-lambda-Ric(S)2-id}, we obtain
\begin{equation}\label{eqn:2b2Y-better}
    \begin{split}
            & 2 b^2 Z_{\lambda} ( \textup{Ric}(S)^2 \owedge \textup{id}) - 4 b \sqrt{2b}(1-\lambda^2) \, \bigl[ (\textup{Ric}(S)^2)_{11} + ( \textup{Ric}(S)^2)_{22} \bigr] \\
    & \quad - 2 B_n b \sqrt{2b}(1-\lambda^2) P \, \sum_{i=3}^n ( \textup{Ric}(S)^2)_{ii}  + 2B_n b \sqrt{2b}(1-\lambda^2) P \, |\textup{Ric}(S)|^2 \\
        &\geq b \sqrt{2b} (1-\lambda^2) [ \tfrac{1}{2} \sqrt{2b} \tfrac{1+\lambda^2}{1-\lambda^2} + (B_n P - 2)] \, x^2. 
    \end{split}
\end{equation}
Next, we recall an elementary covariance inequality: for $(n-2)$-vectors $\alpha, \beta$, it holds that
\[
\left| \sum_{i=1}^{n-2} \alpha_i \beta_i - \frac{1}{n-2} \Bigl( \sum_{i=1}^{n-2} \alpha_i \Bigr) \Bigl( \sum_{i=1}^{n-2} \beta_i \Bigr) \right|^2 \leq \left[ \sum_{i=1}^{n-2} \alpha_i^2 - \frac{1}{n-2} \Bigl( \sum_{i=1}^{n-2} \alpha_i \Bigr)^2 \right] \, \left[ \sum_{i=1}^{n-2} \Bigl( \beta_i - \frac{1}{n-2} \sum_{j=1}^{n-2} \beta_j \Bigr)^2 \right].
\]
We apply this inequality to the $(n-2)$-vectors $\alpha = (1, \lambda^2, 0, \dots, 0)$ and $\beta = ( \textup{Ric}(S)_{33}, \dots, \textup{Ric}(S)_{nn})$.
This produces the bound
\[
\Bigl[ \textup{Ric}(S)_{33} + \lambda^2 \textup{Ric}(S)_{44} - \frac{1+\lambda^2}{n-2} \sigma(1-x) \Bigr]^2 \leq \Bigl( 1+\lambda^4 - \frac{(1+\lambda^2)^2}{n-2} \Bigr) \sum_{i=3}^n \Bigl( \textup{Ric}(S)_{ii} - \frac{\sigma (1-x)}{n-2} \Bigr)^2.
\]
Setting $z := \textup{Ric}(S)_{33} + \lambda^2 \textup{Ric}(S)_{44}$, we therefore obtain
\[
\sum_{i=3}^n (\textup{Ric}(S)^2)_{ii} \geq \frac{\sigma^2(1-x)^2}{n-2} + \frac{ \bigl[ z - \frac{1+\lambda^2}{n-2} \sigma(1-x) \bigr]^2 }{1+\lambda^4 - \frac{(1+\lambda^2)^2}{n-2}}.
\]
This implies that
\begin{align*}
    & 2 B_n b \sqrt{2b} (1-\lambda^2) P \sum_{i=3}^n ( \textup{Ric}(S)^2)_{ii} - 4 (2B_n+1) b^2 \sigma ( \textup{Ric}(S)_{33} + \lambda^2 \textup{Ric}(S)_{44}) \\
    &\geq \frac{2B_n b \sqrt{2b} (1-\lambda^2) P}{n-2} \sigma^2(1-x)^2+  \frac{2 B_n b \sqrt{2b} (1-\lambda^2) P}{1+\lambda^4 - \frac{(1+\lambda^2)^2}{n-2}} \Bigl[ z - \frac{1+\lambda^2}{n-2} \sigma(1-x) \Bigr]^2 - 4 (2B_n+1) b^2 \sigma z.
\end{align*}
Next, we complete the square in $z$ to obtain
\begin{align*}
   & \frac{2 B_n b \sqrt{2b} (1-\lambda^2) P}{1+\lambda^4 - \frac{(1+\lambda^2)^2}{n-2}} \Bigl[ z - \frac{1+\lambda^2}{n-2} \sigma(1-x) \Bigr]^2 - 4 (2B_n+1) b^2 \sigma z \\
   &\geq - \frac{4(2B_n+1) b^2(1+\lambda^2)}{n-2} \sigma^2(1-x) - \frac{2 (2B_n+1)^2 b^3}{B_n \sqrt{2b} (1-\lambda^2)P} \Bigl( 1+\lambda^4 - \frac{(1+\lambda^2)^2}{n-2} \Bigr) \sigma^2.
\end{align*}
To abbreviate the ensuing computations, it is convenient to define 
\[
y := \sqrt{2b} \tfrac{1+\lambda^2}{1-\lambda^2}
\]
and collect terms using
\[
1+\lambda^4 - \tfrac{(1+\lambda^2)^2}{n-2} = \tfrac{1}{2} (1-\lambda^2)^2 + \tfrac{n-4}{2(n-2)} (1+\lambda^2)^2 = \tfrac{1}{2b} (1-\lambda^2)^2 \bigl( b + \tfrac{n-4}{2(n-2)} y^2\bigr).
\]
Consequently, the previous bounds can be summarized as
\begin{equation}\label{eqn:summarize-crazy-bounds}
    \begin{split}
    & 2 B_n b \sqrt{2b} (1-\lambda^2) P \sum_{i=3}^n ( \textup{Ric}(S)^2)_{ii} - 4 (2B_n+1) b^2 \sigma ( \textup{Ric}(S)_{33} + \lambda^2 \textup{Ric}(S)_{44}) \\
    &\geq b \sqrt{2b} (1-\lambda^2) \sigma^2 \, \left[ \frac{2 B_n P}{n-2}(1-x)^2 - \frac{2(2B_n+1)}{n-2} y (1-x) - \frac{(2B_n+1)^2}{2 B_n P} \Bigl( \frac{n-4}{2(n-2)} y^2 + b \Bigr) \right]
    \end{split}
\end{equation}
In the next step, we will combine these terms to estimate the right-hand side of the evolution $\frac{d}{dt}\big\rvert_{t=0} \tilde{Z}_{\lambda}(S)$ from~\eqref{eqn:d-dt-Z-lambda}.
We sum the inequalities
\begin{align*}
    \eqref{eqn:simon-says} + \sqrt{2b} \, (1-\lambda^2) \cdot \eqref{eqn:Ric-Dab11-22} + \eqref{eqn:koitame-kanw-Dab} + \eqref{eqn:2b2Y-better} + \eqref{eqn:summarize-crazy-bounds} + 2 B_n b \sqrt{2b}(1-\lambda^2) (Q-x)  \sigma^2.
\end{align*}
Here, we recall that $x = \frac{\textup{Ric}(S)_{11} + \textup{Ric}(S)_{22}}{\sigma}$.
We observe the cancellations between terms on the left-hand side of this inequality, using $Z_{\lambda}(D_{a,b}(S)) = Z_{\lambda}(D_{a,b}(S) - H \owedge H) + Z_{\lambda}( H \owedge H)$, which produce the expression~\eqref{eqn:d-dt-Z-lambda} for $\frac{d}{dt} \big\rvert_{t=0} \tilde{Z}_{\lambda}$.
Thus, after assembling these estimates in~\eqref{eqn:d-dt-Z-lambda} and dividing by $b \sqrt{2b} (1-\lambda^2) \sigma^2>0$, we obtain
\begin{align*}
    & \bigl( b \sqrt{2b} (1-\lambda^2) \sigma^2 \bigr)^{-1} \tfrac{d}{dt} \big\rvert_{t=0} \tilde{Z}_{\lambda}(S) \\
    &\geq \bigl[ ( \tfrac{1}{2} +b^{-1}) y + B_n P - 2 \bigr] x^2 + [ 4 - 2B_n + (2B_n-1) y] \, x - \tfrac{n-2}{n-4} \sqrt{2b} \, x(1-x) + \tfrac{2}{n-2} B_n P (1-x)^2 \\
    & \quad - \tfrac{2}{n-2}(2B_n+1) y (1-x) - \tfrac{(2B_n+1)^2}{2B_n P} \bigl( \tfrac{n-4}{2(n-2)} y^2 + b \bigr) + 2 B_nQ + \tfrac{2(n - 2A_n)}{n^2} y.
\end{align*}
In Proposition~\ref{prop:why-the-long-face}, we proved the bound $\frac{1+\lambda}{1-\lambda} \leq \sqrt{\frac{1+\eta_n}{\eta_n}}$.
Consequently,
\[
\Bigl( \frac{1+\lambda^2}{1-\lambda^2} \Bigr)^2 = \frac{1}{4} \Bigl( \frac{1+\lambda}{1-\lambda} + \frac{1-\lambda}{1+\lambda} \Bigr)^2 \leq \frac{1}{4} \left( \sqrt{\frac{1+\eta_n}{\eta_n}} + \sqrt{\frac{\eta_n}{1+\eta_n}} \right)^2 = \frac{(1+2 \eta_n)^2}{4 \eta_n(1+\eta_n)}
\]
Here, we used the fact that the function $s \mapsto s + \frac{1}{s}$ is increasing in $s = \frac{1+\lambda}{1-\lambda} \geq 1$.
Thus,
\begin{equation}\label{eqn:y-quadratic}
y := \sqrt{2b} \tfrac{1+\lambda^2}{1-\lambda^2}, \qquad \sqrt{2b} \leq y \leq \sqrt{b} \tfrac{1+ 2 \eta_n}{\sqrt{2 \eta_n (1+\eta_n)}} \leq \textstyle{ \sqrt{(n+4) b}}
\end{equation}
upon applying Lemma~\ref{lemma:the-eta-values}.
Finally, since $(x,y) \in [0,1] \times [ \sqrt{2b} , \sqrt{(n+4) b} ]$, Lemma~\ref{lemma:the-quadratic-family-2} shows that the right-hand side of the final lower bound for $\frac{d}{dt} \big\rvert_{t=0} \tilde{Z}_{\lambda}(S)$ is strictly positive.
Therefore, the family $C_2(b) = \ell_{a,b} ( \cE_2(b))$ is transversally invariant under Hamilton's ODE as claimed.
\end{proof}

\section{Connecting the two families of cones}\label{section:joining-condition}

We finally prove that the two families $C_1(b)$ and $C_2(b)$ can be joined at their endpoints.
Since $C_2(b) \subseteq C_1(b_1)$ by definition, it suffices to prove that $\ell_{a_2,b_2}^{-1} (C_1(b_1)) \subseteq \cE_2(b_2)$.
This property amounts to $\tilde{Z}_{\lambda}(S;F) \geq 0$ for every algebraic curvature operator $S \in S^2_B(\fs \fo(n))$ with $\ell_{a_2,b_2}(S) \in C_1(b_1)$, every orthonormal four-frame $F$, and every $\lambda \in [0,1]$.

We consider the operator $T = \ell_{a_1, b_1}^{-1} \ell_{a,b}(S) \in \cE_1(b_1)$, which satisfies~\eqref{eqn:S-T-p} and has $T \in C_{\textup{PIC}}$ and $\textup{Ric}(T)_{11} + \textup{Ric}(T)_{22} + \frac{4}{5n} b_1\,  \textup{scal}(T) \geq 0$.
    Also, $S$ and $T$ have the same Weyl tensor, while their traceless Ricci tensors and scalar curvatures are related by~\eqref{eqn:ric-scal-ST-equalities}.
Hence,
    \begin{equation}\label{eqn:Ric(S)-expression}
    \begin{split}
    \overset{\circ}{\textup{Ric}}(S) &= ( 1 + (n-2) p) \, \overset{\circ}{\textup{Ric}}(T), \qquad \textup{scal}(S) = (1 + 2(n-1) q) \, \textup{scal}(T), \\
    \textup{Ric}(S)_{ii} &= (1 + (n-2) p) \, \textup{Ric}(T)_{ii} + \tfrac{1}{n} ( 2(n-1) q - (n-2) p) \, \textup{scal}(T).
    \end{split}
    \end{equation}
The verification of the condition $C_1(b_1) = C_2(b_2)$ for connecting the two cones relies on a precise weighting of the various PIC inequalities for the tensor $T$.
To describe these, we use the parameters of the second family, introduced in~\eqref{eqn:b2(n)-parameters}, and consider the auxiliary constants
\begin{equation}\label{eqn:endpoint-parameters}
    r_2(n) := B_n b_2(n) \textstyle{\sqrt{2 b_2(n)}}, \qquad u_2(n) := \textstyle{\sqrt{2 b_2(n)}} \, [1 + (n-2) p_2(n)] \, .
\end{equation}
Substituting our definitions, we directly obtain
\begin{equation}\label{eqn:direct-substitution-n=5,6}
\begin{aligned}
    n&=5: \quad & p_2 &= \tfrac{1}{21}, \quad & q_2 &= \tfrac{19}{360}, \quad & r_2 &= \tfrac{5}{32}, \quad & u_2 &= \tfrac{2}{7}. \\
    n&=6: \quad & p_2 &= \tfrac{17}{432}, \quad & q_2 &= \tfrac{223}{5120}, \quad & r_2 &= \tfrac{2}{25}, \quad & u_2 &= \tfrac{25}{108}.
\end{aligned}
\end{equation}
Moreover, for $n \geq 7$, we obtain
\begin{equation}\label{eqn:direct-substitution-ngeq7}
    \begin{aligned}
    p_2 &= \tfrac{n-4}{2(n-2)(2n-3)}, \quad & q_2 &= \tfrac{9n^2 - 51n+62}{8 (4n^3-20n^2 + 37n-25)}, \qquad & r_2 &= \tfrac{2}{(n-2)^3}, \quad & u_2 &= \tfrac{5}{2(2n-3)}.
    \end{aligned}
\end{equation}
The constants~\eqref{eqn:b2(n)-parameters} family $C_2(b)$ are designed to satisfy
\begin{equation}\label{eqn:u2-p2}
    u_2 - p_2 = \tfrac{1}{n-2}, \qquad \text{for } \; n \geq 7.
\end{equation}
We will prove a lower bound for a tensor combination obtained when joining the two cones.
\begin{proposition}\label{prop:joining-lower-bound}
    For $n \geq 5$, we define the expression
\[
    F_n(T) := \tfrac{2}{n-2} (T_{1212} + T_{1313} + T_{2323}) + (p_2 + u_2 - \tfrac{1}{n-2})( \textup{Ric}(T)_{11} + \textup{Ric}(T)_{22}) + 2 p_2 \, \textup{Ric}(T)_{33}.
\]
Moreover, we define dimensional constants $d_n$ by
\begin{equation}\label{eqn:d5-d6-dn}
d_5 := \frac{7}{16}, \qquad d_6 := \frac{149}{648}, \qquad d_n := \frac{[-n^2+13n - 26]_+}{2(n-2)(n-4)(2n-3)} \quad \text{for } \; n \geq 7.
\end{equation}
Then, every algebraic operator $T \in \cE_1(b_1)$ satisfies $F_n(T) \geq - d_n \, \textup{scal}(T)$.
\end{proposition}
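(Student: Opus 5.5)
The plan is to reduce $F_n(T)$, via the PIC inequalities for $T$, to a linear expression in the diagonal Ricci entries of $T$, and then to minimize that expression over the Ricci data permitted by membership in $\cE_1(b_1)$. Set $\sigma := \textup{scal}(T)$, $A := T_{1212}+T_{1313}+T_{2323}$ and $R_3 := \textup{Ric}(T)_{11}+\textup{Ric}(T)_{22}+\textup{Ric}(T)_{33}$. As in the proof of Lemma~\ref{lemma:consequences-of-PIC}, we have $R_3 = 2A + \Sigma$, where $\Sigma := \sum_{\alpha\ge 4}(T_{1\alpha1\alpha}+T_{2\alpha2\alpha}+T_{3\alpha3\alpha})$. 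That proof provides two inequalities, $\Sigma \ge 0$ and \eqref{eqn:3-ric}, i.e. $(n-3)A+\Sigma\ge 0$. Substituting $\Sigma = R_3 - 2A$ into \eqref{eqn:3-ric} gives $(n-5)A \ge -R_3$, so $A \ge -\frac{R_3}{n-5}$ for $n\ge 6$. I would rather not use this crude bound alone. Instead I would take a convex combination: a weight $\theta$ on the identity $A=\frac12(R_3-\Sigma)$, paired with the pinching information on $\Sigma$ coming from \eqref{eqn:Tijkl} applied to the mixed frames $(e_i,e_j,e_\alpha,e_\beta)$. The weights would be chosen so that what remains is
\[
F_n(T)\ \ge\ c_1\bigl(\textup{Ric}(T)_{11}+\textup{Ric}(T)_{22}\bigr) + c_2\,\textup{Ric}(T)_{33} - c_3\,\sigma
\]
with explicit dimensional constants $c_i$. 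When $n\ge 7$ I expect this to be possible using only \eqref{eqn:u2-p2}, i.e. $p_2+u_2-\frac{1}{n-2}=2p_2$. Then $F_n$ becomes $\frac{2}{n-2}A+2p_2R_3$, and the bound $A\ge -\frac{R_3}{n-5}$ handles the regime where $R_3$ is small.

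The second step is to minimize this linear form over admissible Ricci spectra. The constraints are the following: $\textup{Ric}(T)_{11}+\textup{Ric}(T)_{22}\ge -\frac{4}{5n}b_1\sigma$ from \eqref{eqn:ric-11-first}; $R_3\ge 0$ and $\textup{tr}_{\Pi_k}\textup{Ric}(T)\ge0$ for $k\ge3$ from Lemma~\ref{lemma:consequences-of-PIC}; and the bounds on individual eigenvalues and on $k$-plane traces from Lemma~\ref{lemma:pinching-from-PIC} with $\alpha=-\frac{2}{5}b_1$. In particular, $\textup{Ric}(T)_{33}$ is bounded below via \eqref{eqn:h-bound-1}. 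Since the objective is linear and the feasible region is a polytope, the minimum sits at a vertex, and evaluating there should produce $d_n$. The positive part $[-n^2+13n-26]_+$ in the definition of $d_n$ suggests that for large $n$ the vertex value is already nonnegative, while for $7\le n\le 10$ the extremal configuration is the one with $\textup{Ric}(T)_{11}+\textup{Ric}(T)_{22}$ at its minimum and $R_3$ as small as allowed.

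For $n=6$, the inequality $A\ge -R_3$ is too weak, so I would retain the full set of four-frame PIC inequalities on $\{e_1,e_2,e_3,e_\alpha\}$ with separate weights, and also use $\sum_{i\le3,\ \alpha\ge4}T_{i\alpha i\alpha}\ge0$ together with the identity for $\sigma$. A small linear program in these weights, which I would set up by hand, should give $d_6=\frac{149}{648}$.

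The main obstacle is $n=5$ (Case I). Here \eqref{eqn:3-ric} carries no information about $A$, so PIC plus the linear Ricci conditions cannot bound $A$ from below at all, and the argument must use condition (ii) of Definition~\ref{def:first-cone-family}. My first attempt would be as follows. Write $T=U+N$, rotate so that $e_1,e_2$ are adapted as in Proposition~\ref{prop:evolution-second-barrier}, and apply Lemma~\ref{lemma:bound-for-t-and-s} to the pairs $\{e_1,e_2\}$, $\{e_1,e_3\}$ and $\{e_2,e_3\}$. This yields lower bounds for the rescaled curvatures $\frac{2T_{ijij}}{\sigma}$ in terms of $h$ and the two-Ricci sums. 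Then \eqref{eqn:S-define-ON} gives $h\ge (n-2)b_1\sigma^{-2}(\textup{Ric}(T)_{jj}-\textup{Ric}(T)_{ii})^2$, which converts the loss from unequal Ricci entries into a quadratic penalty. The resulting expression is a quadratic in the Ricci differences, and I would minimize it by completing the square, in the same way as for $\cF$ in Lemma~\ref{lemma:important-properties}. The hope is that the minimum equals $-\frac{7}{16}\sigma$. If the three pairs are not sufficient, I would fall back on using the pair $\{e_1,e_2\}$ alone, bounding the remaining sectional curvatures via \eqref{eqn:Tijkl}.
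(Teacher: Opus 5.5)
There is a genuine gap. The lower bounds on $A=T_{1212}+T_{1313}+T_{2323}$ that the statement requires come from PIC inequalities involving the block $W:=\sum_{4\le\alpha<\beta\le n}T_{\alpha\beta\alpha\beta}$. The Ricci data cannot separate $W$ from $\Sigma$: after averaging, only $\Sigma+2W=\textup{scal}(T)-R_3$ is visible. Your reduction to Ricci data therefore discards exactly this information, and for $6\le n\le 10$ no vertex evaluation can return $d_n$.

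Concretely, for $a<0$ take the diagonal tensor $T_{ijij}=a$ ($i<j\le 3$), $T_{i\alpha i\alpha}=-a$, $T_{\alpha\beta\alpha\beta}=\frac{3a}{n-4}$ ($\alpha,\beta\ge 4$). Then $\textup{Ric}(T)=\textup{diag}(\frac13,\frac13,\frac13,0,\dots,0)\,\textup{scal}(T)$ and $A=-\frac{R_3}{n-5}$. Everything you invoke holds: PIC on frames in $\textup{span}\{e_1,e_2,e_3,e_\alpha\}$, $\Sigma\ge0$, \eqref{eqn:ric-11-first}, and Lemmas~\ref{lemma:consequences-of-PIC} and~\ref{lemma:pinching-from-PIC}. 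For $n\ge7$ this tensor even satisfies every averaged coordinate-frame PIC inequality except those inside $\textup{span}\{e_4,\dots,e_n\}$. Yet $F_n=-\frac{2(n-4)}{n-5}\,d_n\,\textup{scal}(T)$ for $7\le n\le10$, and $F_6=-2d_6\,\textup{scal}(T)$.

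For $7\le n\le 10$ the missing input is \eqref{eqn:eqn-VII}. PIC restricted to $\textup{span}\{e_4,\dots,e_n\}$, which has dimension at least $4$, gives $W\ge0$, i.e.\ $A\ge R_3-\frac12\textup{scal}(T)$. Since $W=A-R_3+\frac12\textup{scal}(T)$, one gets
\[
F_n+d_n\,\textup{scal}(T)=\tfrac{2(1+(n-2)p_2)}{(n-2)(n-4)}\bigl[(n-5)A+R_3\bigr]+\tfrac{2(1-(n-2)(n-5)p_2)}{(n-2)(n-4)}\,W\ge 0 .
\]
The extremal case is therefore where the two lower bounds on $A$ cross, at $R_3=\frac{n-5}{2(n-4)}\textup{scal}(T)$. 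It is not at ``$R_3$ as small as allowed'': at $R_3=0$ one has $F_n\ge0$.

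For $n=6$ the paper uses
$F_6+\frac{149}{648}\textup{scal}(T)=2\bigl[\frac{181}{324}\eqref{eqn:eqn-I}+\frac{649}{1296}\eqref{eqn:eqn-II}+\frac{199}{648}\eqref{eqn:eqn-V}+\frac{31}{81}\eqref{eqn:eqn-VI}\bigr]$. Here \eqref{eqn:eqn-V} and \eqref{eqn:eqn-VI} come from frames with three vectors in $\textup{span}\{e_4,e_5,e_6\}$, and they are the only source of the $W$-part of $\textup{scal}(T)$. Neither is on your list.

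For $n\ge 11$ your ingredients suffice ($A\ge-\frac{R_3}{n-5}$, $R_3\ge0$, $p_2>\frac{1}{(n-2)(n-5)}$), exactly as in the paper.

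For $n=5$ you correctly see that condition (ii) of Definition~\ref{def:first-cone-family} is needed, but you apply the pinching to the wrong pairs. Take $T_{ijij}=-\frac14\textup{scal}(T)$ ($i<j\le3$), $T_{i\alpha i\alpha}=\frac14\textup{scal}(T)$, $T_{4545}=-\frac14\textup{scal}(T)$, so that $\textup{Ric}(T)=\textup{diag}(0,0,0,\frac12,\frac12)\,\textup{scal}(T)$. This satisfies all averaged coordinate-frame PIC inequalities, with equality in \eqref{eqn:eqn-I}--\eqref{eqn:eqn-IV}, and all Ricci conditions, and it gives $F_5=-\frac12\textup{scal}(T)$. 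Here $e_1,e_2,e_3$ have equal Ricci curvature. More generally, $R_3=0$ together with \eqref{eqn:ric-11-first} confines these three Ricci curvatures to an interval of length $\frac1{25}\textup{scal}(T)$. So your penalties $h_{ij}\ge\frac14\textup{scal}(T)^{-2}(\textup{Ric}(T)_{ii}-\textup{Ric}(T)_{jj})^2$ are vacuous in this regime. Lemma~\ref{lemma:bound-for-t-and-s}, which for $n=5$ reads $2T_{ijij}\ge(h_{ij}-\frac12)\textup{scal}(T)$, is attained with $h_{ij}=0$. You therefore cannot get past the PIC-only value $-\frac12$.

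The Ricci gap that must be penalized is the one between the blocks $\{e_1,e_2,e_3\}$ and $\{e_4,e_5\}$. The paper proceeds as follows:
\begin{enumerate}[(i)]
    \item It sets $\Delta(T)=\frac12(\textup{Ric}(T)_{44}+\textup{Ric}(T)_{55})-\frac13R_3$.
    \item It verifies the identity $F_5+\frac38\textup{scal}+\frac14\Delta=\frac{11}{12}\eqref{eqn:eqn-I}+\frac{85}{84}\eqref{eqn:eqn-II}+\frac5{21}\eqref{eqn:eqn-III}+\frac{16}{21}\eqref{eqn:eqn-IV}$ on all of $S^2(\fs\fo(5))$.
    \item It applies this identity to $U$, which gives a non-negative contribution, and to $N$.
    \item The $N$-part contains $\frac16\sum_{i\le3}\sum_{\alpha=4,5}\sum_{k\notin\{i,\alpha\}}(N_{ikik}+N_{\alpha k\alpha k})$, which is at least $\frac{\Delta(T)^2}{4\,\textup{scal}(T)}$ by the pinching on the six mixed pairs $\{e_i,e_\alpha\}$ and Cauchy--Schwarz.
    \item Completing the square in $\Delta$ gives $-\frac7{16}$.
\end{enumerate}

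Relatedly, your remark that PIC gives no lower bound on $A$ when $n=5$ is inaccurate. Adding \eqref{eqn:eqn-III} and its analogues for the pairs $\{1,3\}$ and $\{2,3\}$ gives $A\ge R_3-\frac34\textup{scal}(T)$. Like \eqref{eqn:eqn-V}--\eqref{eqn:eqn-VII}, this comes from frames with two or more vectors in the complementary block, which is exactly what your reduction drops.
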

\begin{proof}
We prove the bound separately in the cases $n=5$, $n=6$, and $n \geq 7$.
For this, we take the average of the PIC inequalities under sign reversal over four-frames as in~\eqref{lemma:consequences-of-PIC}, and then average over the indices indicated below, to obtain
\allowdisplaybreaks{
\begin{align}
    \tag{I} 0&\leq \frac{1}{2} ( T_{1313} + T_{2323}) + \frac{1}{2(n-3)} \sum_{\alpha = 4}^n (T_{1 \alpha 1 \alpha} + T_{2 \alpha 2 \alpha}), \label{eqn:eqn-I} \\
    \tag{II} 0 &\leq T_{1212} + \frac{1}{2} (T_{1313} + T_{2323}) + \frac{1}{2(n-3)} \sum_{\alpha = 4}^n (T_{1 \alpha 1 \alpha} + T_{2 \alpha 2 \alpha}) + \frac{1}{n-3} \sum_{ \alpha = 4}^n T_{3 \alpha 3 \alpha}, \label{eqn:eqn-II} \\
    \tag{III} 0 &\leq T_{1212} + \frac{1}{n-3} \sum_{\alpha=4}^n (T_{1 \alpha 1 \alpha} + T_{2 \alpha 2 \alpha}) + \frac{2}{(n-3)(n-4)} \sum_{4 \leq \alpha < \beta \leq n} T_{\alpha \beta \alpha \beta}, \label{eqn:eqn-III} \\
    0&\leq \frac{1}{2} (T_{1313} + T_{2323}) + \frac{1}{2(n-3)} \sum_{\alpha=4}^n (T_{1 \alpha 1 \alpha} + T_{2 \alpha 2 \alpha}) + \frac{1}{n-3} \sum_{\alpha=4}^n T_{3 \alpha 3 \alpha} \tag{IV} \label{eqn:eqn-IV}\\
    & \quad + \frac{2}{(n-3)(n-4)} \sum_{4 \leq \alpha <\beta \leq n} T_{\alpha \beta \alpha \beta}. \notag \intertext{for every $n \geq 5$.
    Moreover, for $n \geq 6$, we also obtain} 
    0 &\leq \frac{1}{n-3} \sum_{\alpha=4}^n T_{3 \alpha 3 \alpha} + \frac{2}{(n-3)(n-4)} \sum_{4 \leq \alpha < \beta \leq n} T_{\alpha \beta \alpha \beta}, \tag{V} \label{eqn:eqn-V} \\
    0&\leq \frac{1}{2(n-3)} \sum_{\alpha=4}^n (T_{1 \alpha 1 \alpha} + T_{2 \alpha 2 \alpha}) + \frac{2}{(n-3)(n-4)} \sum_{4 \leq \alpha < \beta \leq n} T_{\alpha \beta \alpha \beta}, \tag{VI} \label{eqn:eqn-VI}.
\end{align}}
Finally, for $n \geq 7$, we can also restrict the PIC inequalities to $\textup{span} \{ e_4, \dots , e_n \}$ with non-negative scalar curvature, as in Lemma~\ref{lemma:consequences-of-PIC} and~\cite{brendle-PIC}*{Lemma A.4}.
Thus, for $n \geq 7$, we in fact have
\begin{equation}\label{eqn:eqn-VII}
\tag{VII} \qquad 0 \leq \frac{2}{(n-3)(n-4)} \sum_{4 \leq \alpha<\beta \leq n } T_{\alpha \beta \alpha \beta}.
\end{equation}
For brevity, we denote $\tau := \textup{scal}(T)$ in what follows.

\smallskip \noindent \textbf{The case $n=5$}. \label{step:pic-n=5}
For a tensor $T \in S^2( \fs \fo(n))$, we define the expression
\begin{align*}
\Delta(T) &:= \frac{1}{2} ( \textup{Ric}(T)_{44} + \textup{Ric}(T)_{55}) - \frac{1}{3} ( \textup{Ric}(T)_{11} + \textup{Ric}(T)_{22} + \textup{Ric}(T)_{33}) \\
&= \frac{1}{6} \sum_{i=3}^3 \sum_{\alpha=4}^5 ( \textup{Ric}(T)_{\alpha \alpha} - \textup{Ric}(T)_{ii} )
\end{align*}
We notice from~\eqref{eqn:direct-substitution-n=5,6} that the constants $p_2, u_2$ satisfy $p_2+u_2 = \frac{1}{3}$, so $F_5(T)$ produces
\begin{equation}\label{eqn:F5(U)}
F_5(T) + \tfrac{3}{8} \textup{scal} (T) + \tfrac{1}{4} \Delta(T) = \tfrac{11}{12} \cdot \eqref{eqn:eqn-I} + \tfrac{85}{84} \cdot \eqref{eqn:eqn-II} + \tfrac{5}{21} \cdot \eqref{eqn:eqn-III} + \tfrac{16}{21} \cdot \eqref{eqn:eqn-IV}.
\end{equation}
Because $T \in \cE_1(b_1)$, we can decompose $T = U + N$ inside $S^2( \fs \fo(n))$, where $U \in C_\textup{PIC}$ and $N \geq 0$ are as in Definition~\ref{def:first-cone-family} and need not satisfy the first Bianchi identity.
Moreover, at $b_1 = \frac{1}{12}$, the condition~\eqref{eqn:S-define-ON} assumes the form
\begin{equation}\label{eqn:pinching-at-b1.for.n=5}
    (\textup{Ric}(T)_{\alpha \alpha} - \textup{Ric}(T)_{\beta \beta} )^2 \leq 4 \, \tau \sum_{k \not\in \{ \alpha, \beta \} } (N_{ \alpha k \alpha k } + N_{ \beta k \beta k} )
\end{equation}
for all $\{ e_{\alpha}, e_{\beta} \}$ in an orthonormal $5$-frame.
Using the Cauchy-Schwarz inequality and~\eqref{eqn:pinching-at-b1.for.n=5}, we obtain
\begin{equation}\label{eqn:Delta-squared}
    (\Delta(T))^2 \leq \frac{1}{6} \sum_{i=1}^3 \sum_{\alpha=4}^5 ( \textup{Ric}(T)_{\alpha \alpha } - \textup{Ric}(T)_{ii})^2 \leq \frac{4 \tau}{6} \sum_{i=1}^3 \sum_{\alpha =4}^5 \sum_{k \not\in \{ i,\alpha \} } (N_{ikik} + N_{\alpha k \alpha k})
\end{equation}
Consequently, we can apply the decomposition~\eqref{eqn:F5(U)} and use $U \in C_{\textup{PIC}}$ to obtain
\begin{align*}
    F_5(T) + \tfrac{3}{8} \tau + \tfrac{1}{4} \Delta(T) &\geq F_5(N) + \tfrac{3}{8} \textup{scal} (N) + \tfrac{1}{4} \Delta(N) \\
    &= \frac{1}{6} \sum_{i=1}^3 \sum_{\alpha =4}^5 \sum_{k \not\in \{ i, \alpha \} } (N_{ikik} + N_{\alpha k \alpha k}) +\frac{7}{12} \Bigl( N_{1212} + \frac{1}{2} \sum_{i=1}^2 \sum_{\alpha=4}^5 N_{i \alpha i \alpha}  \Bigr) \\
    & \quad + \frac{19}{28} (N_{1313} + N_{2323}) + \frac{65}{168} (N_{3434} + N_{3535}) \\
    &\geq \tfrac{1}{4 \tau} ( \Delta(T))^2.
\end{align*}
In the last step, we used the inequality~\eqref{eqn:Delta-squared} to estimate the first term, and discarded the remaining non-negative terms due to $N \geq 0$.
We therefore obtain
\[
F_5(T) + \tfrac{7}{16} \tau \geq \tfrac{1}{16} \tau - \tfrac{1}{4} \Delta(T) + \tfrac{1}{4 \tau} (\Delta(T))^2 = \tfrac{1}{4 \tau} \bigl[ \Delta(T) - \tfrac{1}{2} \tau \bigr]^2 \geq 0.
\]
This inequality shows that $F_5(T) \geq - \frac{7}{16} \textup{scal}(T)$ as claimed.

\smallskip \noindent \textbf{The case $n=6$}.
The constants $p_2, u_2$ given in~\eqref{eqn:direct-substitution-n=5,6} satisfy $p_2+u_2 - \frac{1}{4} = \frac{1}{48}$ and $2p_2 = \frac{17}{216}$, so
\[
F_6(T) = \tfrac{1}{2} (T_{1212} + T_{1313} + T_{2323}) + \tfrac{1}{48} ( \textup{Ric}(T)_{11} + \textup{Ric}(T)_{22}) + \tfrac{17}{216} \textup{Ric}(T)_{33}.
\]
We observe the non-negative combination of PIC inequalities
\[
F_6(T) + \tfrac{149}{648} \, \textup{scal}(T) = 2 \, \bigl[ \tfrac{181}{324} \cdot \eqref{eqn:eqn-I} + \tfrac{649}{1296} \cdot \eqref{eqn:eqn-II} + \tfrac{199}{648} \cdot \eqref{eqn:eqn-V} + \tfrac{31}{81} \cdot \eqref{eqn:eqn-VI} \bigr] \geq 0.
\]
Therefore, $F_6(T) \geq - \frac{149}{648} \, \textup{scal}(T) = - d_6 \, \textup{scal}(T)$ as claimed.

\smallskip \noindent \textbf{The case $n \geq 7$}.
Finally, for $n \geq 7$, we have $u_2 - p_2 = \frac{1}{n-2}$, hence
\[
F_n(T) = \tfrac{2}{n-2} (T_{1212} + T_{1313} + T_{2323}) + 2 p_2 \, ( \textup{Ric}(T)_{11} + \textup{Ric}(T)_{22} + \textup{Ric}(T)_{33}).
\]
Since $\frac{n-4}{2(2n-3)} < \frac{1}{n-5}$ for $n \leq 10$, we have $p_2 < \frac{1}{(n-2)(n-5)}$.
Thus, for $7 \leq n \leq 10$, we can use the non-negative combination of PIC inequalities
\[
F_n(T) + d_n \, \textup{scal}(T) = \tfrac{2(n-3) (1 + (n-2) p_2)}{(n-4)(n-2)} [ \eqref{eqn:eqn-I} + \eqref{eqn:eqn-II}] + \tfrac{n-3}{n-2} [ 1 - (n-2)(n-5) p_2 ] \cdot \eqref{eqn:eqn-VII} \geq 0.
\]
For $n \geq 11$, it holds that $\frac{n-4}{2(2n-3)} > \frac{1}{n-5}$, so we have $p_2 > \frac{1}{(n-2)(n-5)}$.
In this case, we use
\begin{align*}
& \tfrac{2}{n-2} (T_{1212} + T_{1313} + T_{2323}) +  \tfrac{2}{(n-2)(n-5)} ( \textup{Ric}(T)_{11} + \textup{Ric}(T)_{22} + \textup{Ric}(T)_{33}) \\
&= \tfrac{2(n-3)}{(n-2)(n-5)} \cdot [ \eqref{eqn:eqn-I} + \eqref{eqn:eqn-II}] \geq 0.
\end{align*}
On the other hand, Lemma~\ref{lemma:consequences-of-PIC} shows that $\textup{Ric}(T)$ is weakly $3$-positive, because $T \in C_{\textup{PIC}}$.
Therefore,
\begin{align*}
F_n(T) &= \tfrac{2(n-3)}{(n-2)(n-5)} \cdot [ \eqref{eqn:eqn-I} + \eqref{eqn:eqn-II}] +  2 (p_2 - \tfrac{1}{(n-2)(n-5)}) ( \textup{Ric}(T)_{11} + \textup{Ric}(T)_{22} + \textup{Ric}(T)_{33}) \geq 0.
\end{align*}
Thus, $F_n(T) \geq 0$, corresponding to $d_n=0$.
This completes the proof of our assertion.
\end{proof}

\begin{lemma}\label{lemma:A-lambda-lower-bound}
For every algebraic operator $T \in \cE_1(b_1)$, it holds that
\begin{equation}\label{eqn:A-lambda(T)-brendle-inequality}
\begin{split}
    Z_{\lambda}(T) &\geq \tfrac{1}{n-2} (1-\lambda^2) \, [ 2 \, (T_{1212} + T_{1313} + T_{2323}) - \textup{Ric}(T)_{11} - \textup{Ric}(T)_{22}]\, .
\end{split}
\end{equation}
\end{lemma}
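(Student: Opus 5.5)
The plan is to use only $T\in C_{\textup{PIC}}$; the other defining conditions of $\cE_1(b_1)$ should not be needed. I expect the inequality to be the $(n-2)$-normalized counterpart of~\cite{brendle-PIC}*{Lemma A.6}, already used in Lemma~\ref{lemma:weakly-two-positive}. My first attempt would be to locate the corresponding statement in the appendix of~\cite{brendle-PIC} and check that its hypotheses are exactly weak PIC. I do not know the reference well enough to be sure such a statement exists there, so I also plan a self-contained proof.

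Fix the frame $F=\{e_1,e_2,e_3,e_4\}$ and complete it to an orthonormal basis $\{e_1,\dots,e_n\}$. Write
\[
A:=T_{1313}+T_{2323},\qquad B:=T_{1414}+T_{2424},\qquad C:=\sum_{\alpha=5}^n (T_{1\alpha1\alpha}+T_{2\alpha2\alpha}).
\]
Expanding the Ricci components gives
\[
2(T_{1212}+T_{1313}+T_{2323})-\textup{Ric}(T)_{11}-\textup{Ric}(T)_{22}=A-B-C .
\]
By the Bianchi identity, $Z_\lambda(T)=A+\lambda^2B-2\lambda T_{1234}$. So the claim is that the quadratic
\[
q(\lambda):=\Bigl(1-\tfrac{1-\lambda^2}{n-2}\Bigr)A+\Bigl(\lambda^2+\tfrac{1-\lambda^2}{n-2}\Bigr)B+\tfrac{1-\lambda^2}{n-2}\,C-2\lambda T_{1234}
\]
is non-negative on $[0,1]$. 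At $\lambda=1$ this is just $Z_1(T;F)\geq 0$.

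For general $\lambda$ I would write $q(\lambda)$ as a combination of PIC quantities with non-negative coefficients. Three families are available.
\begin{itemize}
\item The two signed inequalities $Z_1(T;(e_1,e_2,e_3,\pm e_4))=A+B\mp 2T_{1234}\geq 0$. With weights $\mu,\nu\geq0$ and $\mu-\nu=\lambda$, these absorb the term $-2\lambda T_{1234}$.
\item The averaged inequalities~\eqref{eqn:Tijkl}: $A+(T_{1\alpha1\alpha}+T_{2\alpha2\alpha})\geq 0$ and $B+(T_{1\alpha1\alpha}+T_{2\alpha2\alpha})\geq0$ for $\alpha\geq5$. Summed over $\alpha$, these give $(n-4)A+C\geq0$ and $(n-4)B+C\geq0$.
\item The inequality $A+B\geq 0$.
\end{itemize}
The first family alone is not enough. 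The naive choice $\mu=\tfrac{(1+\lambda)^2}{4}$, $\nu=\tfrac{(1-\lambda)^2}{4}$ leaves the remainder $\tfrac{1-\lambda^2}{2}(A-B)$. Its difference from the right-hand side is $\tfrac{n-4}{2(n-2)}(1-\lambda^2)(A-B)+\tfrac{1-\lambda^2}{n-2}C$, which cannot be controlled when $B\gg A$.

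The remedy is to move weight between $\mu$ and $\nu$, keeping $\mu-\nu=\lambda$ but increasing $\mu+\nu$. The excess coefficient of $B$ is then compensated by $(n-4)B+C\geq0$, and the excess coefficient of $A$ by $(n-4)A+C\geq0$. The specific weight $\tfrac{1}{n-2}$ suggests this balance: with coefficient $\tfrac{1-\lambda^2}{n-2}$ in front of $C$, exactly $n-4$ copies of $A$ or $B$ pair with $C$. I would therefore try the ansatz
\[
q(\lambda)=\mu Z_1(F)+\nu Z_1(F')+x\,[(n-4)A+C]+y\,[(n-4)B+C]+w\,(A+B),
\]
where $F'=(e_1,e_2,e_3,-e_4)$. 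I would match the coefficients of $A$, $B$, $C$ and $T_{1234}$, and then check that the resulting $\mu,\nu,x,y,w$ are non-negative for all $\lambda\in[0,1]$.

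The main obstacle is this coefficient matching. The constraint $\nu\geq0$ forces $\mu+\nu\geq\lambda$. For $\lambda$ near $1$ this may conflict with the required coefficient of $B$, which is $\lambda^2+\tfrac{1-\lambda^2}{n-2}\approx\lambda^2$. If the linear system has no non-negative solution, I would fall back on a second-variation argument. I would minimize $Z_\lambda(T;\cdot)$ over orthonormal frames, and at a minimizing frame use the first- and second-order conditions under rotations $e_4\mapsto\cos\theta\, e_4+\sin\theta\, e_\alpha$ and $e_3\mapsto\cos\theta\, e_3+\sin\theta\, e_\alpha$, averaged over $\alpha\geq5$. These are the same conditions used in the proof of~\cite{brendle-PIC}*{Lemma A.6}, and they yield exactly the extra averaged terms that replace $B$ by $\tfrac{1}{n-2}$-weighted traces.
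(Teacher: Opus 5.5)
There is a genuine gap: the linear ansatz you propose cannot work, and the fallback does not supply the missing ingredient. Since $A+B=\tfrac12[Z_1(T;F)+Z_1(T;F')]$, the only effective unknowns are $\mu-\nu$, $x$, $y$ and $s:=\mu+\nu+w$. Matching the coefficients of $T_{1234}$, $C$, $A$, $B$ then forces
\[
\mu-\nu=\lambda,\qquad x=\tfrac{1-\lambda^2}{n-2},\qquad y=0,\qquad s=\tfrac{1+(n-3)\lambda^2}{n-2}.
\]
The conditions $\nu,w\ge0$ require $s\ge\lambda$, i.e.\ $((n-3)\lambda-1)(\lambda-1)\ge0$. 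So the system has no admissible solution for $\lambda\in(\tfrac1{n-3},1)$.

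This is not an artifact of the ansatz. Take $A=B=T_{1234}=1$ and $T_{1\alpha1\alpha}+T_{2\alpha2\alpha}=-1$ for all $\alpha\ge5$, so that $C=-(n-4)$. These values satisfy every inequality on your list, yet give $q(\lambda)=\tfrac{2}{n-2}((n-3)\lambda-1)(\lambda-1)<0$ on that interval. So PIC on the frames $(e_1,e_2,e_3,\pm e_4)$, $(e_1,e_2,e_3,\pm e_\alpha)$, $(e_1,e_2,e_4,\pm e_\alpha)$ alone does not imply the lemma. The fallback is not a proof as stated. Moreover, minimizing $Z_\lambda(T;\cdot)$ by itself cannot handle a right-hand side that also depends on the frame.

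The missing idea is to apply the WPIC inequality to a frame that tilts $e_4$ toward $e_\alpha$ by exactly the angle with $\cos\theta=\lambda$. For $\alpha\ge5$, evaluate $Z_1(T;\cdot)\ge0$ on $\{e_1,e_2,e_3,\lambda e_4\pm\sqrt{1-\lambda^2}\,e_\alpha\}$ and average over the sign. The cross terms involving $T_{141\alpha}+T_{242\alpha}$ and $T_{123\alpha}$ cancel, leaving $Z_\lambda(T)+(1-\lambda^2)(T_{1\alpha1\alpha}+T_{2\alpha2\alpha})\ge0$. Averaging over $\alpha$ gives $Z_\lambda(T)\ge-\tfrac{1-\lambda^2}{n-4}\,C$; this is the argument of \cite{brendle-PIC}*{Lemma A.6}.

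Your own ``naive'' computation already provides the other half:
\[
Z_\lambda(T)-\tfrac12(1-\lambda^2)(A-B)=\tfrac12(1+\lambda^2)(A+B)-2\lambda T_{1234}\ge(1+\lambda^2)|T_{1234}|-2\lambda T_{1234}\ge0 .
\]
Adding $\tfrac{n-4}{n-2}$ times the first bound and $\tfrac{2}{n-2}$ times the second gives exactly $\tfrac{1-\lambda^2}{n-2}(A-B-C)$. This is the paper's proof, and no variational argument is needed.
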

\begin{proof}
First, using the argument of~\cite{brendle-PIC}*{Lemma A.6}, the property $T \in C_{\textup{PIC}}$ implies that
\begin{equation}\label{eqn:A-lambda-H}
    Z_{\lambda}(T) \geq - \frac{1-\lambda^2}{n-4} \sum_{\alpha=5}^n (T_{1 \alpha 1 \alpha} + T_{2 \alpha 2 \alpha})
\end{equation}
for every orthonormal four-frame $\{ e_1, e_2, e_3, e_4 \}$ and every $\lambda \in [0,1]$.
On the other hand, using the property $T \in C_{\textup{PIC}}$ with both orientations of the frame, we find
\[
T_{1313} + T_{1414} + T_{2323} + T_{2424} \geq 2 \, |T_{1234}|.
\]
We use this inequality and rearrange terms inside $Z_{\lambda}(T)$ to obtain
\begin{align*}
    & Z_{\lambda}(T) - \tfrac{1}{2} (1-\lambda^2) (T_{1313} + T_{2323} - T_{1414} - T_{2424}) \\
    &= \tfrac{1}{2} (1+\lambda^2) ( T_{1313} + T_{1414} + T_{2323}+ T_{2424}) - 2 \lambda T_{1234} \\
    &\geq (1+\lambda^2) \, |T_{1234}| - 2 \lambda \, T_{1234}.
\end{align*}
In particular, this term is at least $\frac{1}{2} (1-\lambda)^2 \, |T_{1234}|$, so it is non-negative.
Therefore,
\begin{equation}\label{eqn:Y-lambda(T)}
Z_{\lambda}(T) \geq \tfrac{1}{2} (1-\lambda^2) (T_{1313} + T_{2323} - T_{1414} - T_{2424}) .
\end{equation}
We now combine the inequalities~\eqref{eqn:A-lambda-H} and~\eqref{eqn:Y-lambda(T)} with weights $\frac{n-4}{n-2}$ and $\frac{2}{n-2}$ to obtain
\begin{equation}\label{eqn:A-lambda(T)-first-step}
Z_{\lambda}(T) \geq \frac{1-\lambda^2}{n-2} \Bigl[ (T_{1313} + T_{2323} - T_{1414} - T_{2424}) -  \sum_{\alpha=5}^n (T_{1 \alpha 1 \alpha} + T_{2 \alpha 2 \alpha}) \Bigr].
\end{equation}
Finally, using the definition of $\textup{Ric}(T)$, we arrive at the identity
\begin{align*}
& T_{1313} + T_{2323} - T_{1414} - T_{2424} -\sum_{\alpha=5}^n (T_{1 \alpha 1 \alpha} + T_{2 \alpha 2 \alpha}) \\
&= 2 \, (T_{1212} + T_{1313} + T_{2323}) - \textup{Ric}(T)_{11} - \textup{Ric}(T)_{22}.
\end{align*}
The desired lower bound for $Z_{\lambda}(T)$ follows from substituting this expression into~\eqref{eqn:A-lambda(T)-first-step}.
\end{proof}

We combine these observations to connect the two cones at their endpoints.

\begin{proposition}\label{prop:joining-conditions}
    For every $n \geq 5$, the cones $C_1(b)$ and $C_2(b)$ satisfy $C_1(b_1(n)) = C_2(b_2(n))$.
\end{proposition}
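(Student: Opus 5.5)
Since $C_2(b_2) \subseteq C_1(b_1)$ by definition, it suffices to show $\ell_{a_2,b_2}^{-1}(C_1(b_1)) \subseteq \cE_2(b_2)$. Concretely, we take $S$ with $T := \ell_{a_1,b_1}^{-1}\ell_{a_2,b_2}(S) \in \cE_1(b_1)$, and check $\tilde Z_\lambda(S;F) \geq 0$ for every orthonormal four-frame $F$ and every $\lambda \in [0,1]$. The plan is to rewrite $\tilde Z_\lambda(S)$ in terms of $T$ using \eqref{eqn:S-T-p} and \eqref{eqn:Ric(S)-expression}. Then we bound the $Z_\lambda(T)$ part from below by Lemma~\ref{lemma:A-lambda-lower-bound}, and control the remainder by Proposition~\ref{prop:joining-lower-bound}.

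\textbf{Step 1: rewriting $\tilde Z_\lambda(S)$ in terms of $T$.} From $S = T + p_2\,\mathrm{Ric}(T)\owedge\mathrm{id} + \frac1n(q_2-p_2)\tau\,\mathrm{id}\owedge\mathrm{id}$, with $\tau = \mathrm{scal}(T)$, we get
\[
Z_\lambda(S) = Z_\lambda(T) + p_2\bigl[(1+\lambda^2)(\mathrm{Ric}(T)_{11}+\mathrm{Ric}(T)_{22}) + 2\,\mathrm{Ric}(T)_{33} + 2\lambda^2\mathrm{Ric}(T)_{44}\bigr] + \tfrac{4}{n}(q_2-p_2)(1+\lambda^2)\tau .
\]
The correction term equals $(1-\lambda^2)\sqrt{2b_2}\,[\mathrm{Ric}(S)_{11}+\mathrm{Ric}(S)_{22}+2B_nb_2\,\mathrm{scal}(S)]$. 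By \eqref{eqn:Ric(S)-expression} this becomes $(1-\lambda^2)\bigl[u_2(\mathrm{Ric}(T)_{11}+\mathrm{Ric}(T)_{22}) + c_n\tau\bigr]$, where $u_2,r_2$ are as in \eqref{eqn:endpoint-parameters} and $c_n$ is an explicit constant built from $p_2,q_2,r_2$ and $\sqrt{2b_2}$.

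\textbf{Step 2: splitting by $\lambda$.} We insert Lemma~\ref{lemma:A-lambda-lower-bound} for $Z_\lambda(T)$, but only with weight $(1-\lambda^2)$. The coefficient of $(1-\lambda^2)$ is then exactly $F_n(T)$ from Proposition~\ref{prop:joining-lower-bound}, plus scalar terms: the $\frac{2}{n-2}(T_{1212}+T_{1313}+T_{2323})$ comes from the lemma, the $(p_2+u_2-\frac1{n-2})$ coefficient from combining the lemma with the correction term, and the $2p_2\,\mathrm{Ric}(T)_{33}$ from the $p_2$ term. The remaining $\lambda^2$ part consists of $p_2$ and $q_2$ Ricci and scalar terms. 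We organize
\[
\tilde Z_\lambda(S) \geq (1-\lambda^2)\bigl[F_n(T) + \alpha_n\tau\bigr] + \lambda^2\bigl[Z_1(S)\text{-type quantity}\bigr].
\]
Alternatively, we write $\tilde Z_\lambda(S) = (1-\lambda^2)\,\Phi_0 + \lambda^2\,\Phi_1$ after a convex-combination regrouping. Here $\Phi_1 \geq Z_1(S) \geq 0$ by Proposition~\ref{prop:why-the-long-face}, since $S$ is strictly PIC. For $\Phi_0$, we apply Proposition~\ref{prop:joining-lower-bound}: $F_n(T) \geq -d_n\tau$. The weak two- and three-positivity of $\mathrm{Ric}(T)$ (from \eqref{eqn:ric-11-first} and Lemma~\ref{lemma:consequences-of-PIC}) disposes of leftover nonnegative terms such as $\lambda^2\,\mathrm{Ric}(T)_{44}$, up to the small negative $\frac{4}{5n}b_1\tau$ slack, which must be absorbed.

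\textbf{Step 3: the numerical inequality.} Positivity then reduces to the dimension-dependent scalar inequality
\[
\tfrac{4}{n}(q_2-p_2) + c_n - d_n - (\text{small } b_1 \text{ losses}) \geq 0,
\]
checked with the values \eqref{eqn:direct-substitution-n=5,6} and \eqref{eqn:direct-substitution-ngeq7}. For $n\geq 7$ the identity $u_2-p_2=\frac1{n-2}$ from \eqref{eqn:u2-p2} makes the $\mathrm{Ric}_{11}+\mathrm{Ric}_{22}$ coefficient match. For $n \geq 11$ we have $d_n = 0$, and then only positivity of $q_2$ and $r_2$ is needed.

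The main obstacle is the bookkeeping in Step 2: making the linear-in-$\lambda^2$ interpolation exact, since $Z_\lambda$ is quadratic in $\lambda$ with a cross term $-2\lambda T_{1234}$. If a clean split fails, we would use the bound \eqref{eqn:Y-lambda(T)}-style estimate $(1+\lambda^2)|T_{1234}| - 2\lambda T_{1234} \geq 0$ to remove the cross term, as in the proof of Lemma~\ref{lemma:A-lambda-lower-bound}. The delicate numerical verification is at $n=5,6$, where $d_5 = \frac7{16}$ and $d_6 = \frac{149}{648}$ are large and must be beaten by $c_n\tau$ coming from $B_n = 20$.
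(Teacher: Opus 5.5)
Your plan has the same architecture as the paper's proof. You rewrite $\tilde Z_\lambda(S)$ via \eqref{eqn:S-T-p} and \eqref{eqn:Ric(S)-expression}, insert Lemma~\ref{lemma:A-lambda-lower-bound}, observe that the resulting lower bound is affine in $\lambda^2$, and check $\lambda=0$ and $\lambda=1$. Your $\lambda=0$ reduction to $F_n(T)\geq -d_n\,\textup{scal}(T)$ plus a scalar inequality is the paper's $\Gamma_n>0$.

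The genuine error is at $\lambda=1$. After Lemma~\ref{lemma:A-lambda-lower-bound} is inserted, its right-hand side vanishes at $\lambda=1$, so the endpoint value of your lower bound is
\[
\Phi_1 = Z_1(S)-Z_1(T) = 2p_2\sum_{i=1}^4\textup{Ric}(T)_{ii}+\tfrac{8}{n}(q_2-p_2)\,\textup{scal}(T).
\]
Since $Z_1(T)\geq 0$, this gives $\Phi_1\leq Z_1(S)$, the opposite of what you claim. So the fact that $S$ is strictly PIC does not yield $\Phi_1\geq 0$. What you need is the computation behind that fact, namely \eqref{eqn:A1(SF)-inequality} together with \eqref{eqn:v-intervals}. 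These give $\sum_{i\leq 4}\textup{Ric}(T)_{ii}\geq\frac12\textup{scal}(T)$ for $n=5$ and $\geq 0$ for $n\geq 6$. Hence $\Phi_1\geq\frac15(8q_2-3p_2)\,\textup{scal}(T)$, respectively $\Phi_1\geq\frac8n(q_2-p_2)\,\textup{scal}(T)$, and both are positive by Lemma~\ref{lemma:properties-II}~$(iii)$.

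Discard the proposed exact identity $\tilde Z_\lambda(S)=(1-\lambda^2)\Phi_0+\lambda^2\Phi_1$; it cannot hold because of the $-2\lambda S_{1234}$ term. The cross-term ``obstacle'' you flag is not an obstacle, since Lemma~\ref{lemma:A-lambda-lower-bound} has already removed it.

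There are also three smaller corrections.

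\textbf{Grouping of $\lambda^2\,\textup{Ric}(T)_{44}$.} This term is not nonnegative on its own, since a single diagonal entry of $\textup{Ric}(T)$ need not be. The correct bookkeeping is
\[
p_2\bigl[(1+\lambda^2)(\textup{Ric}(T)_{11}+\textup{Ric}(T)_{22})+2\,\textup{Ric}(T)_{33}+2\lambda^2\,\textup{Ric}(T)_{44}\bigr]=2p_2\lambda^2\sum_{i=1}^4\textup{Ric}(T)_{ii}+(1-\lambda^2)\,p_2\bigl(\textup{Ric}(T)_{11}+\textup{Ric}(T)_{22}+2\,\textup{Ric}(T)_{33}\bigr).
\]
This produces $F_n(T)$ at $\lambda=0$ and the four-trace at $\lambda=1$.

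\textbf{No $b_1$ losses.} With this grouping the condition \eqref{eqn:ric-11-first} is not needed here, and no $\frac{4}{5n}b_1$ slack appears. The scalar inequality is exactly
\[
\Gamma_n=-d_n+\tfrac4n(q_2-p_2)+\tfrac{2\sqrt{2b_2}}{n}\bigl(2(n-1)q_2-(n-2)p_2\bigr)+2r_2\bigl(1+2(n-1)q_2\bigr)>0.
\]

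\textbf{The numerical check must be done.} This inequality is the real content of the $\lambda=0$ case and has to be evaluated. For instance $\Gamma_5=\frac{983}{25200}$, where $2r_2(1+8q_2)=\frac49$ only barely beats $d_5=\frac{7}{16}$. For $n\geq 11$ it requires $q_2>p_2$, not merely $q_2,r_2>0$.
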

\begin{proof}
Applying the arguments within this section, we are reduced to showing that $\tilde{Z}_{\lambda}(S) \geq 0$ whenever $T = \ell_{a_1, b_1}^{-1} \ell_{a_2, b_2}(S) \in \cE_1(b_1)$.
Using the relations~\eqref{eqn:Ric(S)-expression} and~\eqref{eqn:A-lambda(T)-brendle-inequality}, we compute $\tilde{Z}_{\lambda}(S)$ as
\allowdisplaybreaks{
\begin{align*}
    \tilde{Z}_{\lambda}(S) &= Z_{\lambda}(T) + p_2 \, [ (1+\lambda^2) ( \textup{Ric}(T)_{11} + \textup{Ric}(T)_{22}) + 2 \, \textup{Ric}(T)_{33} + 2 \lambda^2 \, \textup{Ric}(T)_{44}] \\
    & \quad + \tfrac{4}{n} (q_2 - p_2)(1+\lambda^2) \, \textup{scal}(T) + \textstyle{\sqrt{2b_2}} (1-\lambda^2) ( 1+(n-2) p_2) \, ( \textup{Ric}(T)_{11} + \textup{Ric}(T)_{22}) \\
    & \quad + 2 (1-\lambda^2) \bigl[ \tfrac{1}{n}\textstyle{\sqrt{2b_2}} \, ( 2(n-1) q_2 - (n-2) p_2) + B_n b_2 \textstyle{\sqrt{2b_2}} (1+2(n-1) q_2) \bigr] \, \textup{scal}(T) \\
    &\geq \tfrac{1}{n-2} (1-\lambda^2) \, [ 2 (T_{1212} + T_{1313} + T_{2323}) - \textup{Ric}(T)_{11} - \textup{Ric}(T)_{22}] + 2 p_2 \lambda^2 \sum_{i=1}^4 \textup{Ric}(T)_{ii} \\
    & \quad + (1-\lambda^2) \, \Bigl[ (u_2+p_2) \, ( \textup{Ric}(T)_{11} + \textup{Ric}(T)_{22}) + 2 p_2 \textup{Ric}(T)_{33} \Bigr] + \tfrac{4}{n}(q_2-p_2)(1+\lambda^2) \, \textup{scal}(T)  \\
    & \quad + 2 \, (1-\lambda^2) \bigl[ \tfrac{1}{n} \textstyle{\sqrt{2b_2}} ( 2(n-1) q_2 - (n-2) p_2) + r_2 (1+2(n-1) q_2) \bigr] \, \textup{scal}(T) .
\end{align*}}
In the second step, we substituted $u_2 = \textstyle{\sqrt{2b_2}}( 1 + (n-2) p_2)$ and $r_2 = B_n b_2 \sqrt{2b_2}$ from~\eqref{eqn:endpoint-parameters}.

For a fixed algebraic curvature operator $T \in \cE_1(b_1)$, the right-hand side is an affine function of $\lambda^2$, so we need only prove the non-negativity of the resulting expression for $\lambda = 0$ and $\lambda = 1$.

We first show that this expression is non-negative for $\lambda=1$.
Indeed, notice that the bound~\eqref{eqn:v-intervals} for $v := \frac{1}{\textup{scal}(T)} \textup{tr}_{ \la e_1, e_2, e_3 , e_4 \rg } \textup{Ric}(T)$ implies that
\[
\sum_{i=1}^4 \textup{Ric}(T)_{ii} \geq \frac{1}{2} \textup{scal}(T) \quad \text{for } \; n =5 , \qquad \sum_{i=1}^4 \textup{Ric}(T)_{ii} \geq 0 \quad \text{for } \; n \geq 6.
\]
Using the above expression and the bound from Lemma~\hyperref[lemma:properties-II]{\ref{lemma:properties-II}~$(iii)$}, we obtain 
\begin{align*}
    n&=5: \qquad & \tilde{Z}_1(S) &\geq [p_2 + \tfrac{8}{5} (q_2- p_2) ] \, \textup{scal}(T) = \tfrac{1}{5} ( 8 q_2 - 3p_2) \, \textup{scal}(T) > 0, \\
    n&\geq 6: \qquad  & \tilde{Z}_1(S) &\geq \tfrac{8}{n} (q_2 - p_2) \, \textup{scal}(T) > 0,
\end{align*}
since $q > \frac{3}{8} p$ for $n=5$ and $q>p$ for $n \geq 6$.
Thus, $\textup{scal}(T)^{-1} \tilde{Z}_1(S)>0$ for $n \geq 5$.

Next, we turn to the case $\lambda=0$.
Our previous lower bound for $\tilde{Z}_0(S)$ becomes
\begin{align*}
    \tilde{Z}_0(S) &\geq \tfrac{2}{n-2}(T_{1212} + T_{1313} + T_{2323}) + (u_2+p_2 - \tfrac{1}{n-2}) \, ( \textup{Ric}(T)_{11} + \textup{Ric}(T)_{22}) +  2 p_2 \textup{Ric}(T)_{33} \\
    & \quad + \tfrac{2}{n} \bigl[ 2 (q_2 - p_2) + \textstyle{\sqrt{2b_2}} \, (2(n-1) q_2 - (n-2) p_2) + n r_2( 1+ 2(n-1) q_2) ] \, \textup{scal}(T).
\end{align*}
By Proposition~\ref{prop:joining-lower-bound}, the first line is at least $- d_n \, \textup{scal}(T)$, where the dimensional constant $d_n$ is defined in~\eqref{eqn:d5-d6-dn}.
Then, the above inequality implies that
\[
\textup{scal}(T)^{-1} \tilde{Z}_0(S) \geq \Gamma_n, 
\]
where $\Gamma_n$ is the dimensional constant
\begin{align*}
\Gamma_n := - d_n + \tfrac{4}{n} (q_2 - p_2) + \tfrac{2 \sqrt{2b_2}}{n} (2(n-1) q_2 - (n-2) p_2) + 2r_2 (1 + 2(n-1) q_2).
\end{align*}
We compute this quantity using~\eqref{eqn:direct-substitution-n=5,6}, ~\eqref{eqn:direct-substitution-ngeq7}, and~\eqref{eqn:d5-d6-dn}.
First, for $5 \leq n \leq 10$, we find
\begin{align*}
    \Gamma_5 &= \tfrac{983}{25200}, \qquad \Gamma_6 = \tfrac{27}{1280}, \qquad \Gamma_7 = \tfrac{4123}{516450}, \qquad \Gamma_8 = \tfrac{2095}{149616}, \qquad \Gamma_9 = \tfrac{ 27823}{1768410}, \qquad \Gamma_{10} = \tfrac{612103}{38270400}.
\end{align*}
Then, for $n \geq 11$, the expression~\eqref{eqn:direct-substitution-ngeq7} gives $d_{11} = 0$.
Since $q_2> p_2$, we also have $2(n-1) q_2 > (n-2) p_2$, so all the summands of $\Gamma_n$ are positive.
We conclude that $\tilde{Z}_0(S) \geq \Gamma_n \,  \textup{scal}(T) > 0$ for every $n \geq 5$.

To summarize, we have proved that $\tilde{Z}_{\lambda}(S) \geq 0$ for all $\lambda \in [0,1]$, and hence $S \in \cE_2(b_2)$ and $\ell_{a_2, b_2}(S) \in C_2(b_2)$.
This shows that $C_1(b_1) = C_2(b_2)$ and completes the proof of our assertion.
\end{proof}

We now verify that the constructions of Sections~\ref{section:first-family} and~\ref{section:second-family} produce the pinching cones of Theorem~\ref{thm:technical-construction}.
\begin{proof}[Proof of Theorem~\ref{thm:technical-construction}]
Since $C_2(b_2) = C_1(b_1)$, we let $B = b_1 + b_2$ and define
\[
\hat{C}(b) := C_1(b) \quad \text{for } \; b \in (0,b_1], \qquad \hat{C}(b) := C_2(B-b) \quad \text{for } \; b \in (b_1,B].
\]
By Proposition~\ref{prop:joining-conditions}, we have $C_1(b_1) = C_2(b_2)$, so this joining produces an admissible family $\hat{C}(b)$ that depends continuously on $b \in (0,B)$.
By Propositions~\ref{prop:transversally-invariant-family-1} and~\ref{prop:trans-inv-fam-2}, the cone $\hat{C}(b)$ is transversally invariant under the Hamilton ODE for every $b \in (0,B)$.
Definition~\ref{def:first-cone-family} implies property $(i)$, and Definition~\ref{def:a=b} supplies property $(ii)$.
This completes the proof of Theorem~\ref{thm:technical-construction}.
\end{proof}

Finally, Theorem~\ref{thm:technical-construction} can be combined with the results in~\cite{brendle-PIC} to deduce Theorem~\ref{thm:brendle-pinching}.

\bibliography{ref}

@article{brendle-duke,
    author = {Brendle, S.},
    title = {{A general convergence result for the Ricci flow in higher dimensions}},
    journal = {{Duke Math. J.}},
    year = {2008},
    volume = {145},
    number = {3},
    pages = {585-601},
}

@article{brendle-PIC,
    author = {Brendle, S.},
    title = {{Ricci flow with surgery on manifolds with positive isotropic curvature}},
    journal = {Ann. of Math. (2)},
    volume = {190},
    number = {2},
    pages = {465-559},
    year = {2019},
}

@article{brendle-schoen,
    author = {Brendle, S. and Schoen, R.},
    title = {{Manifolds with $\frac{1}{4}$-pinched curvature are space forms}},
    journal = {J. Amer. Math. Soc.},
    year = {2007},
    volume = {22},
    pages = {287-307},
}

@article{bohm-wilking,
    author = {B\"ohm, C. and Wilking, B.},
    title = {{Manifolds with positive curvature operators are space forms}},
    journal = {Ann. of Math. (2)},
    year = {2008},
    volume = {167},
    number = {3},
    pages = {1079-1097},
}

@article{pic-dimension-nine,
    author = {Chen, Z.},
    title = {{Manifolds with positive isotropic curvature of dimension at least nine}},
    journal = {arXiv:2410.21078},
    year = {2024}, 
}

@article{fraser,
    author = {Fraser, A.},
    title = {{Fundamental groups of manifolds with positive isotropic curvature}},
    journal = {Ann. of Math. (2)},
    volume = {158},
    year = {2003},
    pages = {345-354},
}

@article{bamler-cabezas-rivas-wilking,
    author = {Bamler, R. and Cabezas-Rivas, E. and Wilking, B.},
    title = {{The Ricci flow under almost non-negative curvature conditions}},
    journal = {Invent. Math.},
    volume = {217},
    pages = {95-126},
    year = {2019},
    url = {https://doi.org/10.1007/s00222-019-00864-7},
}

@article{chen-tang-zhu,
    author = {Chen, B.-L. and Tang, S.-H. and Zhu, X.-P.},
    title = {{Complete classification of compact four-manifolds with positive isotropic curvature}},
    journal = {J. Diff. Geom.},
    year = {2012},
    volume = {91},
    pages = {41-80},
}

@article{hamilton-four-manifolds,
    author = {Hamilton, R.},
    title = {{Four-manifolds with positive isotropic curvature}},
    journal = {Comm. Anal. Geom.},
    year = {1997},
    volume = {5},
    pages = {1-92},
}

@article{chow-wang-PIC,
    title = {{Minimal Two-Spheres and Manifolds with Positive Isotropic Curvature}},
    author = {Chow, T.K.A. and Wang, Y.},
    journal = {arXiv:2609.06910},
    year = {2026},
}

@article{chen-shrinking,
    author = {Chen, Z.},
    title = {{On Shrinking Ricci solitons with positive isotropic curvature in higher dimensions}},
    journal = {arXiv:2509.13183},
    year = {2025},
}

@article{cho-li,
    author = {Cho, J.H. and Li, Y.},
    title = {{Ancient solutions to the Ricci flow with isotropic curvature conditions}},
    journal = {Math. Ann.},
    year = {2023},
    volume = {387},
    number = {1},
    pages = {1009-1041},
}

@article{li-ni-wang,
author = {Li, X. and Ni, L. and Wang, K.},
title = {{Four-dimensional gradient shrinking solitons with positive isotropic curvature}},
journal = {Int. Math. Res. Not.},
    year = {2018},
    number = {3},
    pages = {949-959},
}

@article{brendle-naff,
    author = {Brendle, S. and Naff, K.},
    title = {{Rotational symmetry of ancient solutions to the Ricci flow in higher dimensions}},
    journal = {Geom. Topol.},
    year = {2023},
    volume = {27},
    pages = {153-226},
}

@article{naff,
    author = {Naff, K.},
    title = {{Shrinking Ricci solitons with positive isotropic curvature}},
    journal = {arXiv:1905.10305},
    year = {2019},
}

@article{huang-classification-2023,
author = {Huang, H.},
title = {{Classification of compact manifolds with positive isotropic curvature}},
journal = {arXiv:2305.18154},
year = {2023},
}

@article{chen-zhu-jdg,
author = {Chen, B.L. and Zhu, X.P.},
title = {{Ricci Flow with Surgery on Four-Manifolds with Positive Isotropic Curvature}},
volume = {74},
number = {2},
pages = {177-264},
journal = {J. Diff. Geom.},
year = {2006},
}

@article{micallef-moore,
    author = {Micallef, M.J. and Moore, J.D.},
    title = {{Minimal two-spheres and the topology of manifolds with positive curvature on totally isotropic two-planes}},
    journal = {Ann. of Math. (2)},
    volume = {127},
    pages = {199-227},
    year = {1988},
}

@incollection{gromov,
author = {Gromov, M.},
title = {{Positive curvature, macroscopic dimension, spectral gaps and higher signatures}},
booktitle = {Functional Analysis on the Eve of the 21st Century, Vol. II},
series = {Progr. Math.},
volume = {132},
publisher = {Birkh\"auser},
address = {Boston, MA},
year = {1996},
pages = {1--213},
}

@incollection{hamilton,
author = {Hamilton, R.S.},
title = {{The formation of singularities in the Ricci flow}},
booktitle = {{Surveys in Differential Geometry, Vol. II}},
publisher = {International Press},
address = {Cambridge, MA},
year = {1995},
pages = {7--136},
doi = {10.4310/SDG.1993.v2.n1.a2},
}

@article{ivey,
author = {Ivey, T. A.},
title = {{Ricci solitons on compact three-manifolds}},
journal = {{Diff. Geom. Appl.}},
volume = {3},
number = {4},
year = {1993},
pages = {301--307},
doi = {10.1016/0926-2245(93)90008-O},
}

@article{perelman-entropy,
author = {Perelman, G.},
title = {{The entropy formula for the Ricci flow and its geometric applications}},
year = {2002},
journal = {arXiv:math$\slash$0211159},
}

@article{perelman-surgery,
author = {Perelman, G.},
title = {{Ricci flow with surgery on three-manifolds}},
year = {2003},
journal = {arXiv:math$\slash$0303109},
}

@article{brendle-daskalopoulos-naff-sesum,
    author = {Brendle, S. and Naff, K. and Daskalopoulos, P. and Sesum, N.},
    title = {{Uniqueness of compact ancient solutions to the higher-dimensional Ricci flow}},
    journal = {J. reine angew. Math.},
    year = {2023},
    volume = {795},
    pages = {85-138},
}

\end{document}